\newtheorem{theorem}{Theorem}[section]
\newtheorem{lemma}[theorem]{Lemma}
\newtheorem{proposition}[theorem]{Proposition}
\newtheorem{corollary}[theorem]{Corollary}
\newtheorem{conjecture}[theorem]{Conjecture}
\newtheorem*{main theorem*}{Main Theorem}
\theoremstyle{remark}
\newtheorem{remark}[theorem]{Remark}
\newtheorem{definition}[theorem]{Definition}
\numberwithin{equation}{section}
\newcommand{\op}{\operatorname}
\newcommand{\M}{\overline{\mathcal{M}}}
\newcommand{\K}{\mathcal{K}}
\newcommand{\so}{\mathscr{O}}
\newcommand{\vir}{{\rm vir}}
\newcommand{\ev}{{\rm ev}}
\newcommand{\Res}{{\rm Res}}
\newcommand{\td}{{\rm td}}
\newcommand{\ch}{{\rm ch}}
\newcommand{\Tr}{{\rm Tr}}
\newcommand{\fake}{{\rm fake}}
\newcommand{\KK}{{K}}
\newcommand{\Coeff}{{\rm Coeff}}
\newcommand{\leg}{{\rm leg}}
\newcommand{\arm}{{\rm arm}}
\newcommand{\tail}{{\rm tail}}
\def\O{\mathcal{O}}
\newcommand{\GW}{{\mathrm{GW}}}
\newcommand{\GV}{{\mathrm{GV}}}
\newcommand{\e}{\epsilon}
\newcommand{\ind}{{\mathrm{ind}}}
\title[QK $=$ GV on CY3 II]{Quantum $K$-invariants and Gopakumar--Vafa invariants II. \\ Calabi-Yau threefolds at genus zero}
	\author{Y.-C.~Chou}
	\email{bensonchou72@gmail.com, bensonchou@gate.sinica.edu.tw}
	\author{Y.-P.~Lee}
	\email{yplee@math.utah.edu, ypleemath@gate.sinica.edu.tw}
\address{Institute of Mathematics, Academia Sinica, Taipei 106319, Taiwan, and
Department of Mathematics, University of Utah, 	Salt Lake City, Utah 84112-0090, U.S.A.}
\begin{document}

\maketitle

\begin{abstract}
This is the second part of our ongoing project on the relations between \emph{Gopakumar--Vafa BPS invariants} (GV) and \emph{quantum $K$-theory} (QK) on the Calabi--Yau threefolds (CY3). 
We show that on CY3 a genus zero quantum $K$-invariant can be written as a linear combination of a finite number of Gopakumar--Vafa invariants with coefficients from an explicit ``multiple cover formula''. Conversely, GV can be determined by QK in a similar manner. The technical heart is a proof of a remarkable conjecture by Hans Jockers and Peter Mayr.
%Our proof follows a different approach from our previous proof in the case of the quintic threefold.

This result is consistent with the ``virtual Clemens conjecture'' for the Calabi--Yau threefolds. A heuristic derivation of the relation between QK and GV via the virtual Clemens conjecture and the multiple cover formula is also given.
\end{abstract}

\tableofcontents

\setcounter{section}{-1}

\section{Introduction}

\subsection{Main result}
This paper continues our study of relations between \emph{Gopakumar--Vafa invariants} \cite{Gopakumar_Vafa_1998} and \emph{quantum $K$-theory} \cite{Givental_2000, Lee_2004} on Calabi--Yau threefolds. 
The main result of this paper is the following theorem.

\begin{main theorem*}
A genus zero quantum $K$-invariant (with ``descendants'') can be written as an linear combination of a finite number of Gopakumar--Vafa invariants with coefficients from an explicit multiple cover formula. Conversely, all genus zero Gopakumar--Vafa invariants can be determined by a finite number of quantum $K$-invariants in a similar manner. Furthermore, the linear transformations between QK and GV preserve integrality.
\end{main theorem*}

The multiple cover formula is stated in Proposition~\ref{prop_multicover_t}. The explicit formulation can be found in Theorem~\ref{conjecture_npt} together with the proof of Proposition~\ref{p:1.5}. The quantum $K$-invariants of any genus are intrisically integral invariants, while the integrality of the GV in our definition follows from the result in \cite{Ionel_Parker_2018}.

This theorem generalizes the results in \cite{Chou_Lee_2022_QKGVI} in two directions. First, the equivalence between genus zero quantum $K$-invariants and Gopakumar--Vafa invariants now holds for all Calabi--Yau threefolds. Second, the relation extends to $n$-pointed quantum $K$-invariants thanks to the conjectural formula of H.~Jockers and P.~Mayr. The proof also follows a different approach, although both ultimately come from the virtual Hirzebruch--Riemann--Roch theorem for stacks \cite{Tonita_VKHRR}.

Our techniques generalizes to higher dimensions. The Gopakumar-Vafa type invariants at genus zero have been defined to all semi-positive (including Calabi-Yau and Fano) varieties with dimensions greater than or equal to $3$. For $\gamma_1,\dots, \gamma_l \in H^*(X,\mathbb{Z})$, the numbers $\GV_{0,\beta}(\gamma_1,\dots,\gamma_l)$ are defined by
\[
    \sum_{\beta \neq 0} \GW_{0,\beta}(\gamma_1,\dots,\gamma_l) q^{\beta} =: \sum_{\beta\neq 0}\GV_{0,\beta}(\gamma_1,\dots,\gamma_l) \sum_{d=1}^{\infty} \frac{1}{d^{3-l}} q^{d\beta}
\]
and conjectured to be integers in \cite{Klemm_Pandharipande_CY4_2008}. The integrality has been proved by E.N.\ Ionel and T.H.\ Parker in \cite{Ionel_Parker_2018} using symplectic geometry. As an application of techniques developed in this series of papers an \emph{alternative proof of integrality} is given in \cite{Chou_2023}, by relating QK and GV.

\subsection{Enumerative invariants on Calabi--Yau threefolds}

It is our belief that there should be only one set of numerical (virtual) enumerative invariants on Calabi--Yau threefolds, and Gopakumar--Vafa invariants should serve as the basic invariants to which all others can be reduced. While very explicit (conjectural, and partially verified) relations between Gromov--Witten, Donaldson--Thomas, Pandharipande--Thomas and Gopakumar--Vafa invariants have been available from the beginning, the relation with the quantum $K$-theory was not as clear at first. The situation changed when A.~Givental and his collaborators made a breakthrough in relating quantum $K$-theory with the Gromov--Witten theory \cite{Givental_Tonita_2011, Givental_PEK}. However, that relation is very general and is applicable to any smooth projective varieties (or DM stacks). Hence it is also quite complicated and its implications in this special case of Calabi--Yau threefolds are not as clear. This is the goal we set for ourselves in this series of papers.

As in Part I \cite{Chou_Lee_2022_QKGVI}, we use the \textit{ad hoc} definition of Gopakumar--Vafa invariants in terms of Gromov--Witten invariants via the formula of by R.~Gopakumar and C.~Vafa \cite{Gopakumar_Vafa_1998}.
\begin{equation} \label{e:0.1}
\begin{split}
&\sum _{g=0}^{\infty }~\sum _{\beta \in H_{2}(M,\mathbb {Z} )}{\GW}_{g,\beta} q^{\beta }\lambda ^{2g-2} \\
= & \sum _{g=0}^{\infty }~\sum _{k=1}^{\infty }~\sum _{\beta \in H_{2}(M,\mathbb {Z} )}{\GV}_{g,\beta}{\frac {1}{k}}\left(2\sin \left({\frac {k\lambda }{2}}\right)\right)^{2g-2}q^{k\beta } .
\end{split}
\end{equation}
Therefore, relating QK and GV is in some sense a question of relating QK and GW.
In this paper, we completely solve this problem and relate theses two sets of invariants by an (invertible) integral linear transformation. This in particular implies the ad hoc definition of GV in \eqref{e:0.1} gives integral invariants in genus zero, as the quantum $K$-invariants are intrinsically \emph{integral} by definition.

Perhaps it is worth pointing out that these relations do not render quantum $K$-invariants, or GW, DT, PT, obsolete. Quantum $K$-theory has \emph{more direct} and \emph{different}
%become a more visible topic in recent years through its 
connections with finite-difference integrable systems, representation theory, and theoretical physics. Its relation to $3$ dimensional topological field theory can be found in the pioneering works of N.~Nekrasov, H.~Jockers, P.~Mayr etc.. See \cite{Jockers_Mayr_2019, Jockers_Mayr_2020} and references therein. For its connection to representation theory can be found in \cite{Okounkov_lectures} and references therein. Quantum $K$-theory of flag varieties is intimately related to finite difference integrable systems and quantum groups \cite{Givental_Lee_2003}. It has also inspired much progress in geometric combinatorics through works like \cite{Buch_2011, Buch_2013, Buch_2020} of A.~Buch, P.~Chaput, C.~Li, L.~Mihalcea, N.~Perrin and many others. These works are developed from the perspectives unique to the quantum $K$-theory.

We hope to extend these results to higher genus in future works.

\subsection{Contents of the paper}
In Section 1, a conjecture by Jockers and Mayr \cite{Jockers_Mayr_2019} is stated as Theorem~\ref{conjecture_npt} in essentially identical form. (Our degree zero invariants are different from theirs.) This theorem gives a precise (finite) integral linear transformation from Gopakumar--Vafa invariants to an ``essential subset'' of genus zero quantum $K$-invariants. In Proposition~\ref{p:1.5} we show that other genus zero quantum $K$-invariants can be commputed from these essential quantum $K$-invariants. Hence, the Main Theorem is reduced to Theorem~\ref{conjecture_npt}.

Sections~\ref{s:2} and \ref{s:3} are mainly devoted to the proof of Theorem~\ref{conjecture_npt}. Our approach is to show that the right hand side of the formula $\tilde{J}$ in Theorem~\ref{conjecture_npt} belongs to a special class of (generating) functions to which $J^K$ belongs. In the language of A.~Givental, this is called ``lying on the overruled Lagrangian cone." In Givental's framework, two functions both lying on the Lagrangian cone are related by a (generalized) mirror transformation, also known as Birkhoff factorization. 

In the case of quantum $K$-theory, Givental developed a characterization of all functions lying on the overruled Lagrangian cone, called the \emph{adelic characterization}.
In Section~\ref{s:2}, some necessary ingredients on A.~Givental's adelic characterization are briefly summarized. Further details can be found in Part I \cite{Chou_Lee_2022_QKGVI} or \cite{Givental_Tonita_2011}.

The adelic characterization is employed to show that $\tilde{J}$ lies on the Lagrangian cone in Section~\ref{s:3}. Furthermore, by the uniqueness theorem, the mirror transformation is shown to be trivial, proving Theorem~\ref{conjecture_npt}.

In the last section, we study the multiple cover formula on the local model. Assuming the "Virtual Clemen Conjecture" on Calabi-Yau threefold $X$, the Jockers--Mayr formula is derived from the ``first principles''. Of course, our proof as outlined above is completely different and admittedly much more involved, as the Virtual Clemens Conjecture remains a conjecture.

Indeed, this was how we first reached the formula \eqref{e:jtilde}, as at that time we were not able to understand the work of Jockers and Mayr \cite{Jockers_Mayr_2019}, written in a language we are not used to. Thanks to their explanations, we now understand that their formula, dating back to 2019, is essentially the same.

\subsection*{Acknowledgements}
We wish to thank A.~Givental, H.~Jokers, P.~Mayr, R.~Pandharipande, E.~Scheidegger, H.-H.~Tseng and Y.~Wen for their interest and discussions about this work. We are grateful to Jokers and Mayr for explaining their conjecture to us and verifying an earlier and more restrictive version of our formulation in \cite{Chou_Lee_2022_Bumsig} is consistent with their formulation.
This research is partially supported by the National Science and Technology Council, the Simons Foundation, Academia Sinica and University of Utah.

\section{GW, GV, QK and the JM Conjecture} \label{s:1}
%\section{Gopakumar--Vafa invariants and Gromov--Witten invariants}

\subsection{GW, GV, and QK}
Let $X$ be a smooth complex projective variety. 
(Cohomological) Gromov--Witten invariants of $X$ are defined to be
\[
\langle \tau_{d_1}(\phi_1)\dots \tau_{d_n}(\phi_n) \rangle^{X, {H}}_{g,n,\beta} := \int_{[\M_{g,n}(X,\beta)]^{\vir}}   \cup_{i=1}^n \ev_i^*(\phi_i) \ c_1(L_i)^{d_i} \in \mathbb{Q},
\]
where $[\M_{g,n}(X,\beta)]^{\vir}$ are the virtual fundamental classes, $\phi_1,\dots,\phi_n \in H(X)$, $d_1,\dots,d_n \in \mathbb{Z}_{\geq 0}$, $\ev_i$ and $L_i$ are the evaluation map and cotangent line bundle at the $i$-th marked point respectively. 

When $X$ is a Calabi-Yau threefold, $\M_{g,0}(X,\beta)$ has virtual dimension 0. We define 
\[
 {\GW}_{g, \beta} := \int_{[\M_{g,0}(X,\beta)]^{\vir} } 1 .
\]
By string, dilaton, and divisor equation, all Gromov--Witten invariant can be recovered from 0-pointed one.

In theoretical physics,     R.~Gopakumar and C.~Vafa in \cite{Gopakumar_Vafa_1998} introduced new \emph{integral} topological invariants on \emph{Calabi--Yau threefolds} (CY3) $X$, called \emph{Gopakumar--Vafa invariants}. These invariants represent the counts of ``numbers of BPS states'' on $X$. There have been various attempts at defining Gopakumar--Vafa invariants mathematically. We refer the readers to \cite{Maulik_Toda_2018} and references therein. 
%However the definitions might be, they are expected to be compatible with the dilaton and divisor equations. The counting of the BPS states is again reduced to similarly defined ${\GV}_{g, \beta}$.

A remarkable relation conjectured in \cite{Gopakumar_Vafa_1998} between GV and GW can be expressed in terms of generating functions as in \eqref{e:0.1}.
% \begin{equation} \label{e:1.1}
% \begin{split}
%  &\sum _{g=0}^{\infty }~\sum _{\beta \in H_{2}(M,\mathbb {Z} )}{\GW}_{g,\beta} q^{\beta }\lambda ^{2g-2} \\
%  = & \sum _{g=0}^{\infty }~\sum _{k=1}^{\infty }~\sum _{\beta \in H_{2}(M,\mathbb {Z} )}{\GV}_{g,\beta}{\frac {1}{k}}\left(2\sin \left({\frac {k\lambda }{2}}\right)\right)^{2g-2}q^{k\beta } ,
% \end{split}
% \end{equation}
In genus 0, the Gopakumar--Vafa relation can be written as
\begin{equation} \label{e:1.2}
\begin{split}
    {\GW}_{g=0, \beta} =: {\GW}_{\beta} & = \sum_{k=1}^{\infty} \frac{1}{k^3} {\GV}_{\beta/k}, \\ 
    {\GV}_{g=0, \beta} =:     {\GV}_{\beta} & := \sum_{k=1}^{\infty} \frac{\mu(k)}{k^3} {\GW}_{\beta/k}, 
\end{split}
\end{equation}
where by definition
\[
\GW_{\beta/k} = \GV_{\beta/k}=0, \quad \text{if } \beta/k \notin H_2(X,\mathbb{Z}).
\]
We have used the M\"obius inversion and $\mu(e)$ is the M\"obius function. For our purpose, we will \emph{use \eqref{e:1.2} as the definition of ${\GV}_{\beta}$}.

\emph{Quantum $K$-invariants} are also \emph{integral} invariants \cite{Givental_2000, Lee_2004}.
For any smooth projective variety $X$, quantum $K$-invariants are defined as
\[
\langle \tau_{d_1}(\Phi_1)\dots \tau_{d_n}(\Phi_n) \rangle^{X, {K}}_{g,n,\beta} :=  \chi \Big(  \M_{g,n}(X,\beta) ; \Big(  \otimes_{i=1}^n \ev_i^*(\Phi_i) L_i^{d_i}  \Big) \otimes \O^{\vir}  \Big) \in \mathbb{Z}.
\]
Here $\Phi_1,\dots,\Phi_n \in K^0(X)$, $d_1,\dots,d_n \in \mathbb{Z}$, and $\O^{\vir}$ is the virtual structure sheaf on $\M_{g,n}(X,\beta)$. % \cite{Lee_2004}. 

\subsection{Jockers--Mayr Conjecture} \label{section_conjecture}
H.~Jockers and P.~Mayr in \cite{Jockers_Mayr_2019} wrote down an explicit formula relating an ``essential subset'' of genus zero quantum $K$-invariants and Gopakumar--Vafa invariants. We state a slightly revised version of their formula in Theorem~\ref{conjecture_npt}.

For simplicity, $K(X)$ stands for the toplogical $K$-theory $K^0(X)$ of complex vector bundles.\footnote{With suitable modifications, the results apply to the Grothendieck group of algebraic vector bundles.} A basis of $K(X)$ is chosen as follows.
\[
\{
\Phi_{\alpha}
\}_{\alpha=1}^N =  \bigsqcup_{i=0}^3 \{ \Phi_{ij}\}_{j=1}^{n_i},
\]
where $\{ \ch ( \Phi_{ij} )\}_{j=1}^{n_i}$ forms a basis in $H^{2i}(X)$. In particular, $\{ \Phi_{0j} \}_{j=1}^{n_0} = \{ \Phi_{01} =\mathcal{O} \}$. Let $\{ \Phi^{ij} \}$ be the dual basis of $\{ \Phi_{ij} \}$ with respect to the $K$-theoretic Poincar\'e pairing
\[
(\Phi_a, \Phi_b)^{K} : = \chi (X, \Phi_a \Phi_b) = \int_X \td(TX)\ch(\Phi_a)\ch(\Phi_b).
\]
The following fact will be used later.
\begin{lemma} \label{lemma_degPhi}
For $i = 0$ and $1$,
\[ 
\ch( \Phi^{ij} ) \in H^{2(3-i)}(X).
\]
For $i=2$ and $3$,
\[
\ch ( \Phi^{ij}) \in H^{ \geq 2(3-i)} (X).
\]
\end{lemma}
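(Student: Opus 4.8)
The plan is to exploit, via the Chern character, the compatibility between the codimension (topological) filtration on $K(X)$ and the degree filtration on $H^\bullet(X)$, and to read off the dual basis from the resulting orthogonality structure of the Poincar\'e pairing. Write $\ch=\sum_{k\ge 0}\ch_k$ with $\ch_k\in H^{2k}(X)$, and set $F^pK(X):=\{\Phi:\ch(\Phi)\in H^{\ge 2p}(X)\}$. The hypothesis on the basis then says precisely that $\Phi_{ij}\in F^iK(X)$ with $\ch_i(\Phi_{ij})$ a basis of $H^{2i}(X)$; equivalently $\{\Phi_{kl}\}_{k\ge p}$ is a basis of $F^pK(X)$, as a quick dimension count shows (over $\Q$ the map $\ch$ is an isomorphism onto $H^{\mathrm{even}}(X)$).

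First I record the key vanishing: if $\Phi\in F^pK(X)$ and $\Psi\in F^qK(X)$ with $p+q\ge 4$, then $(\Phi,\Psi)^K=0$. Indeed $\ch(\Phi)\ch(\Psi)\in H^{\ge 2(p+q)}(X)$, and since $\td(TX)$ starts in degree $0$ the integrand $\td(TX)\,\ch(\Phi)\ch(\Psi)$ again lies in $H^{\ge 2(p+q)}(X)$, whose degree-$6$ part vanishes once $p+q\ge 4$; as the pairing is the integral $\int_X$ of this class, it is $0$. Hence $F^{4-p}K(X)\subseteq (F^pK(X))^\perp$.

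Next I upgrade this to an equality $(F^pK(X))^\perp=F^{4-p}K(X)$. The pairing $(\cdot,\cdot)^K$ is perfect over $\Q$ (under $\ch$ it is the form $(\alpha,\beta)\mapsto\int_X\td(TX)\,\alpha\beta$ on $H^{\mathrm{even}}(X)$, nondegenerate because $\td_0=1$), so $\dim (F^pK(X))^\perp=\sum_{i<p}n_i$, whereas Poincar\'e duality $n_i=n_{3-i}$ gives $\dim F^{4-p}K(X)=\sum_{i\ge 4-p}n_i=\sum_{i<p}n_i$; the inclusion above is thus an equality. The defining relations force $\Phi^{ij}\perp\Phi_{kl}$ for every $k\ge i+1$, i.e.\ $\Phi^{ij}\perp F^{i+1}K(X)$, so $\Phi^{ij}\in (F^{i+1}K(X))^\perp=F^{3-i}K(X)$ and therefore $\ch(\Phi^{ij})\in H^{\ge 2(3-i)}(X)$. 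This already gives the two ``$\ge$'' assertions ($i=2,3$), and for $i=0$ it reads $\ch(\Phi^{0j})\in H^{\ge 6}=H^6$, which is the asserted pure statement because $\dim_\C X=3$.

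The only genuinely delicate point, and the single place where the Calabi--Yau hypothesis enters, is to sharpen $\ch(\Phi^{1j})\in H^{\ge 4}=H^4\oplus H^6$ to $\ch(\Phi^{1j})\in H^4$, i.e.\ to annihilate the top component $\ch_3(\Phi^{1j})$. For this I pair against $\mathcal{O}=\Phi_{01}$: orthogonality gives $0=(\Phi^{1j},\mathcal{O})^K=\int_X\td(TX)\,\ch(\Phi^{1j})$. Since $c_1(TX)=0$ forces $\td(TX)=1+\tfrac1{12}c_2(TX)$ to carry no degree-$2$ term, the only surviving contribution is $\int_X\ch_3(\Phi^{1j})$, so $\int_X\ch_3(\Phi^{1j})=0$; and as $\int_X\colon H^6(X)\xrightarrow{\sim}\Q$ is an isomorphism, $\ch_3(\Phi^{1j})=0$. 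I anticipate this as the crux: for a general threefold the extra term $\tfrac12\int_X c_1(TX)\,\ch_2(\Phi^{1j})$ would survive, the sharp $i=1$ statement could fail, and so the argument must use precisely the vanishing of $\td_2$.
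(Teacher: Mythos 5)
Your proof is correct, but it follows a genuinely different route from the paper's. The paper's proof is a one-liner: it writes the dual basis in closed form,
\[
\Phi^{ij} \;=\; \ch^{-1}\Big( \td(TX)^{-1}\, \mathrm{PD}\big(\ch(\Phi_{ij})\big) \Big),
\]
where $\mathrm{PD}$ denotes the cohomological Poincar\'e-dual basis; multiplying by $\td(TX)^{-1}$ is exactly what converts the cohomological pairing into the $K$-theoretic one, and the whole lemma then drops out of the single fact that $\td(TX)^{-1} \in 1 + H^{\geq 4}(X)$ on a Calabi--Yau threefold: the correction to the pure class $\mathrm{PD}(\ch(\Phi_{ij})) \in H^{2(3-i)}(X)$ lands in $H^{\geq 2(3-i)+4}(X)$, which vanishes identically for $i=0,1$ and explains the weaker conclusion for $i=2,3$. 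You never invert anything explicitly: you introduce the codimension filtration $F^pK(X)$, prove $(F^pK(X))^{\perp} = F^{4-p}K(X)$ by the vanishing-plus-dimension-count argument (perfectness of the pairing together with $n_i = n_{3-i}$), deduce all the $\geq$ statements at once, and then sharpen the $i=1$ case by pairing $\Phi^{1j}$ against $\mathcal{O}$ and using $\td_1 = \tfrac{1}{2}c_1(TX) = 0$. Both arguments ultimately rest on the same geometric input --- the vanishing of the degree-$2$ part of the Todd class --- but yours isolates precisely where the Calabi--Yau hypothesis is indispensable (only the purity at $i=1$; purity at $i=0$ is automatic on any threefold), at the cost of being longer; the paper's approach buys, in one line, not just the degree statement but an explicit formula for $\Phi^{ij}$ that can be reused elsewhere.
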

\begin{proof}
$\Phi^{ij}$ can be written as:
\[
\Phi^{ij}:= \ch^{-1} \Big(  \td(TX)^{-1} {\rm PD} (\ch (\Phi_{ij})) \Big), 
\]
where PD denotes the Poincar\'e dual. The lemma follows from the definition of $\Phi_{ij}$ and that $\td(TX)^{-1} \in 1 + H^{\geq 4}(X)$.
\end{proof}

% Jockers--Mayr (for Calabi--Yau threefold) and Garoufalidis--Scheidegger (for quintic threefold) conjecture that the small $J^K$-function can be expressed as a \emph{linear} combination of GV invariants.
% \begin{conjecture} [{cf.~\cite{Jockers_Mayr_2019, Garoufalidis_Scheidegger_2022}}]  
% \label{conjectureJK}
% \[
% \begin{split}
% &  J^K(0, q, Q) - (1-q) := \sum_{\beta \in H_2(X,\mathbb{Z})} \sum_{\alpha} \Phi^{\alpha} \langle \frac{\Phi_{\alpha}}{1-qL} \rangle^{X,K}_{0,1,\beta} Q^{\beta} 
% \\
% & \hspace{15mm} = (1-q) \cdot \sum_{0\neq \beta \in H_2(X,\mathbb{Z}) } \sum_{r=1}^{\infty} \GV_{\beta}\, Q^{r\beta}
% \bigg[ \sum_{j=1}^{n_1} \Phi^{1j} \beta_j \, a(r, q^r)   + \Phi^{01}  b(r, q^r) \, \bigg], 
% \end{split}
% \]
% where
% \begin{equation} \label{e:4.1}
% \begin{split}
%     \beta_j &= \int_{\beta} \ch(\Phi_{1j}),
%     \\
%      a(r,q^r) &= \frac{(r-1)}{1-q^r} + \frac{1}{(1-q^r)^2},
%     \\
%      b(r,q^r) &= \frac{r^2-1}{1-q^r} + \frac{3}{(1-q^r)^2} -\frac{2}{(1-q^r)^3}.
% \end{split}
% \end{equation}
% \end{conjecture}

Let us fix some notation. Let 
\[
 t := \sum_{j=1}^{n_1} t_{j} \Phi_{1j}, \quad  \beta_j := \int_{\beta} \ch(\Phi_{1j}), \quad  \beta_t := \int_{\beta} \ch t := \sum_{j=1}^{n_1} t_{j} \beta_j.
\]
Define a $K$-theoretic $J$-function with input $t$
\[
J^K(t,q,Q)  := (1-q) + t + \sum_{\beta \in H_2(X,\mathbb{Z})}\sum_{ \alpha,n} \frac{Q^{\beta}}{n!} \Phi^{\alpha} \langle \frac{\Phi_{\alpha}}{1-qL}, t,\dots, t \rangle^{X,K}_{0,n+1,\beta}.
\]
This is the ``big $J$-function'' in Definition~\ref{def:KbigJ} with the input specialized to $\mathbf{t} =t$. 
Define
\begin{equation} \label{e:jtilde}
\begin{split}
    \Tilde{J}  :=   &(1-q) +
     t + \frac{ \sum_{j=1}^{n_1}\Phi^{1j} \partial_{t_j}F(t)  }{1-q} + \frac{q\ \Phi^{01} F(t)}{(1-q)^2} + \\
     &(1-q)\cdot\sum_{0\neq \beta \in H_2(X,\mathbb{Z}) } \sum_{r=1}^{\infty} \GV_{\beta}\, Q^{r\beta} \cdot
    \\
    \hspace{0mm} \cdot & \Bigg[ \sum_{j=1}^{n_1} \Phi^{1j} \beta_j\bigg(   a(r, q^r) + c(r,q,\beta_t) \bigg)  + \Phi^{01} \bigg( b(r,q^r) + d(r,q,\beta_t)  \bigg) \Bigg],
\end{split}
\end{equation}
where
\[
\begin{split}
    F(t) := &\frac{1}{3!}\langle t,t,t\rangle^{X,K}_{0,3,0} = \frac{1}{6}\sum_{i,j,k=1}^{n_1} t_it_jt_k\int_X \ch(\Phi_{1i})\ch(\Phi_{1j})\ch(\Phi_{1k}),
    \\
      a(r,q^r) &= \frac{(r-1)}{1-q^r} + \frac{1}{(1-q^r)^2},
    \\
     b(r,q^r) &= \frac{r^2-1}{1-q^r} + \frac{3}{(1-q^r)^2} -\frac{2}{(1-q^r)^3} ,
    \\
    c(r,q,\beta_t) &= \frac{\beta_t}{r(1-q)(1-q^r)} + \frac{ e^{\beta_t} -1 - \beta_t }{r^2(1-q)^2},
    \\
    d(r,q,\beta_t) &= \frac{\beta_t}{r(1-q)} \frac{ r(1-q^r) - q^r }{(1-q^r)^2} + \frac{ \beta_t^2 }{2r^2 (1-q)^2}
    \\
    & +\frac{ (1-3q)(e^{\beta_t} - 1 - \beta_t - \frac{\beta_t^2}{2}) + q \beta_t (e^{\beta_t} -1 -\beta_t) }{r^3(1-q)^3}.
\end{split}
\]
\begin{theorem} [{cf.~\cite{Jockers_Mayr_2019}}] \label{conjecture_npt}  
$J^K (t) = \tilde{J}$ for all Calabi--Yau threefolds.
\end{theorem}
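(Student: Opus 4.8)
The plan is to exhibit $\tilde{J}$ as a point on the very same overruled Lagrangian cone $\mathcal{L}^K$ (Givental's symplectic loop space for quantum $K$-theory) on which the big $J$-function lives, and then to pin down its input by a uniqueness argument. By Definition~\ref{def:KbigJ}, $J^K(t)$ is the point of $\mathcal{L}^K$ whose input is $t$, so it suffices to prove two things: (i) $\tilde{J}$ lies on $\mathcal{L}^K$; and (ii) the ``input'' of $\tilde{J}$, i.e.\ the part not forced by the ruling of the cone, equals $t$, so that the mirror transformation relating these two points of $\mathcal{L}^K$ is trivial. Both functions have the shape $(1-q)+t+O(Q)$, where $O(Q)$ collects terms of positive Novikov degree; once (i) is in hand, matching this leading behavior yields (ii) at once from the uniqueness of the dilaton-shifted parametrization of $\mathcal{L}^K$, giving $J^K(t)=\tilde{J}$.

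For (i) I would invoke Givental's \emph{adelic characterization} of $\mathcal{L}^K$ recalled in Section~\ref{s:2}, which tests membership by examining the rational-in-$q$ function $\tilde{J}$ at three types of points: (a) the expansion at $q=1$ must lie on the \emph{cohomological} genus zero Gromov--Witten Lagrangian cone $\mathcal{L}^H$ of $X$ (twisted by $\td(TX)$ as dictated by the virtual HRR correspondence \cite{Tonita_VKHRR}), under the substitution $q=e^z$ with its dilaton shift; (b) at every primitive $m$-th root of unity $\zeta\neq 1$ the principal part of $\tilde{J}$ at $\zeta$ must be reproduced from the value of $\tilde{J}$ at $q^m$ through Givental's explicit recursion (the ``propagator''); (c) $\tilde{J}$ must be regular at $q=0$ and $q=\infty$ apart from the allowed dilaton shift. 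Reading the poles of \eqref{e:jtilde}, the singularities in $q$ occur only at $q=1$ and at the roots of unity produced by the factors $1/(1-q^r)$, so these three tests exhaust the analysis, and (c) is a routine inspection of the explicit formula.

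Step (a) is where the Gopakumar--Vafa data enter. I would compute the Laurent tail of $\tilde{J}$ at $q=1$ from the explicit rational functions $a,b,c,d$ and from $F(t)$. Collecting the leading $q\to 1$ poles, the multiple-cover sums $\sum_{r\ge 1}\GV_{\beta}\,Q^{r\beta}(\dots)$ collapse via the genus zero relation \eqref{e:1.2} into the genus zero cohomological Gromov--Witten potential, while the $e^{\beta_t}$ factors in $c$ and $d$ supply the divisor-equation exponentials of the cohomological $J$-function. Lemma~\ref{lemma_degPhi} guarantees that the dual classes $\Phi^{1j}$ and $\Phi^{01}$ occur precisely in the cohomological degrees forced by the threefold dimension constraint, so that the resulting expression matches the standard genus zero $J$-function of $X$ and therefore lies on $\mathcal{L}^H$.

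The main obstacle is step (b), the root-of-unity recursion carried out in Section~\ref{s:3}. At a primitive $m$-th root of unity $\zeta$ only the summands with $m\mid r$ are singular, and I expect the verification to reduce to a family of exact identities among $a(r,q^r)$, $b(r,q^r)$, $c(r,q,\beta_t)$ and $d(r,q,\beta_t)$: the adelic recursion demands that their principal parts at $\zeta$ be expressible through the value of the same data at $q^m$, with the Kawasaki/``fake'' contributions accounting for the automorphisms of $m$-fold covers. The triple pole in $b$ and the precise mixture of $\beta_t$- and $e^{\beta_t}$-terms in $d$ are tuned exactly so that this recursion closes, and confirming these identities down to the top pole order, uniformly in $\beta_t$, is the delicate computation. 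Once (a)--(c) hold we conclude $\tilde{J}\in\mathcal{L}^K$, and the leading-term comparison of the first paragraph forces the mirror transformation to be trivial, proving $J^K(t)=\tilde{J}$.
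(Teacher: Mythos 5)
Your overall strategy --- verifying that $\tilde{J}$ lies on $\mathcal{L}^K$ via Givental's adelic characterization and then killing the mirror transformation by comparing $\pi_+(\tilde{J}) = 1-q+t$ with $\pi_+(J^K(t))$ --- is exactly the paper's strategy (Section~\ref{s:3}, Theorem~\ref{thm_conj}), and your uniqueness endgame is correct. However, your verification of the condition at $q=1$ has a genuine gap. You claim that after collapsing the multiple-cover sums by \eqref{e:1.2}, the expansion of $\tilde{J}$ at $q=1$ ``matches the standard genus zero $J$-function of $X$'' and therefore lies on the (twisted) cohomological cone. This is false as stated: the correct identity (Proposition~\ref{prop_J_q=1}) is $\tilde{J}|_{q=1} = J^{\fake}(t+\widetilde{\arm})$, where the input is shifted by a nontrivial arm series $\widetilde{\arm}$ (computed in Lemma~\ref{lemma_arm_expansion}), which collects the $\zeta=1$ pieces of the root-of-unity expansions of $a,b,c,d$. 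If you match $\tilde{J}|_{q=1}$ directly against $J^{\fake}(t)$ you will find a discrepancy both in the power-series part (the arm itself) and in the principal part (an extra $1/(1-q)$ term); the proof must recognize this discrepancy as $J^{\fake}(t+\widetilde{\arm})-J^{\fake}(t)$, and that identification works only because $\ch(\widetilde{\arm})\in H^{\geq 4}(X)$ and because of the Calabi--Yau vanishing lemma (Lemma~\ref{lemma_vanishingdeg2}: genus zero invariants with an insertion of $\deg_{\mathbb{C}}\geq 2$ and $\beta\neq 0$ vanish), which reduces the difference to a single degree-zero Poincar\'e-pairing term.

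That vanishing lemma is the linchpin of the entire verification and it never appears in your proposal. The same omission affects your step (b): the condition at a primitive $r$-th root of unity is not a self-contained recursion among the rational functions $a,b,c,d$; it is the stem-space identity of Theorem~\ref{thm_adelic_charac}(iii), involving twisted Gromov--Witten invariants of the orbifold $[X/\mathbb{Z}_r]$ with $\widetilde{\leg}_r = \Psi^r(\widetilde{\arm}|_{t=0})$ insertions and the input $\tilde{\delta}_{\zeta}$. These stem invariants become explicitly computable --- reducing to Poincar\'e pairings on $X/\mathbb{Z}_r$ together with the constant $r^{-n}$ of Lemma~\ref{lemma_rpower} --- only after one shows that $\ch$ of arm, leg and tail all lie in $H^{\geq 4}(X)$ and applies the vanishing lemma again to kill every term of positive degree (Propositions~\ref{prop:Jzeta_t=0} and \ref{prop_Jzeta_tnot0}). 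So while your outline points in the right direction and names the correct framework, the two computations you defer are precisely where the content of the proof lies, and the mechanism that makes them close up --- the CY3 degree vanishing and the resulting structure of arm, leg and tail --- is absent from your proposal.
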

The proof of this theorem will occupy the entire Section~\ref{s:3}.

\begin{remark}
%\begin{itemize}
    \textbf{(1)} $\beta=0$ term in the definition of $J^K(t,q,Q)$ gives nontrivial contribution:
    \[
        \frac{ \sum_{j=1}^{n_1}\Phi^{1j} \partial_{t_j}F(t)  }{1-q} + \frac{q\ \Phi^{01} F(t)}{(1-q)^2},
    \]
    which comes from the Poincar\'e pairing on $X$. We point out that our $\beta=0$ terms differ from those in \cite{Jockers_Mayr_2019}.

    \textbf{(2)} When $t=0$, we have $c(r,q,\beta_t) = d(r,q,\beta_t)=0$. In this case, Theorem~(\ref{conjecture_npt}) specializes to the form of Conjecture~1.1 in \cite{Chou_Lee_2022_QKGVI}.

    \textbf{(3)} One observes that $\Tilde{J}$ is a rational function in $q$, with poles only at roots of unity, and the orders of poles are no greater than $3$. This was explained in \cite[Corollary~3.3]{Chou_Lee_2022_QKGVI}.

    \textbf{(4)} The dependence of $\tilde{J}$ on $\beta_t$ is of the form $e^{\beta_t}$, except finitely many monomials in $\beta_t^{\leq 2}$ and an overall factor $\beta_t$ to some terms. This will be explained in geometric germs in Remark~\ref{r:3.14}.
\end{remark}

\subsection{From JM Conjecture to Main Theorem}
In this subsection we show that one can obtain all genus zero quantum $K$-invariants with cotangent lines $L_i$ (descendants) as finite linear integral combinations of Gopakumar--Vafa invariants, assuming Theorem~\ref{conjecture_npt}.

\begin{proposition} \label{p:1.5}
Theorem~\ref{conjecture_npt} implies Main Theorem.
\end{proposition}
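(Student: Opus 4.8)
The plan is to derive the Main Theorem from Theorem~\ref{conjecture_npt} in two stages. The first is a \emph{reconstruction} step: every genus zero quantum $K$-invariant $\langle \tau_{d_1}(\Phi_{\alpha_1}),\dots,\tau_{d_m}(\Phi_{\alpha_m})\rangle^{X,K}_{0,m,\beta}$ should be expressed as a finite integral combination of the ``essential'' invariants $\langle \frac{\Phi_\alpha}{1-qL}, t,\dots,t\rangle^{X,K}_{0,n+1,\beta}$ that appear as the coefficients of $J^K(t)$. The second is an \emph{invertibility} step: the explicit linear map $\GV \mapsto (\text{essential invariants})$ furnished by $\tilde J$ in \eqref{e:jtilde} must be inverted over $\mathbb{Z}$. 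Combining the two with Theorem~\ref{conjecture_npt} yields both directions of the Main Theorem and the integrality statement.

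For the reconstruction step I would work in A.~Givental's symplectic loop space formalism summarized in Section~\ref{s:2}. The coefficients of $J^K(t)$ are the matrix entries of the fundamental solution $S(t,q)$ of the quantum $K$-theoretic connection: even though the input $t=\sum_j t_j\Phi_{1j}$ is restricted to divisor classes, the special insertion $\frac{\Phi_\alpha}{1-qL}$ and the dual output index $\Phi^\alpha$ range over the \emph{entire} basis $\{\Phi_\alpha\}_{\alpha=1}^N$, so $S(t,q)$ is already a full $\operatorname{End}(K(X))$-valued operator. The whole genus zero theory is recorded in the overruled Lagrangian cone $\mathcal{L}$, and by its ruled structure $\mathcal{L}\cap T_f\mathcal{L}=(1-q)\,T_f\mathcal{L}$ one has $\mathcal{L}=\bigcup_{t}(1-q)\,S(t,q)^{-1}\mathcal{K}_+$. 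Thus, once the full $t$-dependence of $S(t,q)$ is known, the cone is determined, and with it every multi-descendant, multi-insertion invariant; in particular descendants at the remaining marked points are recovered by expanding the $q$-dependence of a general input $\mathbf{t}(q)$ as in Definition~\ref{def:KbigJ}.

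The heart of the reduction is therefore to recover the quantum $K$ connection in \emph{all} directions from its restriction to the divisor slice. The $K$-theoretic string and dilaton equations supply the $\Phi_{01}=\mathcal{O}$ direction and the $q$-rescaling direction, and the $K$-theoretic divisor equation relates the $\Phi_{1j}$ directions to shifts in the Novikov variable $Q$ (this is exactly the $\beta_t=\int_\beta \ch t$ dependence made explicit in $\tilde J$). I expect the main obstacle to be the directions along $\Phi_{2j}$ and $\Phi_{3j}$: for a general Calabi--Yau threefold $H^{\geq 4}(X)$ is \emph{not} generated by divisors, so the classical reconstruction of Kontsevich--Manin type is unavailable. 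The resolution I would pursue is that these higher-codimension classes already occur both as the special insertion $\Phi_\alpha$ and as the output index of $S(t,q)$, so that the adelic characterization of $\mathcal{L}$, together with the reduction of quantum $K$-invariants to cohomological data via the virtual Hirzebruch--Riemann--Roch theorem (where the virtual dimension constraint on a CY3 forces the combinations to be finite), pins down the connection in these directions as well. Every equation used here carries integer coefficients, so each reduction step preserves integrality.

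For the invertibility step and the converse direction, I would exploit that the transformation $\GV\mapsto(\text{essential invariants})$ in \eqref{e:jtilde} is block-triangular in the curve class, since the $r$-th multiple cover of $\GV_\beta$ contributes only at $Q^{r\beta}$: ordering classes by divisibility makes it block-triangular with diagonal blocks given by the leading ($r=1$) coefficients $a(1,q),b(1,q)$ together with the classical $\beta=0$ terms, and these diagonal blocks are invertible, so a triangular (M\"obius-type) inversion exactly as in \eqref{e:1.2} expresses each $\GV_\beta$ as a finite combination of essential invariants. Finally, the rational functions $a,b,c,d$ have integral Taylor coefficients in $q$, and the $e^{\beta_t}$-dependence is matched by the $\tfrac{1}{n!}$ normalization of the divisor insertions, so that extracting a single invariant (a fixed coefficient in the $q$- and $t$-expansion) yields an integer combination of finitely many $\GV$. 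Since the quantum $K$-invariants are integral by definition and the $\GV$ are integral by \cite{Ionel_Parker_2018}, and every map above is $\mathbb{Z}$-linear and integrally invertible, the transformations in both directions preserve integrality, reducing the Main Theorem to Theorem~\ref{conjecture_npt}.
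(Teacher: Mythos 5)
Your second (invertibility) step is sound and is essentially how the converse direction works: the formula \eqref{e:jtilde} is triangular with respect to divisibility of curve classes, with invertible $r=1$ leading terms, so a M\"obius-type inversion recovers each $\GV_\beta$ from finitely many essential invariants. The gap is in your first (reconstruction) step. Theorem~\ref{conjecture_npt} only gives $J^K(t)$ for $q$-independent divisor inputs $t$, i.e.\ the invariants $\langle \frac{\Phi_\alpha}{1-qL}, t, \dots, t\rangle^{X,K}_{0,n+1,\beta}$, so descendants at the marked points other than the first are \emph{not} ``recovered by expanding the $q$-dependence of a general input $\mathbf{t}(q)$'' --- that would require knowing the big $J$-function on all of $\mathcal{K}_+$, which is exactly what is to be established. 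Your appeal to the ruling $\mathcal{L}=\bigcup_{t}(1-q)\,S(t,q)^{-1}\mathcal{K}_+$ has the same circularity: the divisor-slice data determines the derivatives of $J^K$ (hence the columns of $S$) only in the divisor and string directions, not the full operator, since the special insertion index $\Phi_\alpha$ is not a derivative direction. Likewise, asserting that the adelic characterization ``pins down'' the connection in the $\Phi_{2j},\Phi_{3j}$ directions is not an argument.

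The paper closes both holes with concrete, elementary inputs that your proposal lacks. First, descendants at the $i$-th point ($i\geq 2$) are handled by the geometric relation $L_i = L_1^{-1}\otimes \so_{D_{i|1}}$ of \cite[(2)]{Lee_Pandharipande_2004}, which inductively trades $L_i$ insertions for additional powers of $L_1$ (already present in the formulation of Theorem~\ref{conjecture_npt}) and boundary contributions with fewer points. Second --- and this is the key fact you gesture at via ``the virtual dimension constraint on a CY3'' but never state --- genus zero quantum $K$-invariants with an insertion $\Phi$ satisfying $\ch(\Phi)\in H^{\geq 4}(X)$ simply \emph{vanish} for $\beta\neq 0$ by \eqref{e:3.2} in Lemma~\ref{lemma_vanishingdeg2}; this vanishing, not any cone-theoretic reconstruction, is what makes the divisor slice sufficient on a Calabi--Yau threefold. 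Combined with the $K$-theoretic string equation of \cite[(22)]{Lee_2004} for $\so_X$ insertions, these three reductions express every genus zero descendant invariant in terms of the essential ones by explicit integral formulas; no appeal to the Lagrangian cone or the adelic characterization is needed for this proposition (those enter only in the proof of Theorem~\ref{conjecture_npt} itself, in Section~\ref{s:3}).
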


\begin{proof}
We note that the knowledge of $J^K (t)$ in Theorem~\ref{conjecture_npt} is equivalent to that of a collection of genus zero quantum $K$-invariants with arbitrary insertions, including any powers of cotangent line bundles $L_1^{\otimes k}$, $k\in \mathbb{Z}$, at the first marked point, and only $\op{ch}^{-1} (H^2(X))$ insertions without $L_i$ at all remaining marked points. 
In order to get all genus zero quantum $K$-invariants, we will need to allow cotangent line bundles $L_i$'s and general $K$-classes at all marked points. This can be done as follows.

\textbf{1}. Inserting $L_i$ at the $i$-th point: From \cite[(2)]{Lee_Pandharipande_2004}, we have
\[
 L_i = L_1^{-1} \otimes \so_{D_{i | 1}},
\]
where $D_{i|1}$ is the virtual divisor whose general element is a map from a curve with a node separating the $1$-st and the $i$-th marked point to $X$. Inserting $L_i$ can be \emph{inductively} reduced to inserting additional $L_1$, which is already included in the formulation of Theorem~\ref{conjecture_npt}.

\textbf{2}. Inserting $K$-classes in $\op{ch}^{-1} (H^{\ge4} (X))$: By \eqref{e:3.2} in Lemma~\ref{lemma_vanishingdeg2} quantum $K$-invariants with insertions in $\op{ch}^{-1} (H^{\ge4} (X))$ vanish. Hence they can be omitted.

\textbf{3}. Inserting $K$-classes in $\op{ch}^{-1} (H^0(X))$: For $\op{ch}^{-1} (H^0(X))$, that is, $\so_X$, the $K$-theoretic \emph{string equation} in \cite[(22)]{Lee_2004} reduce it to the ``essential'' invariants already covered in Theorem~\ref{conjecture_npt}.

Therefore, all quantum $K$-invariants can be reduced to the essential invariants by simple explicit formulas.
\end{proof}

\section{Givental's adelic characterization} \label{s:2}
In this section, we briefly summarize results about adelic characterization in quantum $K$-theory in \cite{Givental_Tonita_2011} by Givental and Tonita. Further details can be found in Part I \cite[\S~2.3]{Chou_Lee_2022_QKGVI} or the original papers by Givental \cite{Givental_ER_Coh_K_2015, Givental_PEV_Toric_2015, Givental_PEVIII_ER}.

Let $\Lambda = \mathbb{Q}[\![Q]\!]$ be the Novikov ring and 
\[
 \mathbf{K} := K^0(X)\otimes \Lambda.
\]
Givental's loop space for quantum $K$-theory is defined as
\[
\K := K^0(X)(q) \otimes \Lambda.
\]
$\K$ has a natural symplectic structure with the symplectic form $\Omega$,
\[
\K \ni f,g \mapsto \Omega(f,g) := \Big( \Res_{q=0} + \Res_{q=\infty} \Big) ( f(q), g(q^{-1}) )^{K} \frac{dq}{q}.
\]
Here $( \cdot, \cdot )^{K}$ denotes the $K$-theoretic intersection pairing on $\mathbf{K}$:
\[
(a,b)^{K} := \chi(X, a \otimes b) = \int_X \td (T_X) \ch(a) \ch(b).
\]
$\K$ admits the following Lagrangian polarization with respect to $\Omega$:
\[
\begin{split}
    \K &= \K_+ \oplus \K_-
    \\
    & := \mathbf{K}[q,q^{-1}] \, \oplus \, \{ f(q) \in \K | f(0) \neq \infty ,\ f(\infty) =0 \}.
\end{split}
\]
\begin{definition}\label{def:KbigJ}
The \emph{big $J$-function} of $X$ in the quantum $K$-theory is defined as a map $\K_+ \rightarrow \K$:
\[
   \mathbf{t} \mapsto J^{\KK}(\mathbf{t}) := (1-q) + \mathbf{t}(q) + \sum_{\alpha} \Phi^{\alpha}\sum_{n,\beta} \frac{Q^{\beta}}{n!} \langle \frac{\Phi_{\alpha}}{1-qL}, \mathbf{t}(L),\dots, \mathbf{t}(L) \rangle ^{X,K}_{0,n+1,\beta},
\]
where $\{ \Phi_{\alpha} \}$ and $\{\Phi^{\alpha}\}$ are Poincar\'e-dual basis of $K^0(X)$ with respect to $( \cdot, \cdot )^{K}$. 
The  \emph{overruled Lagrangian cone} $\mathcal{L}^K \subset \mathcal{K}$ is defined to be the range $J^K (\K_+)$. 
\end{definition}

Applying the virtual Kawasaki-Hirzebruch--Riemann--Roch formula for Deligne--Mumford stacks (VKHRR) \cite{Tonita_VKHRR} on $\M_{g,n}(X,\beta)$, we get  
\[  \begin{split}
J^{\KK}(\mathbf{t}) &=(1-q) + \mathbf{t}(q) + \sum_{n,\beta,\alpha} \frac{Q^{\beta} \Phi^{\alpha}}{n!}\langle \frac{\Phi_{\alpha}}{1-qL}, \mathbf{t}(L),\dots, \mathbf{t}(L)  \rangle^{X,K}_{0,n+1,\beta}
\\
&= (1-q) + \mathbf{t}(q) + \sum_{n,\beta,\alpha} \frac{Q^{\beta} \Phi^{\alpha}}{n!}\sum_{\zeta} \langle \frac{\Phi_{\alpha}}{1-qL}, \mathbf{t}(L),\dots,\mathbf{t}(L)  \rangle^{X_{\zeta}}_{0,n+1,\beta},
\end{split}\]
where in $\sum_{\zeta}$ $\zeta$ runs through all roots of unity (including 1), $X_{\zeta}$ stand for the collection of inertia stacks (``Kawasaki strata'') where $g$ acts on $L_1$ with eigenvalue $\zeta$, and $\langle ... \rangle^{X_{\zeta}}$ denote the contributions of $X_{\zeta}$ in the Riemann--Roch formula. In other words, $\langle ...\rangle^{X,K}$ represent (true) quantum $K$-invariants while $\langle ... \rangle^{X_{\zeta}}$ stand for (collections of) \emph{cohomological} invariants. Symbolically, we have 
\[
\langle \cdots \rangle^{X,K} = \sum_{\zeta} \langle \cdots\rangle^{X_{\zeta}}.
\]

Let $\zeta$ be a primitive $r$-th roots of unity. Recall in Part I (taken from \cite{Givental_Tonita_2011}) that we use arm$(L)$, leg$_{r}(L)$, and tail$_{\zeta}(L)$ to denote certain specific contributions from Kawasaki strata in the Riemann--Roch formula for stacks:
\begin{equation} \label{eqn_arm_leg_tail}
\begin{split}
    {\rm arm}(q) &= \sum_{n,\beta \neq 0,\alpha} \frac{Q^{\beta} \Phi^{\alpha}}{n!}\sum_{\zeta' \neq 1} \langle \frac{\Phi_{\alpha}}{1-qL}, \mathbf{t}(L),\dots,\mathbf{t}(L)  \rangle^{X_{\zeta'}}_{0,n+1,\beta} ;
    \\
    {\rm leg}_{r}(q) &= \Psi^r ({\rm arm}(q)|_{\mathbf{t}=0} );
    \\
    {\rm tail}_{\zeta}(q) &= \sum_{n,\beta \neq 0,\alpha} \frac{Q^{\beta} \Phi^{\alpha}}{n!}\sum_{\zeta' \neq \zeta} \langle \frac{\Phi_{\alpha}}{1-qL}, \mathbf{t}(L),\dots,\mathbf{t}(L)  \rangle^{X_{\zeta'}}_{0,n+1,\beta} ,
\end{split}
\end{equation}
where $\zeta'$ in the above sums are arbitrary roots of unity and
$\Psi^r$ are the \emph{Adams operations}. Recall that Adams operations are additive and multiplicative endomorphisms of $K$-theory or more generally $\lambda$-rings, acting on line bundles by $\Psi^r(L) = L^{\otimes r}$. Here $\Psi^r$ also act on the Novikov variables by $\Psi^r(Q^{\beta}) = Q^{r\beta}$ and on $q$ by $\Psi^r(q) = q^r$.

% The following propositions (\ref{prop_head_fake}, \ref{Prop_Stem_inv}) about the fake and stem theory respectively will justify the above definitions.

Another necessary ingredient in the proof is the \emph{fake quantum $K$-theory},  defined by ``naively'' applying the virtual Riemann--Roch for schemes to stacks
\[
\langle \tau_{d_1}(\Phi_1)\dots \tau_{d_n}(\Phi_n) \rangle^{X, \fake}_{g,n,\beta} := \int_{[\M_{g,n}(X,\beta)]^{\vir} } \td(T_{\M_{g,n}(X,\beta)}^{\vir}) \ch ( \otimes_{i=1}^n \ev_i^*(\Phi_i) L_i^{d_i} ),
\]
where $[\M_{g,n}(X,\beta)]^{\vir}$ is the (cohomological) virtual fundamental class and $[T^{\vir}]$ the virtual tangent bundle. %, the virtual difference of deformation and obstruction.
There is a simlar Givental formalism for the fake quantum $K$-theory. Givental's loop space is changed to formal series
\[
\K^{\fake} := \mathbf{K}[\![ 1-q, (1-q)^{-1}]
\]
%be the space of Laurent series in $1-q$ with coefficient in $K$ in the $Q$-adic sense. 
%See Remark \ref{Element_in_K}.
%
with the symplectic form $\Omega^{\fake}$
\[
\Omega^{\fake}(f,g) := - \Res_{q=1} (f(q), g(q^{-1}))^{K} \frac{dq}{q},
\]
and a corresponding Lagrangian polarization:
\[
\begin{split}
    \K^{\fake} &= \K_+^{\fake} \oplus \K_-^{\fake}
    \\
 : &= \mathbf{K}[\![q-1]\!] \oplus {\rm span}_{\mathbf{K}} \left\{ \frac{q^k}{(1-q)^{k+1}}  \right\}_{k \geq 0}.
\end{split}
\]
%\begin{definition}
The big $J$-function of $X$ in the fake $K$-theory is defined as a map $\K_+^{\fake} \rightarrow \K^{\fake}$:
\[
\mathbf{t} \mapsto J^{\fake} (\mathbf{t}) := (1-q) + \mathbf{t}(q) + \sum_{\alpha} \Phi^{\alpha} \sum_{n,d} \frac{Q^d}{n!} 
\langle \frac{\Phi_{\alpha}}{1-qL}, \mathbf{t}(L),\dots,\mathbf{t}(L) \rangle ^{X,\fake}_{0,n+1,d}.
\]
%\end{definition}

The definitions in \eqref{eqn_arm_leg_tail} can be interpreted as formal power series as in Part I. For this purpose a more general version of arm and tail, which allows ``non-geometric'' input $f\in \mathcal{K}$, will be defined. 
Let $f\in \mathcal{K}$ and $\zeta$ be any primitive $r$-th roots of unity with $r\geq 2$. Define
\begin{equation} \label{e:2.2}
\begin{split}
    \arm (f;q) &:= \pi^{\fake}_+ \Big( f|_{q=1} \Big) - \pi_+(f)|_{q=1}  \in \mathbf{K}[\![ 1-q ]\!];
    \\
    \leg_{r}(q) &:= \Psi^r \Big( \pi_+^{\fake} \Big(J^K(0)|_{q=1} \Big) \Big) \in \mathbf{K}[\![ 1-q ]\!];
    \\
    \tail_{\zeta} (f;q) & := \pi_+^{1-\zeta q}\Big( f|_{q=\zeta^{-1}}  \Big) - \pi_+(f)|_{q= \zeta^{-1}}  \in \mathbf{K}[\![ 1-\zeta q ]\!];
    \\
 \delta_{\zeta} (f; q) & := \Big[ 1- \zeta^{-1} q +  \pi_+(f)(\zeta^{-1} q) + {\rm tail}_{\zeta}(f; \zeta^{-1} q) \Big]_{q=1} \\
  & \quad \in \mathbf{K}[\![ 1-q ]\!],
\end{split}
\end{equation}
where
\[
\begin{split}
    &\pi_+ : \K \rightarrow \K_+ ;
    \\
    &\pi_+^{\fake} : \K^{\fake} \rightarrow \K_+^{\fake} ;
    \\
    &\pi_+^{1-\zeta q} : \mathbf{K}[\![ 1-\zeta q, (1-\zeta q)^{-1}] \rightarrow \mathbf{K}[\![ 1-\zeta q ]\!]
\end{split}
\]
are projections and \emph{$[...]|_{q=\zeta}$ stands for series expansion at $q=\zeta$}.

\begin{remark} \label{r:2.2}
Suitably interpreted, the generalized definitions above are actually the same as those in \eqref{eqn_arm_leg_tail} when $f= J^K(\mathbf{t})$. For example, we will see in Proposition~\ref{prop_head_fake} that
\[
\begin{split}
    &\pi_+^{\fake} \Big(J^K(\mathbf{t})|_{q=1} \Big)  - \pi_+(J^K(\mathbf{t}))|_{q=1}
    \\
    & = \pi_+^{\fake} \Big(J^K(\mathbf{t})|_{q=1} \Big) - \Big[ (1-q) + \mathbf{t} \Big] =  {\rm arm}(q),
\end{split}
\]
and hence two definitions of $\mathrm{arm}(q)$ agree.
The $\leg_r(q)$ in \eqref{e:2.2} is consistent with $\leg_r(q)|_{q=1}$ in equation~(\ref{eqn_arm_leg_tail}). 
With this understanding, when $f$ takes the form
\[
    1 - q + \mathbf{t} + f_-( \mathbf{t}, q ) 
\]
with $f_-(\mathbf{t},q) \in \K_-$, we \emph{abuse the notation} and do not distinguish these two.
\end{remark}

\begin{proposition}[Essentially {\cite[Proposition~1]{Givental_Tonita_2011}}]\label{prop_head_fake}
\[
J^{\KK}(\mathbf{t})|_{q=1} = J^{\fake}(\mathbf{t} + {\rm arm}).
\]
\end{proposition}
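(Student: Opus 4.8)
The plan is to read off both sides from the virtual Kawasaki--Hirzebruch--Riemann--Roch (VKHRR) decomposition, organizing the Kawasaki strata by the eigenvalue $\zeta$ of the automorphism generator $g$ on the first cotangent line $L_1$. First I would record that, after VKHRR, each correlator $\langle \tfrac{\Phi_\alpha}{1-qL_1},\mathbf{t}(L),\dots\rangle^{X,K}$ is a sum over such strata, and that $J^K(\mathbf{t})$ is rational in $q$ with poles only at roots of unity, so that the expansion $J^K(\mathbf{t})|_{q=1}$ is a well-defined element of $\K^{\fake}$. The organizing observation is a pole count: on the stratum with eigenvalue $\zeta$ the descendant factor $\tfrac{1}{1-qL_1}$ is replaced (up to a nilpotent) by $\tfrac{1}{1-q\zeta\,\bar{L}_1}$ and hence has its pole at $q=\zeta^{-1}$. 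Thus only the untwisted sector $\zeta=1$ contributes a pole at $q=1$, while every sector with $\zeta\neq1$ is regular there and, by \eqref{eqn_arm_leg_tail}, these sum to $\arm$.

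Next I would analyze the untwisted sector geometrically. If $g$ acts trivially on $L_1=T^*_{p_1}C$ then it fixes $p_1$ with trivial tangent action, which forces $g$ to act as the identity on the entire component $C_0$ carrying $p_1$ (a finite-order automorphism of a smooth rational curve with trivial $1$-jet at a point is trivial). The stable map therefore splits as the pointwise fixed component $C_0$ together with twisted ``tails'' --- components on which $g$ acts nontrivially --- attached at nodes. On $C_0$ there is no stacky correction, so the contribution of the fixed part is the naive virtual Riemann--Roch, i.e.\ the fake invariants; each twisted tail contributes a local factor at its attaching node.

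The central step is to identify the tail contributions as an input shift. Using the splitting at the attaching nodes --- summation over the dual bases $\Phi_\alpha,\Phi^\alpha$ together with the node smoothing parameter, which regenerates the descendant factor $\tfrac{1}{1-qL}$ --- I would show that the generating series of all twisted tails attached at a node is precisely $\arm$ (which already carries the inputs $\mathbf{t}$ living on the twisted components). The fake correlators on $C_0$, with input $\mathbf{t}$ at the original marked points on $C_0$ and input $\arm$ at each tail node, then reproduce the correlator sum of $J^{\fake}$ at argument $\mathbf{t}+\arm$. Adding the dilaton and linear terms --- $(1-q)+\mathbf{t}$ from $J^K$ itself and the extra $\arm$ from the regular $\zeta\neq1$ sectors --- yields
\[
J^K(\mathbf{t})|_{q=1}=(1-q)+(\mathbf{t}+\arm)+\sum_\alpha\Phi^\alpha\sum_{n,d}\frac{Q^d}{n!}\Big\langle \tfrac{\Phi_\alpha}{1-qL},(\mathbf{t}+\arm)(L),\dots\Big\rangle^{X,\fake}_{0,n+1,d}=J^{\fake}(\mathbf{t}+\arm).
\]
Finally, to reconcile with \eqref{e:2.2}: since $\arm$ is the $q=1$ Taylor part of $J^K-\pi_+(J^K)\in\K_-$, the geometric $\arm$ coincides with $\pi_+^{\fake}(J^K|_{q=1})-\pi_+(J^K)|_{q=1}$, confirming Remark~\ref{r:2.2}.

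The hard part will be the factorization of the virtual Riemann--Roch contribution of the untwisted stratum across the attaching nodes, and the precise matching of the twisted-tail series with $\arm$ --- in particular verifying that the dilaton shift and the descendant factor $\tfrac{1}{1-qL}$ glue correctly, so that tails enter as genuine $J$-function inputs rather than bare insertions. This is exactly the content of Givental--Tonita's Proposition~1; once it is in place, the remaining bookkeeping (rationality in $q$, the action of the Adams operations on $Q$ and $q$, and the comparison of the polarizations $\K_\pm$ with $\K^{\fake}_\pm$) is routine.
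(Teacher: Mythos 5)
Your proposal is correct and matches the paper's treatment of this statement: the paper gives no independent proof, quoting the result directly as Givental--Tonita's Proposition~1, and your sketch --- Kawasaki strata sorted by the eigenvalue $\zeta$ of $g$ on $L_1$, the pole count isolating the untwisted sector at $q=1$, the rigidity argument forcing the head component to be an honest stable map contributing fake invariants, and the twisted tails regenerating the factor $\frac{1}{1-qL}$ at the attaching nodes so as to enter as the input shift by ${\rm arm}$ --- is precisely the argument of that reference, including the reconciliation of the geometric ${\rm arm}$ with the projection formula of \eqref{e:2.2} as in Remark~\ref{r:2.2}. Since you explicitly defer the node-factorization step to Givental--Tonita's Proposition~1, which is exactly what the paper cites for the whole statement, your proof is as complete as the paper's own treatment and follows the same route.
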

\quad

For $\zeta\neq 1$, a primitive $r$-th roots of unity, another ``Kawasaki strata'' will also be used, namely, the \emph{stem space}, which is isomorphic to
%${\displaystyle \M_{0,n+2} \left( [X/\mathbb{Z}_r], d; (g,1,\dots,1,g^{-1}) \right) }$, denoted by 
\[
 \M_{0,n+2,\beta}^X(\zeta) := \M_{0,n+2} \left( [X/\mathbb{Z}_r], \beta; (g,1,\dots,1,g^{-1}) \right) .   
\]
% henceforth for short. 
Here the group elements $g, 1, g^{-1}$ signal the twisted sectors in which the marked points lie. As in Part I, the notation $[...]^{X_{\zeta}}$ will be reserved for stem contributions
\begin{equation} \label{e:stem}
\begin{split}
    &\Big[ T_1(L), T(L),\dots, T(L), T_{n+2}(L) \Big]^{X_{\zeta}}_{0,n+2,\beta}
    \\
 := &\int_{[\M_{0,n+2,\beta}^X(\zeta)]^{\vir}} \td( T_{\M} ) \ch \left( 
    \frac{ \ev_1^* (T_1(L)) \ev_{n+2}^* (T_{n+2}(L)) \prod_{i=2}^{n+1} \ev_i^* T(L) }
    {\Tr (\Lambda^* N^*_{\M})}  \right),
\end{split}
\end{equation}
where $[\M_{0,n+2,\beta}^X(\zeta)]^{\vir}$ is the virtual fundamental class, $T_{\M}$ is the (virtual) tangent bundle to $\M_{0,n+2,\beta}^X(\zeta)$, and $N_{\M}$ is the (virtual) normal bundle of $\M_{0,n+2,\beta}^X(\zeta) \subset \M_{0,nr+2}(X,r\beta)$. We denote the trace bundle, $\Tr(F)$, the virtual bundle
\[
\Tr (F) := \sum_{\lambda} \lambda F_{\lambda},
\]
where the sum is over the eigenvalue of $g$-action with the corresponding eigenspace. 

\begin{proposition} [{ \cite[\S~8]{Givental_Tonita_2011} }] \label{Prop_Stem_inv}
Let $\zeta$ be a primitive $r$-th roots of unity.
We have
\[
\begin{split}
    & \sum_{n,\beta,\alpha}  \frac{Q^{\beta} \Phi^{\alpha}}{n!}\langle \frac{\Phi_{\alpha}}{1-qL}, \mathbf{t}(L),\dots,\mathbf{t}(L)  \rangle^{X_{\zeta}}_{0,n+1,\beta} 
    \\
   = & \sum_{n,\beta,\alpha} \frac{Q^{r\beta}\Phi^{\alpha}}{n!} \Big[ \frac{\Phi_{\alpha}}{1-q\zeta L^{1/r}}, {\rm leg}_{r}(L),\dots, {\rm leg}_{r}(L),  \delta_{\zeta}(L^{1/r}) \Big]^{X_{\zeta}}_{0,n+2,\beta},
\end{split}
\]
where $\delta_{\zeta}(q) =\delta_{\zeta}(J^K; q)$ in \eqref{e:2.2}.
% \[
% \delta_{\zeta}(q) = (1- \zeta^{-1} q) + \mathbf{t}(\zeta^{-1} q) + {\rm tail}_{\zeta}(\zeta^{-1} q).
% \]
\end{proposition}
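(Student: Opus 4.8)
The plan is to establish this as a \emph{structure theorem} for the Kawasaki stratum $X_\zeta$, following the analysis of Givental--Tonita \cite[\S~8]{Givental_Tonita_2011}. The statement is an identity of generating functions obtained by applying VKHRR componentwise and reorganizing the contribution of those inertia components of $\M_{0,n+1}(X,\cdot)$ on which the distinguished automorphism $g$ of order $r$ acts on the first cotangent line $L_1$ with eigenvalue $\zeta$. First I would make precise what such a component parametrizes: a genus-zero stable map $(C,x_1,\dots,x_{n+1},f)$ together with an order-$r$ automorphism $g$ commuting with $f$, fixing all $x_i$, and acting on $T_{x_1}C$ by $\zeta$. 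The entire content is to describe this $g$-fixed locus geometrically and to match the integrand of \eqref{e:stem} term by term.

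\emph{Geometric decomposition.} The condition $g\cdot L_1=\zeta\neq 1$ forces $x_1$ to lie on a distinguished chain of components --- the \emph{stem} --- on which $\langle g\rangle\cong\mathbb{Z}_r$ acts by rotation. Taking the quotient of the domain by $\langle g\rangle$ turns the stem into a genus-zero orbicurve with exactly two $\mathbb{Z}_r$-orbifold points, lying in the twisted sectors labelled $g$ and $g^{-1}$; since $g$ acts trivially on $X$, the map descends to a representable map to $[X/\mathbb{Z}_r]$. This identifies the fixed locus with the stem space $\M_{0,n+2,\beta}^X(\zeta)$, and because the domain is a (generically) $r$-fold cover of its quotient, the original degree equals $r$ times the stem degree $\beta$, which accounts for $Q^{r\beta}$ on the right. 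I would then identify the virtual class of the fixed locus with $[\M_{0,n+2,\beta}^X(\zeta)]^{\vir}$ and the moving part of the virtual tangent bundle with $N_\M$, so that the factor $\Tr(\Lambda^*N^*_\M)$ in \eqref{e:stem} is precisely the KRR normal-bundle denominator; the ambient $\M_{0,nr+2}(X,r\beta)$ records the $r$-fold lifted leg data together with the two stem ends.

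\emph{Matching the insertions.} Three checks remain. (i) On the orbifold stem the natural cotangent line at $x_1$ is the $r$-th tensor root $L^{1/r}$, and $g$ rotates it by $\zeta$, so $\frac{\Phi_\alpha}{1-qL_1}\mapsto \frac{\Phi_\alpha}{1-q\zeta L^{1/r}}$. (ii) The trees attached to the stem at its untwisted special points --- the \emph{legs} --- occur in $g$-orbits of size $r$; after applying VKHRR to them and using that the $r$-fold cover is reflected by the Adams operation (additive, multiplicative, and acting by $\Psi^r(L)=L^{\otimes r}$, $\Psi^r(Q^\beta)=Q^{r\beta}$, $\Psi^r(q)=q^r$), their total contribution is $\Psi^r$ of the fake head $\pi_+^{\fake}(J^K(0)|_{q=1})$, that is, $\leg_r(L)$ as in \eqref{e:2.2}. (iii) The data concentrated at the other end of the stem --- the $g^{-1}$-twisted point, where the remaining eigenvalue-strata of the original curve are glued --- is exactly the combination of projected $J$-function and tail contributions that \eqref{e:2.2} packages as $\delta_\zeta(L^{1/r})=\delta_\zeta(J^K;L^{1/r})$; here I would invoke Proposition~\ref{prop_head_fake} to rewrite the head at $q=1$ and the definition of $\tail_\zeta$ to recognize the expansion at $q=\zeta^{-1}$.

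\emph{Main obstacle.} The routine part is the quotient identification and the degree/marked-point bookkeeping. The delicate part is the node analysis: one must track the smoothing-at-the-node cotangent-line contributions and the $g$-eigenvalue on each normal direction, then show by a recursion over the attached trees that (ii) produces \emph{exactly} the Adams operation $\Psi^r$ and (iii) produces \emph{exactly} $\delta_\zeta$, with no residual correction. This is where the multiplicativity of $\Psi^r$ under gluing, and the compatibility of the KRR eigenvalue decomposition with the tail projection $\pi_+^{1-\zeta q}$, do the real work; I would follow the inductive argument of \cite[\S~8]{Givental_Tonita_2011} to finish.
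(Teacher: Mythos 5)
The paper offers no proof of this proposition at all: it is imported as a quoted result from Givental--Tonita \cite[\S~8]{Givental_Tonita_2011}, which is precisely the argument you sketch. Your outline — stem/leg/tail decomposition of the Kawasaki stratum, quotient of the stem to a two-orbifold-point curve in $[X/\mathbb{Z}_r]$ with degree multiplied by $r$, the cotangent root $L^{1/r}$ carrying eigenvalue $\zeta$ at the first point, the freely permuted legs producing $\Psi^r$ of the fake head, and the far twisted end packaging $\pi_+(J^K)$ plus tails into $\delta_\zeta$ — is a faithful reconstruction of that cited proof, so it is essentially the same approach.
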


\begin{remark} \label{r:2.5}
Since $L^{1/r}$ is unipotent on a fixed moduli, the following expansion
\[
\frac{1}{1-q\zeta L^{1/r}} = \sum_{i \geq 0} \frac{ (\zeta q)^i (L^{1/r} -1)^i }{ (1-\zeta q)^{i+1} }
\]
is a finite sum. When combined with the VKHRR for $\M_{0,n}(X,\beta)$, this expresses $J^{\KK}(\mathbf{t},q) -(1-q) - \mathbf{t}(q)$ as a sum of rational functions with poles only at roots of unity.
\end{remark}

\begin{theorem}[Adelic characterization, {\cite[\S~6]{Givental_Tonita_2011} }] \label{thm_adelic_charac}
$f\in \mathcal{L}^K$ if and only if
\begin{itemize}
    \item[(i)]$f|_{\mathcal{K}_-}$ has poles only at roots of unity.
%\marginnote{$\pi_+ (f) \in \K^{\fake}_+$?}
    \item[(ii)] $f|_{q=1} = J^{\fake} \left( -(1-q) + \pi_+(f)|_{q=1} + \arm(f) \right)$.
%\marginnote{$[...]^{\mathrm{stem}_{\zeta}} = [...]^{X_{\zeta}} $?}
    \item[(iii)] Let $\zeta$ be any primitive $r$-th roots of unity with $r\geq 2$. We have
    \[
    \begin{split}
        & f|_{q=\zeta^{-1}} = \delta_{\zeta}(f; \zeta q)
        \\
        &+ \sum_{n,\beta,\alpha} \frac{Q^{r\beta} \Phi^{\alpha}}{n!} \Big[ \frac{\Phi_{\alpha}}{1-\zeta q L^{1/r}}, \leg_{r}(L) ,\dots, \leg_{r}(L), \delta_{\zeta}(f;L^{1/r}) \Big]^{{X}_{\zeta}}_{0,n+2,\beta}.
    \end{split}
    \]
\end{itemize}
\end{theorem}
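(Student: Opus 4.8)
The plan is to prove both implications with the virtual Kawasaki--Riemann--Roch decomposition of $J^K$ as the common engine, exploiting the \emph{adelic} viewpoint: an element of $\K = K^0(X)(q)\otimes\Lambda$ is a rational function of $q$, hence by partial fractions is determined by its polynomial part $\pi_+$ together with its Laurent expansions $f|_{q=\zeta}$ at its poles. Condition~(i) confines those poles to roots of unity, so the relevant ``places'' form a fixed list, and conditions~(ii)--(iii) prescribe the expansion at each place. I will show that a genuine $J^K(\mathbf{t})$ realizes this prescription (forward direction) and that the prescription admits a unique solution, which must then lie on $\mathcal{L}^K$ (backward direction).

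For the forward direction I would start from
\[
J^{\KK}(\mathbf{t}) = (1-q) + \mathbf{t}(q) + \sum_{n,\beta,\alpha}\frac{Q^{\beta}\Phi^{\alpha}}{n!}\sum_{\zeta}\langle \frac{\Phi_{\alpha}}{1-qL},\mathbf{t}(L),\dots,\mathbf{t}(L)\rangle^{X_{\zeta}}_{0,n+1,\beta},
\]
the inner sum running over roots of unity. Condition~(i) is immediate from Remark~\ref{r:2.5}, since unipotence of $L^{1/r}$ turns each $\zeta$-summand into a rational function with a pole only at $\zeta$. Since the correlator terms lie in $\K_-$, one has $\pi_+(J^K(\mathbf{t})) = (1-q)+\mathbf{t}$, so $-(1-q)+\pi_+(f)|_{q=1}=\mathbf{t}$ and condition~(ii) becomes exactly Proposition~\ref{prop_head_fake}, once $\arm(f)$ is identified with the geometric $\arm$ as in Remark~\ref{r:2.2}. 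For condition~(iii), expanding at $q=\zeta^{-1}$ isolates the $X_{\zeta}$-contribution, which Proposition~\ref{Prop_Stem_inv} rewrites as the stated stem integral with insertions $\leg_r(L)$ and $\delta_{\zeta}(J^K;L^{1/r})$; the regular contributions of the strata $\zeta'\neq\zeta$ together with the polynomial part assemble, by the definition in \eqref{e:2.2}, into $\delta_{\zeta}(f;\zeta q)$.

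For the backward direction, suppose $f$ satisfies (i)--(iii). Set $\mathbf{t}:=\pi_+(f)-(1-q)\in\K_+$ and $h:=f-J^K(\mathbf{t})$. By the forward direction $J^K(\mathbf{t})$ also satisfies (i)--(iii) and shares the $\K_+$-part of $f$, so $h\in\K_-$ with poles only at roots of unity. Because a proper rational function regular at $0$, vanishing at $\infty$, with poles only at roots of unity equals the sum of its principal parts there, it suffices to show every principal part of $h$ vanishes, i.e.\ that each $h|_{q=\zeta}$ has no negative part. I would prove this by induction on the Novikov degree $\beta$ under the partial order on effective classes: subtracting conditions~(ii) and~(iii) for $f$ and for $J^K(\mathbf{t})$ expresses $h|_{q=1}$ and $h|_{q=\zeta^{-1}}$ through the differences $\arm(f)-\arm(J^K(\mathbf{t}))$ and $\delta_{\zeta}(f)-\delta_{\zeta}(J^K(\mathbf{t}))$ and through the $\leg_r$ insertions. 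The stem terms carry $Q^{r\beta}$ with $r\geq2$ and $\leg_r=\Psi^r(\cdots)$ rescales $Q^{\beta'}\mapsto Q^{r\beta'}$, so all feedback contributing at a fixed Novikov degree comes from strictly smaller degrees; the degree-$\beta$ equations therefore read $h|_{q=\zeta}\equiv0$ modulo already-vanishing lower-order data, closing the induction and giving $h=0$.

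The main obstacle is precisely this backward induction: verifying that the self-referential conditions~(ii)--(iii) form a \emph{well-founded} recursion rather than a circular constraint. One must track how $\arm(f)$, $\leg_r$, and $\delta_{\zeta}(f)$ depend on $f$ and confirm that at each Novikov degree the only genuinely new appearance of $h$ is the linear term being solved for, every other occurrence living at strictly smaller degree or being Adams-rescaled from it. The delicate case is the same-degree feedback through the \emph{classical} ($\beta=0$) fake and stem contributions in condition~(ii), which likely requires a secondary induction on the number of marked points (or on the nilpotent filtration of the classical pairing) to resolve; combined with the partial-fractions reconstruction lemma, this legitimizes passing from ``all local expansions vanish'' to the global conclusion $h=0$.
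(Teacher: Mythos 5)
Note first that the paper does not prove this theorem at all: it is imported verbatim from Givental--Tonita \cite[\S~6]{Givental_Tonita_2011}, and the only argument the paper supplies is the remark that its formulation of criteria (ii) and (iii) is equivalent to the original one ((ii) being $f|_{q=1}\in\mathcal{L}^{\fake}$, (iii) being membership in the twisted tangent spaces $T^r$, reformulated via the stem equality). Your forward direction is essentially that remark combined with Propositions~\ref{prop_head_fake} and \ref{Prop_Stem_inv}, and is fine as far as it goes. The substance of the theorem is the converse, and there your induction has a genuine gap: it is not well-founded as stated.

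Concretely, set $h:=f-J^K(\mathbf{t})\in\K_-$ and look at the degree-$\beta$ equations assuming $h_{\beta'}=0$ for $\beta'<\beta$. Your claim that ``all feedback contributing at a fixed Novikov degree comes from strictly smaller degrees'' fails in two places, both at degree exactly $\beta$. First, in (ii), $\arm(f)-\arm(J^K(\mathbf{t}))=\pi_+^{\fake}\bigl(h|_{q=1}\bigr)$ by \eqref{e:2.2}; this is nonzero at degree $\beta$ whenever $h_\beta$ has principal parts at roots of unity $\zeta\neq 1$ (those are regular at $q=1$ and feed the fake-positive part), and it enters the right side of (ii) through \emph{degree-zero} fake correlators, so projecting the subtracted equation to $\K_+^{\fake}$ gives the tautology $\pi_+^{\fake}(h_\beta|_{q=1})=\pi_+^{\fake}(h_\beta|_{q=1})$ rather than $h_\beta|_{q=1}=0$. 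Second, in (iii), since $\pi_+(f)=\pi_+(J^K(\mathbf{t}))$ the difference $\delta_\zeta(f;\cdot)-\delta_\zeta(J^K;\cdot)$ reduces to the same-degree quantity $\tail_\zeta(h_\beta)$, which is determined by the principal parts of $h_\beta$ at the \emph{other} roots of unity, and it appears both as the leading term and inside the $\beta'=0$ stem correlators. (By contrast $\leg_r$ is defined from $J^K(0)$, not from $f$, so your Adams-rescaling observation, while true, addresses the harmless term.) Thus at fixed $\beta$ you face a coupled linear system relating all the principal parts $\{PP_\zeta(h_\beta)\}_\zeta$ with classical-correlator coefficients, and the theorem is exactly the assertion that this system has only the trivial solution. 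Your proposed fix --- a secondary induction on marked points or ``the nilpotent filtration of the classical pairing'' --- is only gestured at and does not obviously terminate, because the coupling runs through regular parts of expansions at distinct roots of unity, not through a marked-point grading. Establishing this invertibility is the actual content of Givental--Tonita's proof (their identification of the local constraints with tangent spaces of $\mathcal{L}^{\fake}$ and the resulting parametrization of solutions by $\K_+$), and it cannot be obtained by the partial-fractions bookkeeping alone; for the purposes of this paper the correct move is the one the authors make, namely to cite the theorem and verify only the equivalence of formulations.
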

\begin{remark}
    Our formulation of criteria (ii) and (iii) in adelic characterization might appear different from the ones in \cite[\S~6]{Givental_Tonita_2011}. They are actually equivalent.

    The criterion (ii), $f|_{q=1} \in \mathcal{L}^{\fake}$, in \cite[\S~6]{Givental_Tonita_2011} is equivalent to our formulation since if $f\in \mathcal{
    L}^K$, then 
    \[
    f|_{q=1} = J^{\fake}( -(1-q) + \pi_+^{\fake}(f) ) = J^{\fake}( -(1-q) + \pi_+(f) + \arm(f) ).
    \]
    Conversely, $J^{\fake}(-(1-q) + \pi_+(f) + \arm(f)) \in \mathcal{L}^{\fake}$ is clear.
    
    The criterion (iii) in \cite[\S~6]{Givental_Tonita_2011} is formulated as $f|_{q=\zeta^{-1}} \in T^r$. Here $T^r$ is some tangent space of $\mathcal{L}^{\fake}$ twisted by some characteristic classes and with suitable change of variable. This criterion is exactly the reformulation of the stem theory computation, see the Corollary in \cite[\S~8]{Givental_Tonita_2011} for detail. Our formalism is exactly the corresponding stem theory equality.
\end{remark}

\section{Proof of $\tilde{J} =J^K$}  \label{s:3} 

In this section, we use adelic characterization (Theorem~\ref{thm_adelic_charac}) to show that $\tilde{J}$ defined in \eqref{e:jtilde} $\in \mathcal{L}^K$. It follows from a uniqueness theorem that the mirror transformation is trivial and $\tilde{J} = J^K$. Combined with Proposition~\ref{p:1.5} this concludes the proof of the Main Theorem.

\subsection{Outline of the proof}
Here we provide an outline and sign posts of the proof.

\textbf{(1)}. In Section~\ref{s:3.2} we recall basic facts about (twisted) Gromov--Witten invariants on Calabi-Yau threefold. In particular, a vanishing theorem which was used in Proposition~\ref{p:1.5} is proved.

\textbf{(2)}. To check the adelic characterization,  $\tilde{J}$ is expanded in partial fractions with poles at $(1-\zeta q)$ for all roots of unity $\zeta$. Note that the orders of poles are $\leq 3$ by definition of $\Tilde{J}$.
% \[
% c(Q) + \sum_{\beta,d,r,\zeta} Q^{rd\beta} \Big( \frac{e_{\beta,d,\zeta}}{1-\zeta q} + \frac{f_{\beta,d,\zeta}}{(1-\zeta q)^2} + \frac{g_{\beta,d,\zeta}}{(1-\zeta q)^3} \Big),
% \]
% where $e_{\beta,d,\zeta}$, $f_{\beta,d,\zeta}$, $g_{\beta,d,\zeta} \in K^0(X)$ and $c(Q) \in \mathbf{K}$. 
The computation is divided into four parts in Lemmas~\ref{lemma_a_expansion}, \ref{lemma_b_expansion}, \ref{lemma_c_expansion}, and \ref{lemma_d_expansion} for $a(r,q^r)$, $b(r,q^r)$, $c(r,q,\beta_t)$, and $d(r,q,\beta_t)$ terms respectively.

\textbf{(3)}. Motivated by Proposition~\ref{prop_head_fake} and Proposition~\ref{Prop_Stem_inv}, we emulate the geometric situation to formulate the following functions for the $\tilde{J}$ 
\[
\begin{split}
    \widetilde{\arm} (q) &:= \arm( \tilde{J}; q ) \in \mathbf{K}[\![ 1-q ]\!],
    \\
    \widetilde{\leg_r}(q) & := \Psi^r \Big( \widetilde{\arm} (q)|_{t=0} \Big) \in \mathbf{K}[\![ 1-q ]\!],
    \\
    \widetilde{\tail}_{\zeta} (q) & := \tail_{\zeta}(\tilde{J};q) \in \mathbf{K}[\![ 1-\zeta q ]\!], \\
    \tilde{\delta}_{\zeta}(q) &:= \delta_{\zeta} ( \tilde{J}; q ) \in \mathbf{K}[\![ 1- q ]\!],
\end{split}
\]
in Section~\ref{s:3.4}. They serve as bridges for comparison with the corresponding functions associated to $J^K$ from the Kawasaki strata.

\textbf{(4)}. In Proposition~\ref{prop_J_q=1} and Proposition~\ref{prop_Jzeta_tnot0}, we show that $\widetilde{\arm}$, $\widetilde{\leg}$ and $\widetilde{\tail}$ satisfy the properties of their geometric counterparts in Propositions~\ref{prop_head_fake} and \ref{Prop_Stem_inv}. 
We then apply the adelic characterization (Theorem~\ref{thm_adelic_charac}) to conclude $\tilde{J} \in \mathcal{L}^K$, and moreover $\tilde{J} = J^K(t)$.

\subsection{Twisted Gromov--Witten theory on CY3} \label{s:3.2}
Recall the notation in Part I of the \emph{stem invariants}, which are important organizing ingredients in the virtual Hirzebruch--Riemann--Roch for stacks \cite{Givental_Tonita_2011} \cite{Chou_Lee_2022_QKGVI} 
\[
\begin{split}
    & \Big[ T_1(L), T(L),\dots, T(L), T_{n+2}(L) \Big]^{X_{\zeta}}_{0,n+2,\beta}
    \\
    := & \int_{[\M_{0,n+2,\beta}^X(\zeta)]^{\vir}} \td( T_{\M} ) \ch \left( 
    \frac{ \ev_1^* (T_1(L)) \ev_{n+2}^* (T_{n+2}(L)) \prod_{i=2}^{n+1} \ev_i^* T(L) }
    {\Tr (\Lambda^* N^*_{\M})}  \right).
\end{split}
\]
This can be interpreted as \emph{twisted} (cohomological) GW invariants with the twisting class
\[
\td (T_{\M}) \ch \Big( \frac{1}{\Tr (\Lambda^* N^*_{\M})} \Big).
\]
Such twisting classes come from the deformation theory of the moduli of stable maps and consist of three parts. See e.g., \cite[Section~8]{Givental_Tonita_2011}.
\begin{enumerate}
    \item[type $A$.] $\displaystyle \td(\pi_*^K \ev^*(TX))\prod_{k=1}^{r-1} \td_{\zeta^k} (\pi_*^K \ev^*(T_X \otimes \mathbb{C}_{\zeta^k})) $, where $\pi: \mathcal{C} \rightarrow \M$ and $\ev: \mathcal{C} \rightarrow X/ \mathbb{Z}_r$ form the universal (orbifold) stable map diagram. $\mathbb{C}_{\zeta^k}$ is the line bundle over $B\mathbb{Z}_r$ with $g$ acts by $\zeta^k$, and for any line bundle $l$, define the invertible multiplicative characteristic classes
    \[
    \td(l) := \frac{c_1(l)}{1-e^{-c_1(l)}}, \quad \td_{\lambda}(l) := \frac{1}{1-\lambda e^{-c_1(l) }}.
    \]
    \item[type $B$.] $\displaystyle \td(\pi_*^K(-L^{-1})) \prod_{k=1}^{r-1} \td_{\zeta^k} (\pi_*^K(-L^{-1} \otimes \ev^* (\mathbb{C}_{\zeta^k}) ))$, where $L=L_{n+3}$ is the universal cotangent line bundle of 
    \[
    \mathcal{C} \cong \M_{0,n+3}^{X/\mathbb{Z}_r ,\beta} (g, 1,\dots,1,g^{-1},1).
    \]
    \item[type $C$.] $\displaystyle \td^{\vee}(-\pi_*^K i_* \mathcal{O}_{Z_g}) \td^{\vee}(-\pi_*^K i_*\mathcal{O}_{Z_1}) \prod_{i=1}^{k-1}\td^{\vee}_{\zeta^k}(-\pi_*^K i_*\mathcal{O}_{Z_1})$, where $Z_1$ stands for unramified nodal locus, and $Z_g$ stands for ramified one with $i: Z \rightarrow \mathcal{C}$ the embedding of nodal locus. For any line bundle $l$,
    \[
     \td^{\vee}(l) = \frac{-c_1(l)}{1-e^{c_1(l)}}, \quad \td^{\vee}_{\lambda}(l) = \frac{1}{1-\lambda e^{c_1(l) }}.
    \]
\end{enumerate}
We start with the following two observations.
\begin{lemma} \label{lemma_rpower}
\[
\td (T_{\M}) \ch \Big( \frac{1}{\Tr (\Lambda^* N^*_{\M})} \Big) =: r^{-n} + T_{0,n+2,\beta}(\zeta),
\]
where $T_{0,n+2,\beta}(\zeta)\in H^{>0}(\M^X_{0,n+2,\beta}(\zeta))$, with $0$, $n$, $\beta$, and $\zeta$ inherited from $\M = \M^X_{0,n+2,\beta}(\zeta)$.
\end{lemma}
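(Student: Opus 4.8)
The plan is to recognize the asserted constant $r^{-n}$ as the degree-zero (that is, $H^0$) part of the twisting class, and to observe that this part is controlled entirely by the virtual $g$-representation carried by the normal bundle $N_{\M}$. Since $\td(T_{\M})=1+H^{>0}$, it is enough to extract the degree-zero part of $\ch\!\big(1/\Tr(\Lambda^* N^*_{\M})\big)$. Decomposing the moving (non-invariant) part $N_{\M}=\bigoplus_{k=1}^{r-1}N^{(k)}$ by the $g$-eigenvalue $\zeta^k$ and using that $\Tr(\Lambda^*\,\cdot\,)$ is multiplicative, the degree-zero part of $\ch(1/\Tr(\Lambda^* N^*_{\M}))$ equals $\prod_{k=1}^{r-1}(1-\zeta^{-k})^{-\rho_k}$, where $\rho_k:=\op{rk}N^{(k)}$ is the virtual rank of the $\zeta^k$-eigenspace. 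The entire lemma thus reduces to the single numerical claim that $\rho_k=n$ for all $k=1,\dots,r-1$; granting this, the cyclotomic identity $\prod_{k=1}^{r-1}(1-\zeta^{k})=\lim_{x\to 1}\frac{x^r-1}{x-1}=r$ gives $\prod_{k=1}^{r-1}(1-\zeta^{-k})^{-n}=r^{-n}$, since $\{\zeta^{-k}\}$ is again the set of nontrivial $r$-th roots of unity.

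To compute the $\rho_k$ I would evaluate the virtual $g$-character of $N_{\M}$ at a generic point of the stem stratum, where the domain is a smooth genus-zero orbicurve whose $r$-fold cover $\widetilde{C}$ is $\mathbb{P}^1$ with $g$ acting by $z\mapsto\zeta z$; the two twisted markings sit at the fixed points $0,\infty$, and the $n$ untwisted markings pull back to $n$ free $\mathbb{Z}_r$-orbits. I then split the virtual tangent space of $\M_{0,nr+2}(X,r\beta)$ into its two standard pieces. The map-deformation part $\chi(\widetilde{C},\widetilde{\ev}^* T_X)$ (the source of the type $A$ classes): writing $\widetilde{\ev}=h\circ\rho$ with $\rho\colon\widetilde{C}\to C$, $\rho(z)=z^r$, the induced stem map $h\colon C\to X$, and $\rho_*\O_{\widetilde{C}}=\bigoplus_{k=0}^{r-1}L_k$ with $L_0=\O$ and $L_k\cong\O(-1)$ for $k\geq 1$, the $\zeta^k$-eigenspace is $\chi(C,h^*T_X\otimes L_k)$, of rank $\int_{\beta}c_1(T_X)+3\deg L_k+3=0-3+3=0$ by the Calabi--Yau condition; hence the map deformations contribute nothing to any $\rho_k$. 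The curve-and-markings part (the source of the type $B$ classes) is $\bigoplus_i T_{p_i}\widetilde{C}$ modulo $H^0(\widetilde{C},T_{\widetilde{C}})=\langle\partial_z,z\partial_z,z^2\partial_z\rangle$: the $n$ free orbits each contribute a copy of the regular representation of $\mathbb{Z}_r$ (rank one in every eigenvalue), the two fixed markings contribute weights $\zeta,\zeta^{-1}$, and subtracting the automorphism weights $\{\zeta,1,\zeta^{-1}\}$ leaves exactly $n$ in every nontrivial eigenvalue $\zeta^k$ and $n-1$ in the trivial one. Therefore $\rho_k=n$ for all $k$. As a check, the $g$-invariant parts, namely $3$ from the map and $n-1$ from the curve, sum to $n+2=\vdim$ of the stem space.

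The main obstacle is the bookkeeping away from this generic point: one must justify that the node-smoothing (type $C$) terms and the twisted cotangent data at the two orbifold points do not change the eigenspace ranks. I would settle this by deformation-invariance of the virtual ranks of the eigen-summands of $N_{\M}$ — they are $g$-equivariant Euler characteristics, hence locally constant on the stratum — so that the smooth-stem computation above determines them globally; alternatively, one reads off from the explicit type $C$ expression that node-smoothing parameters occur in $g$-exchanged pairs and so cancel out of each fixed eigenspace. A secondary, purely routine point is the identification $L_k\cong\O(-1)$ for the cyclic cover together with the correct eigenvalue labeling in $\Tr(\Lambda^* N^*_{\M})$; since $\prod_{k=1}^{r-1}(1-\zeta^{k})$ is unchanged under $\zeta\mapsto\zeta^{-1}$, these conventions do not affect the final value $r^{-n}$.
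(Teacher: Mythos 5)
Your proposal is correct, but it establishes the crucial step by a genuinely different route than the paper. Both arguments share the same reduction: since $\td(T_{\M})$ and all invariant-direction twistings have constant term $1$, the degree-zero part of the twisting class comes only from the normal directions and equals $\prod_{k=1}^{r-1}(1-\zeta^{\pm k})^{-\rho_k}$, where $\rho_k$ is the virtual rank of the $\zeta^k$-eigenpart of $N_{\M}$; everything hinges on $\rho_k=n$ for every $k$. The paper gets this structurally, in three lines: from the explicit type $A$, $B$, $C$ classes the normal twisting has the shape $\prod_{k=1}^{r-1}\td_{\zeta^k}\big(\pi^K_*(E\otimes\ev^*(\mathbb{C}_{\zeta^k}))\big)$ with the same virtual sheaf $E$ for every $k$, and the single dimension count $\op{rk}N_{\M}=(nr+2)-(n+2)=(r-1)n$ (this is where the Calabi--Yau condition enters for the paper) then forces each factor to contribute rank $n$. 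You instead compute each $\rho_k$ by hand at a generic stem configuration via the cyclic cover $\mathbb{P}^1\to\mathbb{P}^1$: the CY3 condition kills the map-deformation part in every moving eigenvalue ($0-3+3=0$), and the markings-minus-automorphisms bookkeeping gives exactly $n$. Your computation is longer but buys transparency: it makes explicit why the rank is equidistributed over the nontrivial eigenvalues, a point that in the paper is tacit in the assertion that $E$ is $k$-independent, and which is not a formal triviality, since pushforwards twisted by different characters have different ranks in general --- the invariant sector itself has rank $n+2$, not $n$. The cost is the globalization issue you correctly flag: local constancy of ranks only covers the connected components of $\M^X_{0,n+2,\beta}(\zeta)$ that contain smooth-domain points, so for components consisting entirely of nodal configurations you genuinely need your second remedy --- reading the ranks off the globally defined universal (type $A$--$C$) expressions --- which is in effect the paper's argument. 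With that in place your proof is complete, and your closing remark that the $\zeta\leftrightarrow\zeta^{-1}$ convention is immaterial, because $\prod_{k=1}^{r-1}(1-\zeta^{\pm k})=r$ either way, is correct.
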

\begin{proof}
Note that $N_{\M}$ has virtual dimension $(r-1)n$ since $\M$ is considered as Kawasaki strata in $\M^X_{0,nr+2,\beta}$. 

Only the twisting classes on normal bundle give nontrivial constant. It is of the following form:
\[
\prod_{k=1}^{r-1} \td_{\zeta^k} \Big( \pi^K_*(  E \otimes \ev^* (\mathbb{C}_{\zeta^k}) )\Big),
\]
where $E$ is a virtual bundle of rank $n$. The constant term is given by
\[
\Big( \prod_{k=1}^{r-1} \frac{1}{1-\zeta^k} \Big)^n = r^{-n}.
\]

\end{proof}
\begin{lemma} \label{lemma_vanishingdeg2}
Let $X$ be a Calabi-Yau threefolds $X$, if $\deg_{\mathbb{C}} \phi_1 \geq 2$ then
\begin{equation} \label{e:3.1}
\langle \tau_{k_1}(\phi_1),\dots, \tau_{k_n}(\phi_n) \rangle_{g,n,\beta\neq 0}^{\#} =0,
\end{equation}
where $\#$ denotes cohomological GW invariants with any twisting.

In $K$-theory let $\Phi \in K(X)$. If $\deg_{\mathbb{C}} \ch (\Phi_1) \geq 2$, we have
\begin{equation} \label{e:3.2}
    \langle \tau_{k_1}(\Phi_1),\dots, \tau_{k_n}(\Phi_n) \rangle_{g,n,\beta\neq 0}^K =0.
\end{equation}
\end{lemma}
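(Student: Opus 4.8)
The plan is to reduce the $K$-theoretic statement \eqref{e:3.2} to the cohomological statement \eqref{e:3.1}, and to prove the latter by a dimension count reinforced by the string equation. The geometric fact driving everything is that on a Calabi--Yau threefold
\[
\vdim \M_{g,n}(X,\beta) = (1-g)(\dim_{\C} X - 3) + \int_{\beta} c_1(T_X) + n = n,
\]
independently of $g$ and $\beta$, since $\dim_{\C} X = 3$ and $c_1(T_X)=0$.

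For \eqref{e:3.1} I would argue as follows. Any twisting class that occurs (the type $A$, $B$, $C$ classes, or the combined class $r^{-n}+T_{0,n+2,\beta}(\zeta)$ of Lemma~\ref{lemma_rpower}) is a genuine cohomology class, hence has nonnegative degree. Capping against $[\M_{g,n}(X,\beta)]^{\vir}$, which has complex dimension $n$, a nonzero contribution therefore forces $\sum_i (\deg_{\C}\phi_i + k_i) \le n$. Since $\deg_{\C}\phi_1 \ge 2$, this gives $\sum_{i\ge 2}(\deg_{\C}\phi_i + k_i) \le n-2$; as only $n-1$ marked points remain, the pigeonhole principle produces an index $i_0\ge 2$ with $\phi_{i_0}\in H^0(X)$ and $k_{i_0}=0$, i.e. an insertion $\tau_0(1)$ of the unit class. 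I would then apply the string equation to forget the $i_0$-th point, reducing the invariant to a sum of $(n-1)$-pointed invariants of the same twisted type, in each of which the first insertion still satisfies $\deg_{\C}\phi_1\ge 2$. An induction on $n$ then finishes the proof, the base case $n=1$ being immediate because then $\deg_{\C}\phi_1 + k_1 \ge 2 > 1 = \vdim$.

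For \eqref{e:3.2} I would invoke the virtual Kawasaki--Hirzebruch--Riemann--Roch theorem to rewrite the genus-zero quantum $K$-invariant as a finite sum of twisted cohomological Gromov--Witten invariants over the Kawasaki (inertia/stem) strata. Because the Chern character commutes with pullback,
\[
\ch\big(\ev_1^*(\Phi_1)\,L_1^{k_1}\big) = \ev_1^*\ch(\Phi_1)\cdot e^{k_1 c_1(L_1)},
\]
where $\ch(\Phi_1)\in H^{\ge 4}(X)$ by hypothesis and the factor $e^{k_1 c_1(L_1)}=1+\dots$ has constant term $1$ and so cannot lower degrees; expanding, every resulting term is a twisted cohomological invariant whose first insertion is $\tau_m(\eta)$ with $\eta\in H^{\ge 4}(X)$ and $m\ge 0$. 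Each stratum contribution is then a genus-zero twisted cohomological invariant to which \eqref{e:3.1} applies, so the total vanishes. The genus-zero hypothesis is what guarantees the strata are the stem spaces $\M^{X}_{0,\bullet}(\zeta)$ of maps to $X$ or $[X/\Z_r]$, which carry the same Calabi--Yau dimension count ($\dim = 3$, $c_1 = 0$), so that \eqref{e:3.1} is genuinely in force on each of them.

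The step I expect to be the main obstacle is the string equation used in \eqref{e:3.1}. For the \emph{untwisted} theory it is classical, and the mechanism is transparent: forgetting the $\tau_0(1)$-point $\pi_{i_0}$ gives $\pi_{i_0*}(1)=0$ because the fibre is one-dimensional. The difficulty is that the twisting classes depend on the moduli through the universal curve, the cotangent lines, and the virtual normal bundle, so forgetting a point generates $\psi$-comparison (boundary) corrections; one must verify that the type $A$, $B$, $C$ classes are compatible with the forgetful morphism up to such corrections, equivalently that the stem/twisted theory obeys a string equation. Concretely, the pulled-back part vanishes as above, while the boundary corrections must be reorganized into lower invariants of the same shape and disposed of by the induction (on $n$ and on the degree splitting of $\beta$). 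Establishing this compatibility is the technical heart of the lemma.
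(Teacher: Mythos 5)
Your proposal has genuine gaps in both halves, and in each case the paper's own proof goes a different, simpler route that avoids precisely the step you flag as problematic. For \eqref{e:3.1}, you reduce to finding a $\tau_0(1)$ insertion by pigeonhole and then need a string equation for arbitrarily twisted theories; you yourself identify the compatibility of the type $A$, $B$, $C$ twisting classes with the forgetful map as the unresolved "technical heart," and indeed those classes (built from $\pi^K_*$ of bundles on the universal curve, cotangent lines, and the virtual normal bundle) are not pulled back along forgetful maps, so this step is a real gap, not a routine verification. It is also unnecessary: the paper pushes forward along $\pi_1 : \M_{g,n}(X,\beta) \to \M_{g,1}(X,\beta)$ in a single step, keeping the twisting $T$, all $\psi$-classes and all other insertions inside $(\pi_1)_*(\cdots)$, which has nonnegative cohomological degree, and pulling back only $\ev_1^*\phi_1$ (evaluation at the first point commutes with forgetting the others). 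Since $\vdim\,\M_{g,1}(X,\beta)=1$ on a Calabi--Yau threefold while $\deg_{\C}\phi_1 \geq 2$, the projection formula and a dimension count give the vanishing outright --- no string equation, no induction on $n$.

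For \eqref{e:3.2}, the claim that "each stratum contribution is a genus-zero twisted cohomological invariant to which \eqref{e:3.1} applies" fails for the strata whose own moduli curve class is zero. In the organized Kawasaki decomposition (Propositions~\ref{prop_head_fake} and \ref{Prop_Stem_inv}), the coefficient of $Q^{\beta}$ in $J^K$ receives contributions from three-pointed, degree-zero fake and stem terms such as $\langle \Phi, \mathbf{t}+\arm, \mathbf{t}+\arm \rangle^{\fake}_{0,3,0}$ and $\big[\Phi, \leg_r(L), \delta_{\zeta}(L^{1/r})\big]^{X_{\zeta}}_{0,3,0}$, where the Novikov variables are carried entirely by the series $\arm$, $\leg_r$, $\delta_{\zeta}$ rather than by the stratum's curve class; \eqref{e:3.1} says nothing about these because it requires $\beta \neq 0$ on the moduli over which one integrates. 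This is exactly why the paper runs an induction on the Mori cone: it proves the stronger statement $\Coeff\big(\ch(J^K(\mathbf{t})), Q^{\beta}\big) \in H^{\geq 4}(X)(q)$, which simultaneously gives $\ch(\arm)$, $\ch(\leg_r)$, $\ch(\tail_{\zeta}) \in H^{\geq 4}(X)$ for all smaller classes, and then disposes of the degree-zero-moduli terms using that degree control together with the dimension constraint on the three-pointed degree-zero space. Without this induction (or an equivalent mechanism controlling the arm/leg/tail series), your reduction of \eqref{e:3.2} to \eqref{e:3.1} does not close up.
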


\begin{proof}
Let us start with the \eqref{e:3.1}
\[
 \pi_1 : \M_{g,n}(X, \beta) \to \M_{g,1} (X, \beta)
\]
be the forgetful map forgetting the last $n-1$ marked points and $T\in H^*(\M_{g,n}(X,\beta))$ be the twisting class.
By projection formula
\[
\begin{split}
\int_{[\M_{g,n}(X, \beta)]^{\vir}} T \ &\prod_{i=1}^{n} \Big(\psi_i^{k_i} \ev_i^*\phi_i\Big) 
\\
&= \int_{[\M_{g,1}(X, \beta)]^{\vir}} (\ev_1^* \phi_1 ) \ (\pi_1)_*\left( T \ \psi_1^{k_1}\prod_{i=2}^{n} \psi_i^{k_i} \ev_i^*\phi_i \right) .
\end{split}
\] 
Since $[\M_{g,1}(X,\beta)]^{\vir}$ has virtual dimension 1, while $\deg_{\mathbb{C}}(\phi_i) \geq 2$, the integral in the last equation must vanish. This shows \eqref{e:3.1}.

Similar argument for the $K$-theory, let $F\in K(\overline{M}_{g,n}(X,\beta))$ be the twisting bundle. By projection formula
\[
\begin{split}
    \chi \Big( &\overline{M}_{g,n}(X,\beta) ; \mathcal{O}^{\vir}\otimes F \otimes\Big( \otimes_{i=1}^n L_i^{d_i}\ev_i^*(\Phi_i)  \Big)\Big)
    \\
    & = \chi \Big( \overline{M}_{g,1}(X,\beta); \mathcal{O}^{\vir} \otimes \ev_1^* \Phi_1 \otimes(\pi_1^K)_*\Big(F \otimes L_1^{d_1}\otimes_{i=2}^n L_i^{k_i} \ev_i^* \Phi_1 \Big) \Big)
    =0
\end{split}
\]
where the vanishing holds for the same dimensional reason. \eqref{e:3.2} is now proved.

\end{proof}

\subsection{Expansion of $\tilde{J}$} \label{s:3.3}
In this subsection, we rewrite $\tilde{J}$ in terms of the sum of a polynomial and fractions of the form $(1-\zeta q)^{-i}$ for $\zeta$ any roots of unity and $i \in \mathbb{Z}>0$. The computation will be long and tedious. We separate it into four parts. 

We first fix some notations/conventions. $\sum_{\zeta^r=1}$ is the sum over primitive $r$-th roots of unity. For $\beta\in H_2(X,\mathbb{Z}) \setminus \{0\}$, 
\[
    \ind(\beta) := \max \Big\{ k\in \mathbb{N} \Big| \beta/k \in H_2(X,\mathbb{Z}) \Big\}.
\]
Finally for $\ind (\beta)=1$, we define
\[
    \GV_{d\beta}^{(\gamma)} := \sum_{k|d} k^{\gamma} \GV_{k\beta}.
\]
\begin{lemma} \label{lemma_a_expansion}
\[
\begin{split}
    &(1-q)\sum_{j=1}^{n_1}\Phi^{1j}\sum_{0\neq \beta\in H_2(X,\mathbb{Z})} \sum_{r=1}^{\infty} \GV_{\beta} Q^{r\beta} \beta_j a(r,q^r)
    \\
    & =(1-q)\sum_{j=1}^{n_1}\Phi^{1j}\sum_{{\rm ind}(\beta)=1} \sum_{r=1}^{\infty} \sum_{\zeta^r=1} \sum_{d=1}^{\infty}Q^{rd\beta} \beta_j \Bigg( \frac{\GV_{d\beta}^{(1)} - \frac{1}{r^2d^2} \GV_{d\beta}^{(3)} }{1-\zeta q} + \frac{\frac{1}{r^2d^2}\GV_{d\beta}^{(3)} }{(1-\zeta q)^2} \Bigg)
    \\
    & = \sum_{j=1}^{n_1}\Phi^{1j}\sum_{{\rm ind}(\beta)=1} \sum_{r=1}^{\infty} \sum_{\zeta^r=1} \sum_{d=1}^{\infty}Q^{rd\beta} \beta_j \Bigg( \zeta^{-1} \Big( \GV_{d\beta}^{(1)} - \frac{1}{r^2d^2} \GV_{d\beta}^{(3)} \Big)
    \\
    & + \frac{(1-\zeta^{-1})(\GV_{d\beta}^{(1)} - \frac{1}{r^2d^2} \GV_{d\beta}^{(3)}) + \zeta^{-1}(\frac{1}{r^2d^2}\GV_{d\beta}^{(3)}) }{1-\zeta q} + \frac{ (1-\zeta^{-1})(\frac{1}{r^2d^2} \GV_{d\beta}^{(3)} ) }{(1-\zeta q)^2}\Bigg),
\end{split}
\]
\end{lemma}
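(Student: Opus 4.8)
The plan is to prove both equalities by a single partial-fraction computation fused with a reindexing of the Novikov sum by primitivity; everything is manipulated term-by-term in the Novikov class, which is legitimate because for each fixed class $N\beta$ only finitely many summands contribute.

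First I would reindex the outer sum. Writing each nonzero effective class as $\beta' = e\beta$ with $\beta$ primitive and $e = \ind(\beta')$, and noting $\beta'_j = e\beta_j$ and $\GV_{\beta'} = \GV_{e\beta}$, the left-hand side becomes
\[
(1-q)\sum_{j}\Phi^{1j}\sum_{\ind(\beta)=1}\sum_{e\geq 1}\sum_{s\geq 1}\GV_{e\beta}\,Q^{se\beta}\,e\beta_j\,a(s,q^s).
\]
The core step is the partial fraction of $a(s,q^s)$. I would expand locally at each pole $q=\zeta^{-1}$ with $\zeta^s=1$, setting $u=1-\zeta q$ so that $1-q^s = 1-(1-u)^s = su - \binom{s}{2}u^2 + \cdots = su\bigl(1-\tfrac{s-1}{2}u+\cdots\bigr)$. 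This gives $\frac{1}{1-q^s}=\frac{1/s}{1-\zeta q}+(\text{reg})$ and $\frac{1}{(1-q^s)^2}=\frac{1/s^2}{(1-\zeta q)^2}+\frac{(s-1)/s^2}{1-\zeta q}+(\text{reg})$; since $a(s,q^s)\to 0$ at $q=\infty$ with poles only at $s$-th roots of unity, summing the simple-pole contributions $\frac{s-1}{s}+\frac{s-1}{s^2}=\frac{s^2-1}{s^2}$ yields
\[
a(s,q^s)=\sum_{\zeta^s=1}\left(\frac{(s^2-1)/s^2}{1-\zeta q}+\frac{1/s^2}{(1-\zeta q)^2}\right).
\]

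Next I would regroup by poles. A primitive $r$-th root $\zeta$ receives contributions only from those $s$ with $r\mid s$; writing $s=rm$ and collecting all terms of a fixed total degree $Q^{rd\beta}$ forces $me=d$, so $(m,e)$ runs over factorizations of $d$. Substituting $s=rm$ into the coefficients gives simple- and double-pole coefficients
\[
\beta_j\sum_{me=d}e\,\GV_{e\beta}\Bigl(1-\tfrac{1}{r^2m^2}\Bigr),\qquad \beta_j\sum_{me=d}e\,\GV_{e\beta}\tfrac{1}{r^2m^2},
\]
and with $m=d/e$ the divisor sums collapse, via $\sum_{e\mid d}e\GV_{e\beta}=\GV^{(1)}_{d\beta}$ and $\sum_{e\mid d}e^3\GV_{e\beta}=\GV^{(3)}_{d\beta}$, to $\GV^{(1)}_{d\beta}-\frac{1}{r^2d^2}\GV^{(3)}_{d\beta}$ and $\frac{1}{r^2d^2}\GV^{(3)}_{d\beta}$. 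This is exactly the first claimed identity.

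Finally, for the second equality I would clear the prefactor by writing $1-q=(1-\zeta^{-1})+\zeta^{-1}(1-\zeta q)$ and distributing over $\frac{P}{1-\zeta q}+\frac{S}{(1-\zeta q)^2}$ with $P=\GV^{(1)}_{d\beta}-\frac{1}{r^2d^2}\GV^{(3)}_{d\beta}$ and $S=\frac{1}{r^2d^2}\GV^{(3)}_{d\beta}$; this produces a $q$-constant term $\zeta^{-1}P$, a simple pole $(1-\zeta^{-1})P+\zeta^{-1}S$, and a double pole $(1-\zeta^{-1})S$, matching the stated expression. The main obstacle is the bookkeeping in the regrouping step: one must keep track that $\sum_{\zeta^r=1}$ ranges over \emph{primitive} roots only (so each root of unity is counted once across all $r$) and correctly identify the two divisor sums with $\GV^{(1)}$ and $\GV^{(3)}$; the partial-fraction coefficients themselves, particularly the residual simple pole $(s-1)/s^2$ hidden inside the double-pole term, also require care.
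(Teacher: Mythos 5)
Your proposal is correct and follows essentially the same route as the paper: the paper likewise expands $\frac{1}{1-q^s}$ and $\frac{1}{(1-q^s)^2}$ into principal parts at all roots of unity (your local computation with $u=1-\zeta q$ is exactly the derivation of the expansions the paper states), reindexes the Novikov sum by primitive classes and divisors to collapse onto $\GV^{(1)}_{d\beta}$ and $\GV^{(3)}_{d\beta}$, and obtains the second equality from the same split $1-q=(1-\zeta^{-1})+\zeta^{-1}(1-\zeta q)$. The only difference is organizational: the paper extracts the coefficient of $Q^{M\beta'}/(1-\zeta q)$ for a fixed primitive root $\zeta$, while you regroup the full sum at once, which is equivalent.
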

\begin{proof}
For the first equality, we use the following expansions:
\[
\begin{split}
    \frac{1}{1-q^r} &= \sum_{k|r}\sum_{\zeta^k=1} \frac{1}{r(1-\zeta q)};
    \\
    \frac{1}{(1-q^r)^2} &= \sum_{k|r}\sum_{\zeta^k=1} \Big( \frac{1}{r^2(1-\zeta q)^2} + \frac{r-1}{r^2(1-\zeta q)} \Big).
\end{split}
\]
Let $\zeta$ be any primitive $r$-th roots of unity with $r|M$ and $\ind(\beta')=1$. We compute
\[
\begin{split}
    &\Coeff\Big(\sum_{0\neq \beta\in H_2(X,\mathbb{Z})} \sum_{r=1}^{\infty} \GV_{\beta} Q^{r\beta} a(r,q^r), \frac{Q^{M\beta'}}{1-\zeta q} \Big)
    \\
    & =\Coeff \Bigg(  \sum_{k| \frac{M}{r}}\Big( \frac{ \frac{M}{rk}(rk-1) }{1-q^{rk}} + \frac{ \frac{M}{rk} }{(1-q^{rk})^2} \Big) \GV_{M\beta' /rk} ,  \frac{1}{1-\zeta q}  \Bigg)
    \\
    & = \Coeff \Bigg( \sum_{k|\frac{M}{r}} \Big( \frac{\frac{M}{rk}(rk-1)}{rk(1-\zeta q)} + \frac{\frac{M}{rk}(rk-1)}{(rk)^2(1-\zeta q)} \Big)\GV_{M\beta'/rk}, \frac{1}{1-\zeta q}   \Bigg)
    \\
    & = \GV_{M\beta'/r}^{(1)} - \frac{1}{M^2} \GV_{M\beta'/r}^{(3)}.
\end{split}
\]
The computation for the coefficient of $\frac{1}{(1-\zeta q)^2}$ is similar. 

For the second equality, we write $1-q = (1-\zeta^{-1}) + \zeta^{-1}(1-\zeta q)$. This finish the computation.
\end{proof}
\begin{lemma} \label{lemma_b_expansion}
\[
\begin{split}
    &(1-q)\Phi^{01} \sum_{0\neq \beta \in H_2(X,\mathbb{Z})} \sum_{r=1}^{\infty} \GV_{\beta} Q^{r\beta} b(r,q^r)
    \\
    & =(1-q)\Phi^{01}\sum_{{\rm ind}(\beta)=1} \sum_{r=1}^{\infty} \sum_{\zeta^r=1} \sum_{d\geq 1} Q^{rd\beta} 
    \\
    &\hspace{20mm} \Bigg( \frac{rd \GV^{(-1)}_{d\beta} - \frac{1}{r^3d^3}\GV_{d\beta}^{(3)}}{1-\zeta q} 
     + \frac{ \frac{3\GV_{d\beta}^{(3)}}{r^3d^3} }{(1-\zeta q)^2} + \frac{ \frac{-2\GV_{d\beta}^{(3)}}{r^3d^3} }{(1-\zeta q)^3}  \Bigg)
    \\
    &= \Phi^{01}\sum_{{\rm ind}(\beta)=1} \sum_{r=1}^{\infty} \sum_{\zeta^r=1} \sum_{d\geq 1} Q^{rd\beta} \Bigg( \zeta^{-1} \Big( rd \GV^{(-1)}_{d\beta} - \frac{1}{r^3d^3}\GV_{d\beta}^{(3)} \Big)
    \\
    & \hspace{20mm} + \frac{ (1-\zeta^{-1}) (rd \GV^{(-1)}_{d\beta} - \frac{1}{r^3d^3}\GV_{d\beta}^{(3)}) + \zeta^{-1}( \frac{3\GV_{d\beta}^{(3)}}{r^3d^3} ) }{1-\zeta q} 
    \\
    & \hspace{20mm} + \frac{(1-\zeta^{-1})(\frac{3\GV_{d\beta}^{(3)}}{r^3d^3}) + \zeta^{-1}( \frac{-2\GV_{d\beta}^{(3)}}{r^3d^3} ) }{(1-\zeta q)^2} + \frac{(1-\zeta^{-1})(\frac{-2\GV_{d\beta}^{(3)}}{r^3d^3} )}{(1-\zeta q)^3}\Bigg).
\end{split}
\]
\end{lemma}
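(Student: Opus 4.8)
The plan is to mirror the proof of Lemma~\ref{lemma_a_expansion}, since the only genuinely new ingredient here is the partial-fraction expansion of the cubic term $\frac{1}{(1-q^r)^3}$ at roots of unity; everything else is the same reindexing and the same formal manipulation of the overall $(1-q)$ factor. First I would establish the cubic expansion. Fix a primitive $k$-th root of unity $\zeta$ with $k\mid r$ and set $u := 1-\zeta q$, so that $q^r=(1-u)^r$ and hence
\[
1-q^r = 1-(1-u)^r = ru\Big(1 - \tfrac{r-1}{2}u + \tfrac{(r-1)(r-2)}{6}u^2 - \cdots\Big).
\]
Inverting the parenthetical series and cubing produces the principal part of $\frac{1}{(1-q^r)^3}$ at $\zeta$, namely
\[
\frac{1}{r^3(1-\zeta q)^3} + \frac{3(r-1)}{2r^3(1-\zeta q)^2} + \frac{(r-1)(2r-1)}{2r^3(1-\zeta q)}.
\]
Since $\frac{1}{(1-q^r)^3}$ is a rational function vanishing at $q=\infty$ with poles only at the $r$-th roots of unity, it equals the sum of these principal parts, organized as $\sum_{k\mid r}\sum_{\zeta^k=1}$, exactly as with the lower-order expansions recorded in the proof of Lemma~\ref{lemma_a_expansion}.

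Next I would substitute this expansion, together with those of $\frac{1}{1-q^r}$ and $\frac{1}{(1-q^r)^2}$ already in hand, into $b(r,q^r)=\frac{r^2-1}{1-q^r}+\frac{3}{(1-q^r)^2}-\frac{2}{(1-q^r)^3}$ and collect the coefficient of each $(1-\zeta q)^{-i}$. The expected pleasant cancellations occur: the coefficient of $(1-\zeta q)^{-3}$ is $-2/r^3$, that of $(1-\zeta q)^{-2}$ collapses from $\frac{3}{r^2}-\frac{3(r-1)}{r^3}$ to $3/r^3$, and a one-line computation shows the coefficient of $(1-\zeta q)^{-1}$ equals $(r^4-1)/r^3 = r - r^{-3}$.

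The third step is the reindexing. Writing each nonzero class as a multiple $e\beta$ of a primitive class $\beta$ (with $\ind(\beta)=1$) and letting $m$ denote the multiple-cover index, I would collect the coefficient of $\frac{Q^{rd\beta}}{1-\zeta q}$ for $\zeta$ a primitive $r$-th root of unity. The divisibility constraint $r\mid m$ forces $e\mid d$ and $m=rd/e$, so that coefficient becomes $\sum_{e\mid d}\GV_{e\beta}\big(\tfrac{rd}{e}-\tfrac{e^3}{r^3d^3}\big) = rd\,\GV^{(-1)}_{d\beta} - \tfrac{1}{r^3d^3}\GV^{(3)}_{d\beta}$ by the definition $\GV^{(\gamma)}_{d\beta}=\sum_{k\mid d}k^\gamma\GV_{k\beta}$, and the analogous computations with $c_2=3/r^3$ and $c_3=-2/r^3$ give the stated coefficients of the higher poles. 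This produces the first displayed equality.

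Finally, the second equality is purely formal: substituting $1-q=(1-\zeta^{-1})+\zeta^{-1}(1-\zeta q)$ into the prefactor redistributes each pole of order $i$ into one of order $i$ (with factor $1-\zeta^{-1}$) and one of order $i-1$ (with factor $\zeta^{-1}$), the order-$1$ terms yielding the constant term, exactly as at the close of the proof of Lemma~\ref{lemma_a_expansion}. The only real obstacle is the bookkeeping in the cubic expansion and in the divisor-sum reindexing; both are mechanical once the principal part above is secured, and no conceptual difficulty beyond Lemma~\ref{lemma_a_expansion} arises.
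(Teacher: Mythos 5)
Your proposal is correct and follows the paper's proof essentially verbatim: the paper likewise reduces everything to the argument of Lemma~\ref{lemma_a_expansion} plus the one new partial-fraction expansion of $\frac{1}{(1-q^r)^3}$, whose stated coefficient $\frac{2r^2-3r+1}{2r^3}$ agrees with your $\frac{(r-1)(2r-1)}{2r^3}$. You have simply supplied the details the paper skips, namely the local inversion at $u=1-\zeta q$ yielding the principal parts, the collection of coefficients giving $-2/r^3$, $3/r^3$, $r-r^{-3}$, and the divisor-sum reindexing to $\GV^{(-1)}_{d\beta}$ and $\GV^{(3)}_{d\beta}$.
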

\begin{proof}
    The computation is similar to Lemma~\ref{lemma_b_expansion} with the only difference that one more expansion is used:
    \[
    \frac{1}{(1-q^r)^3} = \sum_{k|\frac{M}{r}}\sum_{\zeta^k=1} \Big( \frac{1}{r^3(1-\zeta q)^3} + \frac{3(r-1)}{2r^3(1-\zeta q)^2} + \frac{2r^2-3r+1}{2r^3(1-\zeta q)}   \Big).
    \]
    We skip the computation.
\end{proof}
Expansions for $c(r,q,\beta_t)$ and $d(r,q,\beta_t)$ can be computed using the same method.
\begin{lemma} \label{lemma_c_expansion}
\[
\begin{split}
    &(1-q)\sum_{j=1}^{n_1}\Phi^{1j}\sum_{0\neq \beta \in H_2(X,\mathbb{Z})} \sum_{r=1}^{\infty} \GV_{\beta} Q^{r\beta} \beta_j c(r,q,\beta_t)
    \\
    & = \sum_{j=1}^{n_1}\Phi^{1j} \sum_{{\rm ind}(\beta)=1} \sum_{d=1}^{\infty} \beta_j\beta_t 
    \\
    & \hspace{10mm} \cdot\Bigg[ \sum_{r\geq 2}\sum_{\zeta^r=1} Q^{rd\beta} \Bigg( \frac{ \frac{\GV_{d\beta}^{(3)}}{r^2d^2} }{1-\zeta q} \Bigg) + Q^{d\beta}\Bigg(  \frac{\GV_{d\beta}^{(2)}}{2d} - \frac{\GV_{d\beta}^{(3)}}{2d^2}  + \frac{ \frac{e^{\beta_t}-1}{\beta_t} \frac{\GV_{d\beta}^{(3)}}{d^2} }{1-q} \Bigg)\Bigg].
\end{split}
\]
\end{lemma}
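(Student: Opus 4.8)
The plan is to follow verbatim the partial-fraction-and-reorganize strategy already used for Lemmas~\ref{lemma_a_expansion} and \ref{lemma_b_expansion}; the one genuinely new ingredient is the $\beta_t$-dependence of $c(r,q,\beta_t)$, and in particular the transcendental factor $e^{\beta_t}$. First I would record the pole structure of $c$ in the variable $q$: the summand $\frac{\beta_t}{r(1-q)(1-q^r)}$ has a double pole at $q=1$ (since $1-q^r$ also vanishes there) and simple poles at the remaining $r$-th roots of unity, while $\frac{e^{\beta_t}-1-\beta_t}{r^2(1-q)^2}$ contributes only a double pole at $q=1$. Thus $c$ has simple poles at every $\zeta\neq1$ with residue linear in $\beta_t$, and a double pole at $q=1$. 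The decisive observation is that the double-pole coefficient at $q=1$ is obtained by adding $\frac{\beta_t}{r^2}$ (from the first summand, using $1-q^r\sim r(1-q)$) to $\frac{e^{\beta_t}-1-\beta_t}{r^2}$ (from the second), so the linear-in-$\beta_t$ pieces cancel and the coefficient is exactly $\frac{e^{\beta_t}-1}{r^2}$. After clearing one factor of $(1-q)$,
\[
(1-q)\,c(r,q,\beta_t) = \frac{\beta_t}{r(1-q^r)} + \frac{e^{\beta_t}-1-\beta_t}{r^2(1-q)},
\]
this double pole becomes the simple $q=1$ pole $\frac{(e^{\beta_t}-1)/r^2}{1-q}$ carrying the clean exponential, whereas the $\zeta\neq1$ poles stay linear in $\beta_t$ and so never produce an exponential, matching the shape of the stated right-hand side.

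Next I would reorganize the double sum $\sum_{0\neq\beta}\sum_{r\geq1}$ into a sum over primitive classes and divisors: writing each contributing class as a positive multiple of some $\beta$ with $\mathrm{ind}(\beta)=1$, fixing the total exponent $Q^{D\beta}$, and summing over all factorizations, the divisor sums assemble into the invariants $\GV^{(\gamma)}_{d\beta}=\sum_{k\mid d}k^{\gamma}\GV_{k\beta}$ exactly as in Lemmas~\ref{lemma_a_expansion} and \ref{lemma_b_expansion}. Here I would apply the expansion $\frac{1}{1-q^r}=\sum_{k\mid r}\sum_{\zeta^k=1}\frac{1}{r(1-\zeta q)}$ to the first summand to distribute its poles over all $r$-th roots of unity. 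The simple $\zeta\neq1$ poles, being proportional to $\beta_t$, collect into the $\beta_t\,\GV^{(3)}_{d\beta}/(r^2d^2)$ coefficients; the $\zeta=1$ block yields the exponential pole $\beta_t\cdot\frac{e^{\beta_t}-1}{\beta_t}\,\GV^{(3)}_{d\beta}/d^2\cdot\frac{1}{1-q}$; and the residual polynomial-in-$q$ terms $\frac{\GV^{(2)}_{d\beta}}{2d}-\frac{\GV^{(3)}_{d\beta}}{2d^2}$ are extracted precisely as the constants $\zeta^{-1}(\cdots)$ were in the previous two lemmas, through $1-q=(1-\zeta^{-1})+\zeta^{-1}(1-\zeta q)$.

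I expect the main obstacle to be the bookkeeping at $q=1$, where the double pole and the exponential numerator conspire: the factor $e^{\beta_t}-1-\beta_t$ feeds terms of three different orders in $\beta_t$ (constant, linear, and the full transcendental part), and one must cleanly separate the genuinely transcendental $e^{\beta_t}$-contribution from the polynomial-in-$\beta_t$ remainder while tracking how $\beta_t$ is scaled through the multiple-cover reorganization, so that the primitive $\beta_t$ survives in the exponent and the linear pieces cancel as above. Once the $q=1$ block is resolved, the $\zeta\neq1$ blocks are routine and entirely parallel to Lemma~\ref{lemma_a_expansion}, so I would carry out the $q=1$ combination in detail and then simply record the rest as "the same method," as the authors do.
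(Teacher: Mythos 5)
Your overall plan (partial fractions via $\frac{1}{1-q^r}=\sum_{k|r}\sum_{\zeta^k=1}\frac{1}{r(1-\zeta q)}$, then reorganization over primitive classes into the $\GV^{(\gamma)}_{d\beta}$'s) is the method the paper intends -- it gives no details beyond saying the expansion follows ``the same method'' as Lemmas~\ref{lemma_a_expansion} and \ref{lemma_b_expansion} -- and your observation that the two summands of $c$ combine at the double pole $q=1$ so that the linear terms cancel, leaving $(e^{\beta_t}-1)/r^2$, is correct and is genuinely the heart of the computation. But there is a concrete step in your plan that fails: the origin of the $q$-independent terms. By your own first display, $(1-q)\,c(r,q,\beta_t)=\frac{\beta_t}{r(1-q^r)}+\frac{e^{\beta_t}-1-\beta_t}{r^2(1-q)}$ is a sum of \emph{proper} rational functions of $q$ (constant numerators, vanishing at $q=\infty$), so it equals the sum of its principal parts at the roots of unity and has \emph{no} polynomial part whatsoever. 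The mechanism you invoke for the constants -- redistributing an overall factor $(1-q)$ through $1-q=(1-\zeta^{-1})+\zeta^{-1}(1-\zeta q)$, as in Lemmas~\ref{lemma_a_expansion} and \ref{lemma_b_expansion} -- has nothing to act on here: in those lemmas the left-hand side is $(1-q)$ times a proper function, whereas for the $c$-term the prefactor $(1-q)$ is already absorbed into lowering the pole orders. Carried out literally, your plan produces an expansion with no constant terms, and the terms $\frac{\GV^{(2)}_{d\beta}}{2d}-\frac{\GV^{(3)}_{d\beta}}{2d^2}$ can never arise this way. Their actual source is the $O(1)$ coefficient of the Laurent expansion \emph{at} $q=1$, coming from $\frac{1}{1-q^r}=\frac{1}{r(1-q)}+\frac{r-1}{2r}+O(1-q)$ (equivalently, the value at $q=1$ of the $\zeta\neq1$ principal parts); this is exactly the $\pi_+^{\fake}$-data that feeds $\widetilde{\arm}$ in Lemma~\ref{lemma_arm_expansion}, and extracting it requires expanding at $q=1$, not a global partial-fraction identity.

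The second failure point is the weight bookkeeping that you yourself flag as ``the main obstacle'' but then resolve in the wrong direction. Writing a general class as $m\beta$ with $\ind(\beta)=1$, each summand on the left carries $(m\beta)_j\,(m\beta)_t=m^2\,\beta_j\beta_t$ -- one more power of $m$ than in Lemma~\ref{lemma_a_expansion} -- so if, as you assert, ``the primitive $\beta_t$ survives,'' the $\zeta\neq1$ residues assemble into $\GV^{(4)}_{d\beta}=\sum_{k|d}k^4\GV_{k\beta}$ rather than the stated $\GV^{(3)}_{d\beta}$, and the $q=1$ pole assembles into $\sum_{m|d}m^3\GV_{m\beta}\,(e^{m\beta_t}-1)$, which is \emph{not} of the form $(e^{\beta_t}-1)\GV^{(3)}_{d\beta}$: exponentials with different arguments cannot be collected into a single one times a divisor sum. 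To land on the stated coefficients one must read $\beta_t$ throughout as the pairing of $t$ with the curve class attached to the Novikov variable of each term (so the argument fed into $c$ for the $r$-fold cover of a curve in class $\beta$ is paired against $r\beta$, and on the right-hand side against $rd\beta$, resp.\ $d\beta$); only then do the powers of $m$ come out as $m^3$, and only then does every contribution to a fixed $Q^{d\beta}$ carry the same exponential, which is also what consistency with the fake-theory side in Proposition~\ref{prop_J_q=1} (the divisor-equation factor attached to $Q^{d\beta}$) forces. So the two places where your write-up says ``routine'' or ``as the authors do'' are precisely the places where the computation does not go through as described.
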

\begin{lemma} \label{lemma_d_expansion}
\[
\begin{split}
   &(1-q) \Phi^{01} \sum_{0\neq \beta \in H_2(X, \mathbb{Z})} \sum_{r=1}^{\infty} \GV_{\beta} Q^{r\beta} d(r,q,\beta_t)
   \\
   & = \Phi^{01} \sum_{{\rm ind}(\beta)=1} \sum_{d=1}^{\infty} \Bigg[ \sum_{r\geq 2}\sum_{\zeta^r=1} Q^{rd\beta} \beta_t\Bigg( \frac{ \frac{\GV^{(1)}_{d\beta}}{rd} + \frac{\GV_{d\beta}^{(3)}}{r^3d^3} }{1-\zeta q} + \frac{ \frac{-\GV_{d\beta}^{(3)}}{r^3d^3} }{(1-\zeta q)^2}  \Bigg) 
    \\
    &\hspace{40mm}  + Q^{d\beta} \Bigg(  f_0(d,\beta_t)  + \frac{f_1(d,\beta_t)}{1-q} +  \frac{f_2(d,\beta_t) }{(1-q)^2} \Bigg) \Bigg].
\end{split}   
\] 
where
\[
\begin{split}
    f_0(d,\beta_t) &= \beta_t \Big(\frac{\GV_{d\beta}^{(0)}}{2} - \frac{5\GV_{d\beta}^{(1)}}{12d} - \frac{ \GV_{d\beta}^{(3)} }{12d^3} \Big);
    \\
    f_1(d,\beta_t) &= \frac{\beta_t \GV_{d\beta}^{(1)}}{d} + \frac{ \beta_t^2 \GV_{d\beta}^{(2)} }{2d^2} + \Big( (3-\beta_t)e^{\beta_t}-3-\beta_t - \frac{\beta_t^2}{2} \Big)\frac{ \GV_{d\beta}^{(3)}}{d^3};
    \\
    f_2(d,\beta_t) &= \Big(e^{\beta_t}(\beta_t-2) + 2 \Big)\frac{ \GV_{d\beta}^{(3)} }{d^3}.
\end{split}
\]
\end{lemma}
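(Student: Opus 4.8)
The plan is to follow the same strategy as in Lemmas~\ref{lemma_a_expansion}--\ref{lemma_c_expansion}: expand each summand in partial fractions with respect to $q$ using the root-of-unity identities for $1/(1-q^r)^i$ recalled in the proof of Lemma~\ref{lemma_a_expansion}, and then reindex the double sum over $(\beta,r)$ by writing $\beta$ in terms of its primitive class so that the $r$-dependent coefficients assemble into the combinations $\GV_{d\beta}^{(\gamma)}$. First I would split $d(r,q,\beta_t)$ into its three natural pieces: the mixed piece $\frac{\beta_t}{r(1-q)}\cdot\frac{r(1-q^r)-q^r}{(1-q^r)^2}$, the piece $\frac{\beta_t^2}{2r^2(1-q)^2}$, and the piece carrying the $e^{\beta_t}$-dependence, $\frac{(1-3q)(e^{\beta_t}-1-\beta_t-\frac{\beta_t^2}{2})+q\beta_t(e^{\beta_t}-1-\beta_t)}{r^3(1-q)^3}$.

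The key simplification is that the overall prefactor $(1-q)$ cancels the intrinsic $1/(1-q)$ in the first piece, turning it into $\beta_t\big((1+\tfrac1r)\frac{1}{1-q^r}-\frac{1}{r(1-q^r)^2}\big)$, a function of $q^r$ alone; meanwhile the remaining two pieces, after the $(1-q)$-factor, have poles only at $q=1$, of orders one and two respectively. Consequently the poles at a nontrivial primitive $r$-th root of unity $\zeta$ come solely from the first piece. Expanding $\frac{1}{1-q^r}$ and $\frac{1}{(1-q^r)^2}$ at such $\zeta$ via the identities from Lemma~\ref{lemma_a_expansion} and reindexing gives exactly the stated coefficients $\beta_t\big(\frac{\GV^{(1)}_{d\beta}/rd+\GV^{(3)}_{d\beta}/r^3d^3}{1-\zeta q}-\frac{\GV^{(3)}_{d\beta}/r^3d^3}{(1-\zeta q)^2}\big)$; this is the quickest part of the argument.

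The substantial part is the contribution at $q=1$, which is where $f_0,f_1,f_2$ are produced and where all three pieces interact. For the first piece I would compute the Laurent expansion of $\frac{1}{1-q^r}$ and $\frac{1}{(1-q^r)^2}$ at $q=1$ up to and including the constant order, using $\sum_{k=0}^{r-1}q^k=r\big(1-\tfrac{r-1}{2}(1-q)+\tfrac{(r-1)(r-2)}{6}(1-q)^2+\cdots\big)$; this yields the coefficients $\frac{r-1}{2r}$ and $\frac{r^2-1}{12r}$ whose combination produces the characteristic fractions $\tfrac{5}{12}$ and $\tfrac{1}{12}$ in $f_0$. For the last two pieces I would expand the polynomial numerators about $q=1$ via $1-3q=-2+3(1-q)$ and $q=1-(1-q)$, so that the $e^{\beta_t}$-data distribute into the simple and double poles. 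Collecting the double-pole residues at $q=1$ from the first and third pieces is the delicate step: the $+\beta_t$ coming from the third piece must cancel against the $-\beta_t$ coming from the first piece's $\frac{1}{(1-q^r)^2}$ term, leaving the clean expression $f_2=(e^{\beta_t}(\beta_t-2)+2)\GV^{(3)}_{d\beta}/d^3$. The analogous but longer bookkeeping for the simple pole gives $f_1$, with the $\frac{\beta_t^2}{2d^2}\GV^{(2)}_{d\beta}$ term originating from the second piece.

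Finally I would carry out the reindexing uniformly: for a fixed total class $d\beta$ with $\ind(\beta)=1$, summing over the original degrees $r\mid d$ against $\GV_{(d/r)\beta}$ converts each factor $1/r^{\gamma}$ into $\GV^{(\gamma)}_{d\beta}/d^{\gamma}$ (and similarly for the $\zeta\neq 1$ part, with total class $rd\beta$), matching the stated formula. The main obstacle I anticipate is organizational rather than conceptual: keeping the $e^{\beta_t}$-valued numerators attached to the correct pole orders while simultaneously tracking the constant-order Laurent coefficients of the $q^r$-functions, since it is precisely the interference of these two computations at the double pole $q=1$ that produces the non-obvious cancellations in $f_0$, $f_1$ and $f_2$.
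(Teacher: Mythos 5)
Your proposal is correct and takes essentially the same route as the paper: the paper gives no details for this lemma (it only remarks that the $c$- and $d$-expansions ``can be computed using the same method'' as Lemmas~\ref{lemma_a_expansion} and~\ref{lemma_b_expansion}, i.e.\ partial-fraction expansion of $1/(1-q^r)^i$ over roots of unity followed by reindexing the divisor sums into the $\GV^{(\gamma)}_{d\beta}$), and your computation executes exactly that plan, correctly locating the origin of each of $f_0$, $f_1$, $f_2$ and of the $\zeta\neq 1$ pole coefficients. In particular, your key structural claims check out: after the $(1-q)$-cancellation the first piece equals $\beta_t\bigl((1+\tfrac1r)\tfrac{1}{1-q^r}-\tfrac{1}{r(1-q^r)^2}\bigr)$ and is the sole source of poles at nontrivial roots of unity, the constants $\tfrac{r-1}{2r}$ and $\tfrac{r^2-1}{12r}$ do produce the fractions $\tfrac{5}{12}$ and $\tfrac{1}{12}$ in $f_0$, and the double pole at $q=1$ indeed hinges on the cancellation of $+\beta_t$ (from the third piece) against $-\beta_t$ (from the first piece), leaving $f_2=\bigl(e^{\beta_t}(\beta_t-2)+2\bigr)\GV^{(3)}_{d\beta}/d^3$.
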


\subsection{arm, leg, and tail} \label{s:3.4}
\begin{lemma} \label{lemma_arm_expansion}
\[
\begin{split}
    \widetilde{\arm} (q) = \sum_{{\rm ind}(\beta)=1} \sum_{d=1}^{\infty} Q^{d\beta} &\Bigg[ \sum_{j=1}^{n_1} \Phi^{1j} \beta_j  \Bigg( \GV_{d\beta}^{(1)} - \frac{\GV_{d\beta}^{(3)}}{d^2}  + \frac{\beta_t\GV_{d\beta}^{(2)}}{2d} - \frac{\beta_t\GV_{d\beta}^{(3)}} {2d^2} \Bigg)
    \\
    + &\Phi^{01} \Bigg( d\GV_{d\beta}^{(-1)} - \frac{\GV_{d\beta}^{(3)}}{d^3} + f_0(d,\beta_t) \Bigg) \Bigg] + O(1-q) .
\end{split}
\] 
In particular, $\ch(\widetilde{\arm} (q)) \in H^{\geq 4}(X)[\![1-q,Q]\!]$.
\end{lemma}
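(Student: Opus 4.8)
The plan is to reduce $\widetilde{\arm}$ to a sum of the principal (polar) parts of $\tilde J$ at the \emph{nontrivial} roots of unity, and then to read those parts off directly from the partial-fraction expansions already established in Lemmas~\ref{lemma_a_expansion}--\ref{lemma_d_expansion}.

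\textbf{Step 1 (a closed form for $\widetilde{\arm}$).} First I would write $\tilde J = \pi_+(\tilde J) + \pi_-(\tilde J)$, where $\pi_+(\tilde J)\in \K_+ = \mathbf{K}[q,q^{-1}]$ is the Laurent-polynomial part and $\pi_-(\tilde J)\in \K_-$ is the proper part, which by Remark~\ref{r:2.5} is a finite sum of partial fractions $c_{\zeta,i}(1-\zeta q)^{-i}$ over roots of unity $\zeta$ and $1\le i\le 3$. Since $\pi_+(\tilde J)$ is regular at $q=1$, the operator $\pi_+^{\fake}$ fixes its expansion there, so from \eqref{e:2.2}
\[
\widetilde{\arm}(q) = \pi_+^{\fake}\big(\tilde J|_{q=1}\big) - \pi_+(\tilde J)|_{q=1} = \pi_+^{\fake}\big(\pi_-(\tilde J)|_{q=1}\big).
\]
Expanding $\pi_-(\tilde J)$ at $q=1$, the $\zeta=1$ terms are purely principal and are annihilated by $\pi_+^{\fake}$, while each $\zeta\neq1$ term is regular at $q=1$ and survives. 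Hence the leading coefficient is
\[
\widetilde{\arm}(q) = \sum_{\zeta\neq 1}\sum_{i=1}^{3}\frac{c_{\zeta,i}}{(1-\zeta)^i} + O(1-q),
\]
so the polynomial part of $\tilde J$, the $\beta=0$ terms, and \emph{all} poles at $\zeta=1$ automatically drop out of the leading term.

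\textbf{Step 2 (reading off the principal parts).} The classes $c_{\zeta,i}$ with $\zeta\neq1$ are exactly the $r\geq 2$ contributions in Lemmas~\ref{lemma_a_expansion}--\ref{lemma_d_expansion}. Setting $q=1$ replaces each $(1-\zeta q)^{-i}$ by $(1-\zeta)^{-i}$, and using $\tfrac{1-\zeta^{-1}}{1-\zeta} = -\zeta^{-1}$ collapses the order-$2$ and order-$3$ poles: for example the $\Phi^{1j}$, $\beta_t$-independent contribution of $Q^{rd\beta}$ in Lemma~\ref{lemma_a_expansion} simplifies to $-\zeta^{-1}\beta_j\big(\GV^{(1)}_{d\beta} - \tfrac{1}{r^2d^2}\GV^{(3)}_{d\beta}\big)$. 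I would then sum over the primitive $r$-th roots via the Ramanujan identity $\sum_{\zeta\ \mathrm{prim.}\ r\text{-th}}\zeta^{-1} = \mu(r)$, turning each flavor into a $\mu(r)$-weighted divisor sum.

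\textbf{Step 3 (collapsing the sums, and the main obstacle).} Collecting the coefficient of a fixed $Q^{D\beta}$ with $\ind(\beta)=1$ requires summing over all factorizations $D=rd$ with $r\geq2$. Expanding $\GV^{(\gamma)}_{d\beta}=\sum_{k\mid d}k^{\gamma}\GV_{k\beta}$ and using that $\sum_{r\mid m}\mu(r)$ equals $1$ if $m=1$ and $0$ otherwise, the inner sums reduce to indicator terms, and the auxiliary monomials produced separately by the $\GV^{(1)}$ and $\GV^{(3)}$ pieces (e.g.\ a $+D\,\GV_{D\beta}$ against a $-D\,\GV_{D\beta}$) cancel in pairs, leaving precisely $\GV^{(1)}_{D\beta}-\tfrac{1}{D^2}\GV^{(3)}_{D\beta}$. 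The same mechanism applied to Lemmas~\ref{lemma_b_expansion}, \ref{lemma_c_expansion}, \ref{lemma_d_expansion} yields the $\Phi^{01}$ term $d\GV^{(-1)}_{d\beta}-\tfrac{1}{d^3}\GV^{(3)}_{d\beta}$, the $\Phi^{1j}$ $\beta_t$-linear term $\tfrac{\beta_t}{2d}\GV^{(2)}_{d\beta}-\tfrac{\beta_t}{2d^2}\GV^{(3)}_{d\beta}$, and the $\Phi^{01}$ term $f_0(d,\beta_t)$, matching the stated formula. I expect this interlocking of Ramanujan sums, Möbius inversion, and higher-order-pole cancellation to be the only genuine difficulty: each manipulation is elementary, but it must be tracked carefully across all four lemmas and all four $(\Phi,\beta_t)$-flavors.

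\textbf{Step 4 (the degree statement).} Finally, every term of $\widetilde{\arm}$ is a $\Lambda$-combination of $\Phi^{1j}$ and $\Phi^{01}$. By Lemma~\ref{lemma_degPhi}, $\ch(\Phi^{1j})\in H^{4}(X)$ and $\ch(\Phi^{01})\in H^{6}(X)$, both contained in $H^{\geq 4}(X)$; since the $(1-q)$- and $Q$-expansions introduce only scalar coefficients, the higher-order $O(1-q)$ terms are of the same shape, and $\ch(\widetilde{\arm}(q))\in H^{\geq 4}(X)[\![1-q,Q]\!]$ as claimed.
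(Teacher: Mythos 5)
Your Step 1 is correct, and it is essentially the same reduction the paper relies on; but from there the two arguments diverge. The paper's proof rests on the single identity
\[
\pi_+^{\fake}\Bigg( (1-q)\Big(\frac{\alpha}{1-\zeta q}+\frac{\beta}{(1-\zeta q)^2}+\frac{\gamma}{(1-\zeta q)^3}\Big)\Bigg)=\alpha\,\delta_{1,\zeta}+O(1-q):
\]
because of the overall factor $(1-q)$, every block with $\zeta\neq 1$ vanishes at $q=1$, so the constant term of $\widetilde{\arm}$ is read off from the $\zeta=1$ (i.e.\ $r=1$) terms of Lemmas~\ref{lemma_a_expansion}--\ref{lemma_d_expansion} with \emph{no} summation over roots of unity at all. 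You instead compute the complementary quantity, the $\zeta\neq1$ principal parts evaluated at $q=1$, and then resum. This is a legitimate route (it computes the same thing), and your execution for Lemma~\ref{lemma_a_expansion} is right: the order-$2$ poles do collapse via $\frac{1-\zeta^{-1}}{1-\zeta}=-\zeta^{-1}$, the primitive-root sum gives $\mu(r)$, and M\"obius inversion recovers $\GV^{(1)}_{D\beta}-\GV^{(3)}_{D\beta}/D^2$. But it is the ``hard direction'': you reprove by hand the cancellations that the paper's key identity renders automatic.

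The concrete gap is in Steps 2--3 for the $c$- and $d$-contributions (Lemmas~\ref{lemma_c_expansion} and \ref{lemma_d_expansion}). There the $\zeta\neq1$ terms are \emph{pure} poles, e.g.\ $\beta_t\GV^{(3)}_{d\beta}\big/\big(r^2d^2(1-\zeta q)\big)$, together with a genuine second-order pole in Lemma~\ref{lemma_d_expansion}, and their coefficients carry no $(1-\zeta^{-1})$/$\zeta^{-1}$ structure: nothing collapses, and the sum $\sum\zeta^{-1}=\mu(r)$ never enters. What the evaluation at $q=1$ actually requires is $\sum_{\zeta}\frac{1}{1-\zeta}=\frac{\varphi(r)}{2}$ over primitive $r$-th roots, the second-order analogue $\sum_{\eta^m=1,\,\eta\neq1}\frac{1}{(1-\eta)^2}=\frac{(m-1)(5-m)}{12}$, and the divisor identity $\sum_{r\mid m}\varphi(r)=m$ in place of $\sum_{r\mid m}\mu(r)=\delta_{m,1}$; these totient-type identities are exactly what produce $\frac{\beta_t\GV^{(2)}_{d\beta}}{2d}-\frac{\beta_t\GV^{(3)}_{d\beta}}{2d^2}$ and the coefficients $\frac{5}{12}$, $\frac{1}{12}$ in $f_0(d,\beta_t)$. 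Since the proposal neither names these identities nor carries out the corresponding sums, declaring ``the same mechanism'' applies, the argument as written stalls on half of the input lemmas, even though the strategy is salvageable. Step 4 is fine and matches the paper.
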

\begin{proof}
Note that 
\[
\begin{split}
    & \pi_+^{\fake} \Bigg( \zeta^{-1}\alpha + \frac{ (1-\zeta^{-1}\alpha + \zeta^{-1}\beta ) }{1-\zeta q} + \frac{ (1-\zeta^{-1})\beta + \zeta^{-1} \gamma }{(1-\zeta q)^2} + \frac{ (1-\zeta^{-1}) \gamma }{(1-\zeta q)^3} \Bigg)
    \\
    & = \pi_+^{\fake} \Bigg( (1-q) \Big(  \frac{\alpha}{1-\zeta q} + \frac{\beta}{(1-\zeta q)^2} + \frac{\gamma}{(1-\zeta q)^3}   \Big) \Bigg)
    \\
    & = \alpha \delta_{1,\zeta} + O(1-q).
\end{split}
\]
The lemma follows from Lemma~\ref{lemma_a_expansion}, Lemma~\ref{lemma_b_expansion}, Lemma~\ref{lemma_c_expansion}, and Lemma~\ref{lemma_d_expansion} by taking only $\zeta=1$ terms.
\end{proof}
\begin{lemma} \label{lemma_leg_expansion}
\[
\begin{split}
    \widetilde{\leg}_r (q) &:= \Psi^r \Big( \widetilde{\arm} (q)|_{t=0} \Big)
    \\
    & = \sum_{{\rm ind}(\beta)=1} \sum_{d=1}^{\infty} Q^{rd\beta} \Bigg[ \sum_{j=1}^{n_1}  r^2\Phi^{1j} \beta_j \Bigg( \GV_{d\beta}^{(1)} - \frac{\GV_{d\beta}^{(3)}}{d^2} \Bigg)
    \\
    &\hspace{35mm} + r^3 \Phi^{01} \Bigg( d\GV_{d\beta}^{(-1)} - \frac{\GV_{d\beta}^{(3)}}{d^3}  \Bigg) \Bigg] + O(1-q).
\end{split}
\]
In particular, $\ch(\widetilde{\leg}_r (q)) \in H^{\geq 4}(X)[\![1-q,Q]\!]$.
\end{lemma}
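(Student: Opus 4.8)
The plan is to read off $\widetilde{\leg}_r$ directly from Lemma~\ref{lemma_arm_expansion} by performing, in order, the two operations appearing in its definition: the specialization $t=0$ and then the Adams operation $\Psi^r$. No new geometric input is needed; everything is a formal consequence of the already-computed expansion of $\widetilde{\arm}$ together with the degree bounds of Lemma~\ref{lemma_degPhi}.

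First I would set $t=0$ in the formula for $\widetilde{\arm}(q)$. Since $t=\sum_j t_j\Phi_{1j}$ and $\beta_t=\sum_j t_j\beta_j$, the specialization $t=0$ forces $\beta_t=0$. Every remaining $\beta_t$-dependent contribution carries an overall factor of $\beta_t$: the terms $\tfrac{\beta_t\GV^{(2)}_{d\beta}}{2d}-\tfrac{\beta_t\GV^{(3)}_{d\beta}}{2d^2}$ in the $\Phi^{1j}$-part, and the whole of $f_0(d,\beta_t)=\beta_t\big(\cdots\big)$ in the $\Phi^{01}$-part. Hence all of these vanish, leaving
\[
\widetilde{\arm}(q)|_{t=0}=\sum_{{\rm ind}(\beta)=1}\sum_{d=1}^{\infty}Q^{d\beta}\Bigg[\sum_{j=1}^{n_1}\Phi^{1j}\beta_j\Big(\GV_{d\beta}^{(1)}-\tfrac{\GV_{d\beta}^{(3)}}{d^2}\Big)+\Phi^{01}\Big(d\GV_{d\beta}^{(-1)}-\tfrac{\GV_{d\beta}^{(3)}}{d^3}\Big)\Bigg]+O(1-q).
\]

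Next I would apply $\Psi^r$. As $\Psi^r$ is a ring homomorphism with $\Psi^r(Q^\beta)=Q^{r\beta}$ and $\Psi^r(q)=q^r$, the Novikov monomials transform as $Q^{d\beta}\mapsto Q^{rd\beta}$, while the scalars $\beta_j$ and the Gopakumar--Vafa numbers are inert. The essential point is the action on the $K$-classes: for any $\Phi$ with $\ch(\Phi)=\sum_i\ch_i(\Phi)$, $\ch_i(\Phi)\in H^{2i}(X)$, one has $\ch(\Psi^r\Phi)=\sum_i r^i\ch_i(\Phi)$. By Lemma~\ref{lemma_degPhi}, $\ch(\Phi^{1j})\in H^4(X)$ and $\ch(\Phi^{01})\in H^6(X)$ are each concentrated in a single degree, so (using that $\ch$ is rationally injective) $\Psi^r\Phi^{1j}=r^2\Phi^{1j}$ and $\Psi^r\Phi^{01}=r^3\Phi^{01}$. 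Substituting these weights gives exactly the asserted formula. The remainder stays $O(1-q)$ because $\Psi^r(1-q)=1-q^r=(1-q)(1+q+\cdots+q^{r-1})$ lies in the ideal $(1-q)$, so re-expanding at $q=1$ does not lower the order.

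Finally, the ``in particular'' claim is immediate: $\ch(\Phi^{1j})\in H^4(X)\subset H^{\geq 4}(X)$ and $\ch(\Phi^{01})\in H^6(X)\subset H^{\geq 4}(X)$, so every term of $\ch(\widetilde{\leg}_r(q))$ lies in $H^{\geq 4}(X)$; alternatively this descends from the corresponding statement in Lemma~\ref{lemma_arm_expansion}, since $\Psi^r$ preserves the cohomological grading and merely rescales each graded piece. I do not expect a genuine obstacle, as the argument is pure bookkeeping; the one point demanding care is keeping the Adams weights $r^2$ and $r^3$ straight, which rests entirely on the exact degrees furnished by Lemma~\ref{lemma_degPhi}.
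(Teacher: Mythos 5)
Your proposal is correct and follows essentially the same route as the paper: the paper's proof likewise sets $t=0$ in the expansion of $\widetilde{\arm}$ and applies $\Psi^r$, using $\Psi^r(\Phi^{ij}) = \ch^{-1}\Psi^r\ch(\Phi^{ij}) = r^{3-i}\Phi^{ij}$, which holds because $\ch(\Phi^{1j})\in H^4(X)$ and $\ch(\Phi^{01})\in H^6(X)$ are concentrated in pure degree by Lemma~\ref{lemma_degPhi}. Your additional remarks (vanishing of the $\beta_t$-proportional terms at $t=0$, and $\Psi^r$ preserving the ideal $(1-q)$) are exactly the bookkeeping the paper leaves implicit.
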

\begin{proof}
    This follows directly from 
    \[
    \Psi^r(\Phi^{ij}) := \ch^{-1} \ \Psi^r \ \ch (\Phi^{ij}) = r^{3-i} \Phi^{ij}
    \]
    for $i\leq 2$, since $\ch(\Phi^{ij}) \in H^{2(3-i)}(X)$ for $i\leq 2$ by Lemma~\ref{lemma_degPhi}.
\end{proof}
We won't need any explicit expression for $\widetilde{\tail}$ except for the fact that it has deg$_{\mathbb{C}} \geq 2$.
\begin{lemma} \label{lemma_tail_expansion}
    $\ch( \widetilde{\tail_{\zeta}}(q) ) \in H^{\geq 4}(X)[\![1-\zeta q,Q]\!]$.
\end{lemma}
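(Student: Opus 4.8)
The plan is to exploit the fact that $\tail_\zeta$ only rearranges the $q$-dependence of its argument and acts $\mathbf{K}$-linearly on the coefficients, so that it cannot lower the cohomological degree of those coefficients. First I would record a simplification of the tail operator. Writing $f = \pi_+(f) + \pi_-(f)$ with $\pi_-(f)$ the component in $\mathcal{K}_-$, note that $\pi_+(f)$ is a Laurent polynomial in $q$ and is therefore holomorphic at $q = \zeta^{-1}$ (since $\zeta \neq 0,\infty$); hence its expansion there already lies in $\mathbf{K}[\![1-\zeta q]\!]$ and is fixed by $\pi_+^{1-\zeta q}$. Consequently the two occurrences of $\pi_+(f)|_{q=\zeta^{-1}}$ in \eqref{e:2.2} cancel, leaving
\[
\widetilde{\tail}_\zeta(q) = \tail_\zeta(\tilde{J};q) = \pi_+^{1-\zeta q}\big( \pi_-(\tilde{J})|_{q=\zeta^{-1}} \big).
\]

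Next I would identify the relevant coefficients. By \eqref{e:jtilde}, every summand of $\tilde{J} - (1-q) - t$ carries a coefficient that is a scalar (possibly depending on $Q$ and $\beta_t$) multiple of one of the dual basis elements $\Phi^{1j}$ or $\Phi^{01}$: this is visible in the two $F(t)$-terms $\frac{\sum_j \Phi^{1j}\partial_{t_j}F(t)}{1-q}$ and $\frac{q\,\Phi^{01}F(t)}{(1-q)^2}$, and in the entire $\GV$-sum, whose bracket is $\sum_j \Phi^{1j}\beta_j(a+c) + \Phi^{01}(b+d)$. By Lemma~\ref{lemma_degPhi} one has $\ch(\Phi^{1j}) \in H^{4}(X)$ and $\ch(\Phi^{01}) \in H^{6}(X)$, so $\ch$ of each such coefficient lies in $H^{\geq 4}(X)$. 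Thus $\tilde{J} - (1-q) - t = R$ with $\ch(R) \in H^{\geq 4}(X)(q)$, and since the two purely polynomial terms $(1-q)$ and $t$ lie in $\mathcal{K}_+$, they contribute nothing to $\pi_-(\tilde{J}) = \pi_-(R)$.

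Finally I would combine these observations: the projection $\pi_-$, the expansion $|_{q=\zeta^{-1}}$, and the projection $\pi_+^{1-\zeta q}$ are all $\mathbf{K}$-linear and act only on the $q$-variable, leaving the cohomological degree of the $\mathbf{K}$-coefficients untouched. Applying them to $R$ therefore preserves membership of $\ch$ in $H^{\geq 4}(X)$, which gives $\ch(\widetilde{\tail}_\zeta(q)) \in H^{\geq 4}(X)[\![1-\zeta q, Q]\!]$, as claimed.

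The argument is essentially a bookkeeping of cohomological degree, so there is no serious analytic obstacle. The main point requiring care is the cancellation in the first step, namely checking that the polynomial parts of the partial-fraction expansions (the $\zeta$-dependent constant terms produced in Lemmas~\ref{lemma_a_expansion}--\ref{lemma_d_expansion}) do not survive in the tail; these lie in $\mathcal{K}_+$ and are removed by the subtraction of $\pi_+(\tilde{J})|_{q=\zeta^{-1}}$, while everything that does survive still carries a $\Phi^{1j}$ or $\Phi^{01}$ coefficient and hence remains in degree $\geq 4$.
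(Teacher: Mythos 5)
Your proof is correct and follows essentially the same route as the paper's: both unfold the definition of $\widetilde{\tail}_{\zeta}$ so that only $\tilde{J}-(1-q)-t$ matters, and then reduce the claim (via Lemma~\ref{lemma_degPhi}) to the observation that every coefficient of $\tilde{J}-(1-q)-t$ is a multiple of $\Phi^{1j}$ or $\Phi^{01}$, hence has Chern character in $H^{\geq 4}(X)$, a property untouched by the $q$-variable operations $\pi_-$, expansion at $q=\zeta^{-1}$, and $\pi_+^{1-\zeta q}$. One caveat on your closing remark: the per-$\zeta$ constant terms produced in Lemmas~\ref{lemma_a_expansion}--\ref{lemma_d_expansion} are \emph{not} removed by the subtraction of $\pi_+(\tilde{J})|_{q=\zeta^{-1}}$ (that subtraction removes exactly $(1-q)+t$; the constants in question are internal to $\pi_-(\tilde{J})$ and in fact cancel among themselves for each fixed power of $Q$), but as you yourself note this is immaterial, since anything that does survive still carries a $\Phi^{1j}$ or $\Phi^{01}$ coefficient.
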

\begin{proof}
    It follows from the definition of $\widetilde{\tail}$ and the observation that
    \[
    \ch \Big( \pi_+^{1-\zeta q}( \widetilde{J} - (1-q) - t )  \Big)\in H^{\geq 4}(X)[\![ 1-\zeta q, Q]\!].
    \]
\end{proof}

\subsection{Verification of the adelic characterization} \label{s:3.5}
We show that $\tilde{J}$ satisfy Theorem~\ref{thm_adelic_charac} and hence lies on the $K$-theoretic Lagrangian cone.

\begin{proposition} \label{prop_J_q=1}
\[
\tilde{J}|_{q=1} = J^{\fake}(t + \widetilde{\arm}).
\]
\end{proposition}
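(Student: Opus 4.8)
The plan is to verify the first criterion of the adelic characterization (Theorem~\ref{thm_adelic_charac}(ii)) directly, by computing both sides as explicit power series in $1-q$ (equivalently, evaluating at $q=1$) with values in $\mathbf{K}[\![Q]\!]$, and matching them $Q^\beta$-coefficient by $Q^\beta$-coefficient. Both sides are organized by the Mori cone, so the natural device is an induction on $\beta$ with respect to the partial ordering already used in Lemma~\ref{lemma_vanishingdeg2}, reducing at each stage to the base case of primitive $\beta$ (where only fake/degree-zero data enter).

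\textbf{First,} I would evaluate the left-hand side $\tilde J|_{q=1}$. Since $\tilde J$ has poles only at roots of unity of order $\le 3$, restricting to $q=1$ means discarding the genuine $\zeta\neq 1$ polar parts and keeping the $\zeta=1$ expansions produced in Lemmas~\ref{lemma_a_expansion}--\ref{lemma_d_expansion}, together with the degree-zero ($\beta=0$) Poincar\'e-pairing terms $\frac{\sum_j \Phi^{1j}\partial_{t_j}F(t)}{1-q}+\frac{q\,\Phi^{01}F(t)}{(1-q)^2}$. The key bookkeeping is that the overall factor $(1-q)$ in \eqref{e:jtilde} kills the simple $\zeta=1$ poles while leaving finite contributions from the order-$2$ and order-$3$ poles; the resulting constant (in $1-q$) part is exactly the expression recorded in Lemma~\ref{lemma_arm_expansion} as the leading term of $\widetilde{\arm}$, plus the $t$ and $1-q$ summands. \textbf{Second,} I would expand the right-hand side $J^{\fake}(t+\widetilde{\arm})$ using the definition of the fake big $J$-function together with Proposition~\ref{prop_head_fake}, which tells me that the geometric $J^K(\mathbf t)|_{q=1}$ equals $J^{\fake}(\mathbf t+\arm)$. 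The strategy is to show that $\tilde J$ mimics the geometric situation: plugging the input $t+\widetilde{\arm}$ into $J^{\fake}$ produces the fake-theory correlators, and the fake correlators are ordinary twisted cohomological GW invariants to which the Gopakumar--Vafa relation \eqref{e:1.2} and the multiple-cover bookkeeping apply. By matching the partial-fraction data at $\zeta=1$ from Step~1 against the fake-theory expansion, the two series agree.

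\textbf{The main obstacle} I expect is the degree-zero ($\beta=0$) contribution. Unlike the $\beta\neq 0$ terms, these come purely from the Poincar\'e pairing on $X$ (Remark (1)), and the definition \eqref{e:jtilde} uses a normalization of the $\beta=0$ part that differs from \cite{Jockers_Mayr_2019}; I must check that $F(t)$ and its derivatives reproduce exactly the fake three-point function $\langle t,t,t\rangle^{X,\fake}_{0,3,0}$ and its string/dilaton-shifted descendants under the $J^{\fake}$ formula, so that the constant-map sector is consistent. A secondary subtlety is ensuring that the Adams-operation normalizations $\Psi^r(\Phi^{ij})=r^{3-i}\Phi^{ij}$ (Lemma~\ref{lemma_leg_expansion}) and the $r^{-n}$ constant from Lemma~\ref{lemma_rpower} are accounted for correctly, so that the stem contributions reassemble into the multiple-cover coefficients $a,b,c,d$ after passing through $J^{\fake}$; but since at $q=1$ only the arm ($\zeta=1$) enters, this should follow from Lemma~\ref{lemma_arm_expansion} and the vanishing in Lemma~\ref{lemma_vanishingdeg2}, which forces all $H^{\ge 4}$ insertions on the fake side to drop out.
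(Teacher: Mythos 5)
Your plan follows the same outline as the paper's proof: evaluate $\tilde J$ at $q=1$ by keeping the $\zeta=1$ parts of Lemmas~\ref{lemma_a_expansion}--\ref{lemma_d_expansion} (this is exactly how $\widetilde{\arm}$ and Lemma~\ref{lemma_arm_expansion} enter), write the fake correlators as twisted cohomological GW invariants of the CY3, convert $\GW_\beta$ to $\GV$ via \eqref{e:1.2}, and compare. (The Mori-cone induction you set up is superfluous here; in the paper it occurs only inside the proof of Lemma~\ref{lemma_vanishingdeg2}, which you can simply quote.) However, there is a genuine gap at the decisive step. You claim that Lemma~\ref{lemma_vanishingdeg2} ``forces all $H^{\ge 4}$ insertions on the fake side to drop out.'' That lemma applies only to correlators with $\beta'\neq 0$. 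The degree-zero three-point correlator containing one $\widetilde{\arm}$ insertion survives, and it is precisely what makes the proposition true: since $\ch(\widetilde{\arm})\in H^{\geq 4}(X)[\![1-q,Q]\!]$ and the product of two arm insertions lands in $H^{\geq 8}(X)=0$,
\[
J^{\fake}(t+\widetilde{\arm}) - J^{\fake}(t)
= \widetilde{\arm}
+ \sum_{\alpha} \Phi^{\alpha}\,
\Big\langle \frac{\Phi_{\alpha}}{1-qL},\, \widetilde{\arm}(L),\, t \Big\rangle^{X,\fake}_{0,3,0},
\]
where the last term is a degree-zero Poincar\'e pairing contributing only to the $\Phi^{01}/(1-q)$ component and is nonzero for $t\neq 0$. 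On the other side, the direct expansion shows that $\tilde J|_{q=1}-J^{\fake}(t)-\widetilde{\arm}$ equals exactly this same nonzero term (it vanishes at $t=0$, which is why the $t=0$ case looks deceptively simple). If all arm insertions dropped out as you assert, you would instead be trying to prove $\tilde J|_{q=1}=J^{\fake}(t)+\widetilde{\arm}$, which is false, and your matching in Step 2 would end in a contradiction. The missing idea, which is the actual content of the paper's proof, is that Lemma~\ref{lemma_vanishingdeg2} reduces the effect of shifting the input by $\widetilde{\arm}$ to this single degree-zero pairing, which must then be computed and matched against the leftover of the $q=1$ expansion.

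Two lesser points. Proposition~\ref{prop_head_fake} cannot be used on the right-hand side as part of the argument: it is a statement about the geometric $J^K$, and invoking it for $\tilde J$ would presuppose $\tilde J=J^K$, the very content of Theorem~\ref{conjecture_npt}; it can only motivate the shape of the claim. And your worry about the constant-map sector is legitimate but routine: the $\beta=0$ terms of \eqref{e:jtilde} are by construction the $\beta=0$ part of $J^{\fake}(t)$, so they cancel in the difference $\tilde J|_{q=1}-J^{\fake}(t)$; the only delicate degree-zero contribution is the mixed $\widetilde{\arm}$--$t$ pairing above.
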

\begin{proof}
Note that
\[
\begin{split}
    &\tilde{J}|_{q=1} - \Bigg( (1-q) + t +   \frac{ \sum_{j=1}^{n_1}\Phi^{1j} \partial_{t_j}F(t)  }{1-q} + \frac{q\ \Phi^{01} F(t)}{(1-q)^2} \Bigg) 
    \\
    &=\widetilde{\arm} (q) + \sum_{{\rm ind}(\beta)=1} \sum_{d=1}^{\infty} Q^{d\beta} \cdot \Bigg[ \sum_{j=1}^{n_1} \Phi^{1j} \beta_j \Bigg( \frac{ e^{\beta_t} \frac{\GV_{d\beta}^{(3)}}{d^2} }{1-q} \Bigg) 
    \\
    &\hspace{40mm} + \Phi^{01} \Bigg( \frac{ f_1(d,\beta_t) + \frac{3\GV_{d\beta}^{(3)}}{d^3} }{1-q} + \frac{ f_2(d,\beta_t) -\frac{2\GV_{d\beta}^{(3)}}{d^3} }{(1-q)^2} \Bigg) \Bigg],
\end{split}
\]
and that 
\[
\begin{split}
    & J^{\fake}(t) - \Bigg( (1-q) + t +   \frac{ \sum_{j=1}^{n_1}\Phi^{1j} \partial_{t_j}F(t)  }{1-q} + \frac{q\ \Phi^{01} F(t)}{(1-q)^2} \Bigg)
    \\
    &= \sum_{\beta \in H_2(X,\mathbb{Z})}  \GW_{\beta} Q^{\beta} e^{\beta_t} \Bigg[ \sum_{j=1}^{n_1} \beta_j\Phi^{1j} \Bigg( \frac{1}{1-q}\Bigg) + \Phi^{01} \Bigg( \frac{3-\beta_t}{1-q} + \frac{\beta_t -2 }{(1-q)^2} \Bigg) \Bigg]
    \\
    &= \sum_{{\rm ind}(\beta)=1} \sum_{d\geq 1} \frac{\GV_{d\beta}^{(3)}}{d^3} Q^{d\beta} e^{\beta_t} \Bigg[ \sum_{j=1}^{n_1} \beta_j \Phi^{1j} \Bigg( \frac{d}{1-q}  \Bigg) + \Phi^{01} \Bigg( \frac{3-\beta_t}{1-q} + \frac{\beta_t -2 }{(1-q)^2} \Bigg)  \Bigg].
\end{split}
\]
Their difference comes from the $\widetilde{\arm}$ contribution:
\[
\begin{split}
    \tilde{J}|_{q=1} - J^{\fake}(t) &= \widetilde{\arm} (q) + \sum_{{\rm ind}(\beta)=1} \sum_{d=1}^{\infty} \Phi^{01} \Bigg( \frac{f_1(d,\beta_t) + \frac{3\GV_{d\beta}^{(3)}}{d^3} }{1-q} \Bigg) 
    \\
    &= J^{\fake}(t+ \widetilde{\arm}) - J^{\fake}(t).
\end{split}
\]
The last equality follows from the Poincar\'e pairing:
\[
\langle \frac{1}{1-qL}, \widetilde{\arm} (L), t \rangle_{0,3,\beta'=0} = \sum_{{\rm ind}(\beta)=1} \sum_{d=1}^{\infty} \Bigg( \frac{f_1(d,\beta_t) + \frac{3\GV_{d\beta}^{(3)}}{d^3} }{1-q} \Bigg),
\]
since $\ch(\widetilde{\arm} (q)) \in H^{\geq 4}(X)[\![1- q,Q]\!]$ by Lemma~\ref{lemma_arm_expansion} and hence all terms with $\beta'\neq 0$ vanish by Lemma~\ref{lemma_vanishingdeg2}. This completes the proof.
\end{proof}
We divide the computation of $\tilde{J}|_{q=\zeta^{-1}}$ into two parts, $t=0$ case and $t\neq 0$ case.
\begin{proposition}[{\cite[\S~4]{Chou_Lee_2022_QKGVI}}]  \label{prop:Jzeta_t=0}  Let $t=0$ and $\zeta$ be primitive $r$-th roots of unity. We have
\[
\begin{split}
    \tilde{J}|_{q=\zeta^{-1}} &= \tilde{\delta}_{\zeta}(\zeta q)
    \\
    &+ \sum_{n,\beta,\alpha} \frac{Q^{r\beta} \Phi_{\alpha}}{n!} \Big[ \frac{\Phi^{\alpha}}{1-\zeta q L^{1/r}}, \leg_{r}(L) ,\dots, \leg_{r}(L), \tilde{\delta}_{\zeta}(L^{1/r}) \Big]^{X_{\zeta}}_{0,n+2,\beta}.
\end{split}
\]
\end{proposition}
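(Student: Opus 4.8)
The plan is to verify criterion (iii) of the adelic characterization (Theorem~\ref{thm_adelic_charac}) for $f=\tilde J$ under the specialization $t=0$. The first step is to record the simplification at $t=0$: by Remark~(2) after Theorem~\ref{conjecture_npt} the terms $c(r,q,\beta_t)$ and $d(r,q,\beta_t)$ vanish, and $F(t)$ together with $\partial_{t_j}F(t)$ vanish as well, so $\tilde J|_{t=0}$ collapses to
\[
(1-q) + (1-q)\sum_{0\neq\beta}\sum_{r\geq 1}\GV_\beta\, Q^{r\beta}\Big[\sum_{j=1}^{n_1}\Phi^{1j}\beta_j\, a(r,q^r) + \Phi^{01}\, b(r,q^r)\Big].
\]
This is exactly the genus-zero generating function whose adelic criterion~(iii) was established in \cite[\S~4]{Chou_Lee_2022_QKGVI}, so the proposition is a restatement in the present notation; the plan is to reproduce the two matching computations.

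For the left-hand side I would expand $\tilde J|_{t=0}$ into partial fractions at the pole $q=\zeta^{-1}$ using Lemmas~\ref{lemma_a_expansion} and~\ref{lemma_b_expansion}, which already give the coefficients of $(1-\zeta q)^{-1}$, $(1-\zeta q)^{-2}$ and $(1-\zeta q)^{-3}$ explicitly in terms of the $\GV_{d\beta}^{(\gamma)}$. Re-expanding around $q=\zeta^{-1}$ then produces both the principal part and the regular part of $\tilde J|_{q=\zeta^{-1}}$ in closed form.

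For the right-hand side the two ingredients are computed separately. The term $\tilde\delta_\zeta(\zeta q)$ is evaluated directly from its definition in \eqref{e:2.2}, using $\widetilde{\tail}_\zeta$ (whose Chern character lies in $H^{\geq 4}(X)$ by Lemma~\ref{lemma_tail_expansion}). For the stem sum, the leg insertions are the explicit $\widetilde{\leg}_r$ of Lemma~\ref{lemma_leg_expansion}, whose Chern character also lies in $H^{\geq 4}(X)$, i.e.\ has $\deg_{\C}\geq 2$. On the stem moduli $\M^X_{0,n+2,\beta}(\zeta)$ each additional $\widetilde{\leg}_r$-marked point raises the virtual dimension by one while forcing a degree-$\geq 2$ insertion, a net dimension deficit; hence the dimension constraint (as in Lemma~\ref{lemma_vanishingdeg2}) forces all but finitely many $n$ to vanish. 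The surviving low-$n$ stem integrals are reduced to ordinary genus-zero GV data by extracting the constant $r^{-n}$ from the twisting class via Lemma~\ref{lemma_rpower}, the Adams-operation factors $r^{3-i}$ from Lemma~\ref{lemma_leg_expansion}, and the finite $L^{1/r}$ expansion of Remark~\ref{r:2.5}. Adding $\tilde\delta_\zeta(\zeta q)$ and the stem sum and comparing coefficient by coefficient with the left-hand side completes the verification.

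The main obstacle is the stem-theory evaluation: one must show that the explicit principal and regular parts extracted from Lemmas~\ref{lemma_a_expansion}--\ref{lemma_b_expansion} are reproduced exactly, term by term in the $\GV_{d\beta}^{(\gamma)}$ and in the pole orders $(1-\zeta q)^{-i}$, by the finitely many surviving stem invariants together with $\tilde\delta_\zeta$. The dimension bound makes this a finite check, but tracking the $r^{-n}$ normalization, the Adams factors, and the $L^{1/r}$-expansion simultaneously is where the bookkeeping is delicate; this is precisely the content carried out in \cite[\S~4]{Chou_Lee_2022_QKGVI}, and I would follow that argument.
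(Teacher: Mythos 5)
Your proposal takes essentially the same route as the paper's own (sketched) proof: verify adelic criterion (iii) for $\tilde J$ at $t=0$, computing the left side by the partial-fraction expansions of Lemmas~\ref{lemma_a_expansion}--\ref{lemma_b_expansion}, and collapsing the stem sum on the right via the $H^{\geq 4}$ property of the legs (Lemmas~\ref{lemma_leg_expansion} and~\ref{lemma_vanishingdeg2}), the constant $r^{-n}$ of Lemma~\ref{lemma_rpower}, and the finite $L^{1/r}$ expansion of Remark~\ref{r:2.5}, with the detailed bookkeeping deferred to Part I, \S 4, exactly as the paper does. The one item to make explicit when carrying this out: for the two-pointed ($n=0$) stem term the twisting class contributes more than its constant $r^{-n}$ --- its type-$C$ (nodal) part produces an additional $\langle\,\rangle^{\fake}$ contribution, the paper's equation~(\ref{eqn_t=0_fake}), without which the coefficients of $(1-\zeta q)^{-1}$ do not match.
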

\begin{proof}
The computation is similar to the section 4 in \cite{Chou_Lee_2022_QKGVI} for $X$ the quintic threefold. We sketch the proof here.

From the definition of $\widetilde{\tail}_{\zeta}$, the polynomial term of $\tilde{J}|_{q=\zeta^{-1}}$ equals 
\[
1-q + \widetilde{\tail}_{\zeta}(q) = (1-\zeta^{-1}) + \zeta^{-1}(1-\zeta q) +\widetilde{\tail}_{\zeta}(q)  = \tilde{\delta}_{\zeta}(\zeta q) \in \mathbf{K}[\![1-\zeta q]\!].
\] 

We use Lemma~\ref{lemma_a_expansion}, \ref{lemma_b_expansion}, \ref{lemma_c_expansion}, and \ref{lemma_d_expansion} for the principal part of LHS. The principal part on the RHS equals
\[
\begin{split}
    &\sum_{\alpha} \Phi_{\alpha} \Bigg( \Big[ \frac{\Phi^{\alpha}}{1-\zeta qL^{1/r} }, \leg_r(L), 1-\zeta^{-1}L^{1/r} + \widetilde{\tail}_{\zeta}(\zeta^{-1}L^{1/r})\Big]^{X_{\zeta}}_{0,3,\beta=0}
    \\
    & \hspace{20mm} + \sum_{\beta} Q^{r\beta}\Big[ \frac{\Phi^{\alpha}}{1-\zeta q L^{1/r}}, 1-\zeta^{-1} L^{1/r} + \widetilde{\tail}_{\zeta}(\zeta^{-1}L^{1/r})\Big]^{X_{\zeta}}_{0,2,\beta} \Bigg). 
\end{split}
\]
Since $\ch(\leg_r(L)) \in H^{\geq 4}(X)[\![1-q,Q]\!])$ by Lemma~\ref{lemma_leg_expansion}, only $\beta=0$ and $n=3$ term for the first expression has nonzero contribution. It can be computed directly
\begin{equation} \label{eqn_t=0_leg}
\begin{split}
    &\sum_{\alpha} \Phi_{\alpha} \Big[ \frac{\Phi^{\alpha}}{1-\zeta qL^{1/r}}, \leg_r(L), 1-\zeta^{-1}L^{1/r} + \widetilde{\tail}_{\zeta}(\zeta^{-1}L^{1/r})\Big]^{X_{\zeta}}_{0,3,0}
    \\
    &= \frac{1-\zeta^{-1}}{r^2(1 - \zeta q)} \sum_{\alpha} \Phi_{\alpha}\langle \Phi^{\alpha}, \leg_r(L), 1 \rangle^X_{0,3,0}
    \\
    & = \frac{1-\zeta^{-1}}{1-\zeta q}\sum_{\ind(\beta)=1}\sum_{d=1}^{\infty} Q^{rd\beta} 
    \\
    & \hspace{20mm} \cdot\Bigg[ \sum_{j=1}^{n_1}  \Phi^{1j} \beta_j \Bigg( \GV_{d\beta}^{(1)} - \frac{\GV_{d\beta}^{(3)}}{d^3} \Bigg) + r\Phi^{01} \Bigg( d\GV_{d\beta}^{(-1)} - \frac{\GV_{d\beta}^{(3)}}{d^3}  \Bigg) \Bigg].
\end{split}
\end{equation}
In the above computation, all terms involving $\widetilde{\tail}$ vanish by Lemma~\ref{lemma_vanishingdeg2}.
The $r^{-2}$ after the first equality follows from the constant term of the twisting class by Lemma~\ref{lemma_rpower} and the Poincar\'e pairing on $X/ \mathbb{Z}_r$. 

We compute the second expression as follows:
\[ 
\begin{split}
    &\sum_{\beta} Q^{r\beta}\Big[ \frac{\Phi^{\alpha}}{1-\zeta q L^{1/r}}, 1-\zeta^{-1} L^{1/r} +  \widetilde{\tail}_{\zeta}(\zeta^{-1}L^{1/r})\Big]^{X_{\zeta}}_{0,2,\beta} 
    \\
    &=  \frac{1}{r}\sum_{\alpha}\Phi^{\alpha} \langle \frac{\Phi_{\alpha}}{1-\zeta q L^{1/r}}, \Psi^r ( \sum_{\alpha}\Phi^{\alpha}\langle \Phi_{\alpha}\rangle^{\fake} ) , 1-\zeta^{-1} L^{1/r} \rangle^{X/\mathbb{Z}_r, K}_{0,3,0}
    \\
    &\hspace{25mm} +\sum_{\alpha} \sum_{\beta} Q^{r\beta} \Phi_{\alpha} \langle \frac{\Phi^{\alpha}}{1-\zeta q L^{1/r}}, 1-\zeta^{-1} L^{1/r}\rangle^{ X/ \mathbb{Z}_r, K }_{0,2,r\beta}.
\end{split}
\]
All terms involving $\widetilde{\tail}$ vanish by Lemma~\ref{lemma_vanishingdeg2}.
The term involving $\langle \rangle^{\fake}$ comes from twisting of type $C$. Two expressions can be computed directly: 
\begin{equation} \label{eqn_t=0_fake}
\begin{split}
    & \frac{1}{r} \sum_{\alpha} \Phi^{\alpha} \langle \frac{\Phi_{\alpha}}{1-\zeta q}, \Psi^r ( \sum_{\alpha'} \Phi^{\alpha'} \langle \Phi_{\alpha'} \rangle^{\fake} ), 1 \rangle_{0,3,0}^{X/ \mathbb{Z}_r,K}
    \\
    & = \frac{1-\zeta^{-1}}{r^2(1-\zeta q)}\sum_{\alpha} \Phi^{\alpha}\langle \Phi_{\alpha}, \sum_{\beta>0} Q^{r\beta}\Big( \sum_{j=1}^{n_1}r^2\Phi^{1j}\beta_j \GW_{\beta} + r^3 \Phi^{01} \GW_{\beta} \Big), 1 \rangle_{0,3,0}^{X,K}
    \\
    &= \frac{1-\zeta^{-1}}{1-\zeta q}\sum_{\ind(\beta)=1}\sum_{d=1}^{\infty}\Bigg[\sum_{j=1}^{n_1} \Phi^{1j}\beta_j \Bigg(\frac{\GV_{d\beta}^{(3)}}{d^3}\Big) + r \Phi^{01} \Bigg(\frac{\GV_{d\beta}^{(3)}}{d^3} \Bigg) \Bigg].
\end{split}
\end{equation}
\begin{equation} \label{eqn_t=0_main}
\begin{split}
    & \sum_{\alpha} \sum_{\beta} Q^{r\beta} \Phi^{\alpha} \langle \frac{\Phi_{\alpha}}{1-\zeta q L^{1/r}}, 1-\zeta^{-1} L^{1/r}\rangle^{ X/ \mathbb{Z}_r,K }_{0,2,r\beta}
    \\
    & =\sum_{\ind(\beta)=1} \sum_{d=1}^{\infty} Q^{rd\beta}
    \\
    &\cdot\Bigg[ \sum_{j=1}^{n_1} \Phi^{1j} \beta_j \Bigg( \frac{(1-\zeta^{-1})(\frac{-\GV_{d\beta}^{(3)}}{r^2d^2}) + \zeta^{-1} (\frac{\GV^{(3)}_{d\beta}}{r^2d^2} ) }{1-\zeta q} + \frac{(1-\zeta^{-1}) (\frac{\GV_{d\beta}^{(3)}}{r^2d^2}) }{(1-\zeta q)^2} \Bigg)
    \\
    &  + \Phi^{01} \Bigg(  \frac{ (1-\zeta^{-1})(\frac{-\GV^{(3)}_{d\beta} }{r^3d^3}) +\zeta^{-1} (\frac{3\GV_{d\beta}^{(3)}}{r^3d^3} ) }{1-\zeta q} + \frac{ (1-\zeta^{-1})(\frac{\GV^{(3)}_{d\beta}}{r^3d^3}) + \zeta^{-1} (\frac{-2\GV_{d\beta}^{(3)}}{r^3d^3} ) }{(1-\zeta q)^2} 
    \\
    & \hspace{80mm} + \frac{(1-\zeta^{-1})(\frac{-2\GV_{d\beta}^{(3)}}{r^3d^3}) }{(1-\zeta q)^3}\Bigg)  \Bigg].
\end{split}
\end{equation}

Combine equation~(\ref{eqn_t=0_fake}), (\ref{eqn_t=0_leg}), and (\ref{eqn_t=0_main}) gives the principal part of RHS and it coincides with the principal part of LHS. This completes the proof.
\end{proof}

\begin{proposition} \label{prop_Jzeta_tnot0} Let $t= \sum_{j=1}^{n_1} t_j \Phi_{1j}$ and $\zeta$ be primitive $r$-th roots of unity. We have
\[
\begin{split}
    \tilde{J}_{q=\zeta^{-1}} &= \tilde{\delta}_{\zeta}(\zeta q)
    \\
    &+ \sum_{n,\beta,\alpha} \frac{Q^{r\beta} \Phi_{\alpha}}{n!} \Big[ \frac{\Phi^{\alpha}}{1-\zeta q L^{1/r}}, \leg_{r}(L) ,\dots, \leg_{r}(L), \tilde{\delta}_{\zeta}(L^{1/r}) \Big]^{X_{\zeta}}_{0,n+2,\beta},
\end{split}
\]
\end{proposition}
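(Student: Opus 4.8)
The plan is to verify criterion (iii) of the adelic characterization (Theorem~\ref{thm_adelic_charac}) for $\tilde J$ at a primitive $r$-th root of unity $\zeta\neq1$, promoting the $t=0$ identity of Proposition~\ref{prop:Jzeta_t=0} to a general divisor input $t=\sum_j t_j\Phi_{1j}$. The organizing principle is that the right-hand side depends \emph{linearly} on $\tilde\delta_\zeta$ (which enters only the term $\tilde\delta_\zeta(\zeta q)$ and the final slot of each stem invariant of Proposition~\ref{Prop_Stem_inv}), while the legs $\widetilde{\leg}_r=\Psi^r(\widetilde{\arm}|_{t=0})$ are $t$-independent by definition. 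Consequently the entire $t$-dependence of the right-hand side is channeled through $\tilde\delta_\zeta$, so that, granting Proposition~\ref{prop:Jzeta_t=0}, it suffices to match the $t$-dependent part of $\tilde J|_{q=\zeta^{-1}}$ against the contribution of $\tilde\delta_\zeta^{(t)}:=\tilde\delta_\zeta-\tilde\delta_\zeta|_{t=0}$.

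\textbf{Regular part.} First I would dispatch the regular part at $\zeta$. The polynomial part of $\tilde J|_{q=\zeta^{-1}}$ is $1-q+t+\widetilde{\tail}_\zeta(q)$, whose re-expansion at $q=\zeta^{-1}$ is exactly $\tilde\delta_\zeta(\zeta q)$, as in Proposition~\ref{prop:Jzeta_t=0} but with the harmless extra summand $t$. Since the $F(t)$-terms are regular at $\zeta\neq1$ (their only poles are at $q=1$), all the nonlinear-in-$t$ dependence is absorbed into $\widetilde{\tail}_\zeta$, hence into this single summand $\tilde\delta_\zeta(\zeta q)$. The problem is thereby reduced to matching the principal parts, the poles at $1-\zeta q$.

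\textbf{Principal parts.} The decisive feature at $\zeta\neq1$ is that, by Lemmas~\ref{lemma_c_expansion} and \ref{lemma_d_expansion}, the $c$- and $d$-contributions to the principal part of $\tilde J|_{q=\zeta^{-1}}$ are \emph{linear} in $\beta_t$, the nonlinear dependence ($e^{\beta_t}$, $\beta_t^2$) being confined to the $\zeta=1$ block treated in criterion (ii) (Proposition~\ref{prop_J_q=1}). On the right, $\tilde\delta_\zeta^{(t)}=t+\widetilde{\tail}_\zeta^{(t)}$; since $\ch(\widetilde{\tail}_\zeta)\in H^{\ge4}(X)$, the tail part is annihilated in every positive-degree stem invariant by Lemma~\ref{lemma_vanishingdeg2}, and likewise in the degree-zero three-point leg-stem, where it would pair with $\widetilde{\leg}_r\in H^{\ge4}(X)$ above the top degree of $X$. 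Only the divisor $t\in H^2(X)$ survives, and, inserted once, it yields precisely the linear dependence on $\beta_t$ required. I would then reuse the split of Proposition~\ref{prop:Jzeta_t=0}: the new $t$-insertion feeds (a) the degree-zero three-point leg-stem $[\Phi^\alpha/(1-\zeta q L^{1/r}),\widetilde{\leg}_r,t]^{X_\zeta}_{0,3,0}$, which by Lemma~\ref{lemma_rpower} reduces to a constant twisting factor times a Poincar\'e pairing on $X/\Z_r$ and survives only against the $H^4$-component of the leg, and (b) the positive-degree two-point stem $[\Phi^\alpha/(1-\zeta q L^{1/r}),t]^{X_\zeta}_{0,2,r\beta}$, which, being a twisted (cohomological) Gromov--Witten invariant, is evaluated through the divisor equation.

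\textbf{Main obstacle.} The crux is this last evaluation: verifying that the divisor equation for the single $t$-insertion in the twisted orbifold stem invariants reproduces \emph{verbatim} the linear-$\beta_t$ coefficients $\beta_j\beta_t\,\GV_{d\beta}^{(3)}/(r^2d^2)$ of Lemma~\ref{lemma_c_expansion} and the matching $\Phi^{01}$-coefficients of Lemma~\ref{lemma_d_expansion}, including the latter's second-order pole in $1-\zeta q$. This demands careful bookkeeping of the factor of $\beta_t$ produced by integrating the divisor $t$ over the degree-$r\beta$ class, the $r^{-n}$ normalization from Lemma~\ref{lemma_rpower}, the (finite, by Remark~\ref{r:2.5}) expansion of $1/(1-\zeta q L^{1/r})$ responsible for the higher-order poles, and the grading of the dual basis from Lemma~\ref{lemma_degPhi}. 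Once these match termwise, the principal parts agree, the regular parts agree by the tail computation, and $\tilde J$ satisfies criterion (iii) at $\zeta$; together with Propositions~\ref{prop_J_q=1} and \ref{prop:Jzeta_t=0} and Theorem~\ref{thm_adelic_charac}, this places $\tilde J$ on $\mathcal{L}^K$.
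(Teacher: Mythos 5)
Your proposal follows essentially the same route as the paper's proof: reduce to the $\beta_t$-linear part via the $t=0$ case (Proposition~\ref{prop:Jzeta_t=0}), observe that $\widetilde{\leg}_r$ is $t$-independent so the new $t$-dependence enters the right-hand side only through the final slot $\tilde\delta_\zeta = 1-\zeta^{-1}q + t + \widetilde{\tail}_\zeta$, kill all tail contributions by Lemma~\ref{lemma_vanishingdeg2}, and match the surviving single-$t$-insertion stem invariants against the $\beta_t$-linear coefficients of Lemmas~\ref{lemma_c_expansion} and \ref{lemma_d_expansion}. The one detail you gloss over is that the paper's evaluation of the positive-degree two-point stem splits off an extra type-$C$ (fake-theory) contribution, equation~\eqref{eqn_tnot0_fake}, in addition to the main term \eqref{eqn_tnot0_main}, but this is subsumed in your stated plan to reuse the split of Proposition~\ref{prop:Jzeta_t=0}.
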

\begin{proof}
The computation for $\beta_t=0$ part has been covered by previous lemma. We compare the $\beta_t\neq 0$ part. On the LHS, we consider only $c(r,q,\beta_t)$ and $d(r,q,\beta_t)$ terms. On the RHS, only need to consider 
\[
\begin{split}
    \sum_{n,\beta,\alpha} \frac{Q^{r\beta} \Phi_{\alpha}}{n!} \Big[ \frac{\Phi^{\alpha}}{1-\zeta q L^{1/r}}, \leg_{r}(L) ,\dots, \leg_{r}(L), t \Big]^{X_{\zeta}}_{0,n+2,\beta}.
\end{split}
\]
since $\leg_{\zeta}(L)$ is independent of $t$.
With the same argument as in Proposition~\ref{prop:Jzeta_t=0}, we compute the following three expressions:
\begin{equation} \label{eqn:tnot0_leg}
\begin{split}
     \sum_{\alpha} \Phi_{\alpha} \Big[ \frac{\Phi^{\alpha}}{1-\zeta qL^{1/r}}, \leg_r(L), t \Big]^{X_{\zeta}}_{0,3,0}
    = \Phi^{01} \beta_t\Bigg( \frac{ \GV_{d\beta}^{(1)} - \frac{\GV_{d\beta}^{(3)}}{r^3d^3} }{1-\zeta q} \Bigg)
\end{split}
\end{equation}
\begin{equation} \label{eqn_tnot0_fake}
\begin{split}
    & \frac{1}{r} \sum_{\alpha} \Phi^{\alpha} \langle \frac{\Phi_{\alpha}}{1-\zeta q}, \Phi^r ( \sum_{\alpha} \Phi^{\alpha} \langle \Phi_{\alpha} \rangle^{\fake} ), t \rangle_{0,3,0}^{X/ \mathbb{Z}_r, K} = \Phi^{01} \beta_t \Bigg( \frac{ \frac{\GV_{d\beta}^{(3)}}{r^3d^3} }{1-\zeta q} \Bigg)
\end{split}
\end{equation}
\begin{equation} \label{eqn_tnot0_main}
\begin{split}
    &\sum_{\alpha} \sum_{\beta} Q^{r\beta} \Phi_{\alpha} \langle \frac{\Phi^{\alpha}}{1-\zeta q L^{1/r}}, t \rangle^{ X/ \mathbb{Z}_r, K }_{0,2,r\beta}
    \\
    & = \sum_{j=1}^{n_1} \beta_j\Phi^{1j} \beta_t \Bigg( \frac{ \frac{\GV_{d\beta}^{(3)}}{r^2d^2} }{1-\zeta q} \Bigg) + \Phi^{01} \beta_t\Bigg( 
    \frac{ \frac{\GV_{d\beta}^{(3)}}{r^3d^3} }{1-\zeta q} - \frac{ \frac{\GV_{d\beta}^{(3)}}{r^3d^3} }{(1-\zeta q)^2} \Bigg)
\end{split}
\end{equation}
Combine equation~(\ref{eqn:tnot0_leg}), (\ref{eqn_tnot0_fake}), and (\ref{eqn_tnot0_main}) gives the RHS and it coincides with LHS.

This completes the proof.
\end{proof}

\begin{theorem}[ = Theorem~\ref{conjecture_npt}] \label{thm_conj}
    $\tilde{J} \in \mathcal{L}^{K}$. Moreover, $\tilde{J} = J^K(t)$. 
\end{theorem}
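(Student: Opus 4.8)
The plan is to show that $\tilde{J}$ satisfies the three conditions of Givental's adelic characterization (Theorem~\ref{thm_adelic_charac}), so that $\tilde{J}\in\mathcal{L}^K$, and then to pin down which point of the cone it is by reading off its $\mathcal{K}_+$-projection. Throughout I treat the functions $\widetilde{\arm}$, $\widetilde{\leg}_r$, $\widetilde{\tail}_\zeta$, $\tilde{\delta}_\zeta$ of Section~\ref{s:3.4} (obtained by feeding $f=\tilde{J}$ into the formulas \eqref{e:2.2}) as the candidates for the geometric Kawasaki-strata data attached to $J^K$, and the substance of the proof is that they behave exactly as the latter must.

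First I would dispatch conditions (i) and (ii). Condition (i) is immediate from the third item of the Remark following Theorem~\ref{conjecture_npt} (equivalently \cite[Corollary~3.3]{Chou_Lee_2022_QKGVI}) together with Lemmas~\ref{lemma_a_expansion}--\ref{lemma_d_expansion}: $\tilde{J}$ is a rational function of $q$ whose polar part lies in $\mathcal{K}_-$ with poles only at roots of unity, of order $\le 3$. For condition (ii) I would invoke Proposition~\ref{prop_J_q=1}, which gives $\tilde{J}|_{q=1}=J^{\fake}(t+\widetilde{\arm})$; to match the required form $J^{\fake}\big(-(1-q)+\pi_+(\tilde{J})|_{q=1}+\widetilde{\arm}\big)$ I must know $\pi_+(\tilde{J})=(1-q)+t$, so that the dilaton shift $-(1-q)$ cancels the $(1-q)$ inside $\pi_+(\tilde{J})|_{q=1}$. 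This amounts to the vanishing of the $\mathcal{K}_+$-part of the whole $\beta\neq 0$ sum, a M\"obius cancellation: each of $(1-q)a$, $(1-q)b$, $(1-q)c$, $(1-q)d$ tends to $0$ as $q\to\infty$, equivalently the per-$\zeta$ constants in Lemmas~\ref{lemma_a_expansion}--\ref{lemma_d_expansion} collapse under the $\sum_{r\mid N}\mu(r)$-type sums.

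The heart of the argument is condition (iii), the stem-theory identity at each primitive $r$-th root of unity $\zeta$. Here I would use Proposition~\ref{prop:Jzeta_t=0} for the $t=0$ part and Proposition~\ref{prop_Jzeta_tnot0} for the $t\neq 0$ part, comparing the expansion of $\tilde{J}$ at $q=\zeta^{-1}$ with the stem contribution built from $\widetilde{\leg}_r$ and $\tilde{\delta}_\zeta$, and matching principal parts fraction by fraction. I expect the main obstacle to be an apparent circularity: condition (iii) is stated in terms of the genuinely geometric $\leg_r=\Psi^r\big(\pi_+^{\fake}(J^K(0)|_{q=1})\big)$, whereas the propositions compute with $\widetilde{\leg}_r=\Psi^r(\widetilde{\arm}|_{t=0})$, and these coincide only once $\tilde{J}=J^K$, which is precisely what is to be proved. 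The resolution is an induction on the Mori cone, exactly as in Lemma~\ref{lemma_vanishingdeg2}: in the $Q^{\beta}$-coefficient of the stem sum in (iii), the insertions $\leg_r(L)$ (hence the $\arm$, and thus $J^K$ itself) enter only in curve classes strictly smaller than $\beta$, since each leg carries positive degree and $r\ge 2$. Assuming inductively that $\tilde{J}$ and $J^K$ agree in all smaller classes, one gets $\widetilde{\leg}_r=\leg_r$ in the relevant range, and the two propositions then legitimately verify (iii) in class $\beta$.

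With (i)--(iii) established, Theorem~\ref{thm_adelic_charac} yields $\tilde{J}\in\mathcal{L}^K$. It remains to identify the point. Because $\tfrac{1}{1-qL}\in\mathcal{K}_-$ for a line bundle $L$, the correlator part of the big $J$-function lies in $\mathcal{K}_-$, so $\mathbf{t}\mapsto J^K(\mathbf{t})$ satisfies $\pi_+(J^K(\mathbf{t}))=(1-q)+\mathbf{t}$ and parametrizes $\mathcal{L}^K$ bijectively; equivalently, $\mathcal{L}^K$ is a graph over $\mathcal{K}_+$ and the input of any $f\in\mathcal{L}^K$ is recovered as $\mathbf{t}=\pi_+(f)-(1-q)$. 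Having already shown $\pi_+(\tilde{J})=(1-q)+t$, I conclude that the implicit mirror transformation is trivial and $\tilde{J}=J^K(t)$, which closes the degree-by-degree induction and finishes the proof.
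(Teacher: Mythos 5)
Your proposal is correct and follows essentially the same route as the paper: conditions (i)--(iii) of Theorem~\ref{thm_adelic_charac} are verified exactly as you propose, via the expansions of Lemmas~\ref{lemma_a_expansion}--\ref{lemma_d_expansion}, Proposition~\ref{prop_J_q=1}, and Propositions~\ref{prop:Jzeta_t=0}--\ref{prop_Jzeta_tnot0}, after which Givental's uniqueness theorem together with $\pi_+(\tilde{J})=(1-q)+t$ yields $\tilde{J}=J^K(t)$. The two places where you are more careful than the paper --- verifying explicitly that the $\beta\neq 0$ part of $\tilde{J}$ has vanishing $\mathcal{K}_+$-projection, and breaking the apparent circularity between $\widetilde{\leg}_r$ and the geometric $\leg_r$ by induction on the Mori cone (legitimate, since legs carry positive degree and $r\geq 2$) --- are refinements of the same argument rather than a different proof; the paper absorbs the latter point into the abuse of notation of Remark~\ref{r:2.2} and the asserted equivalence with the self-referential Givental--Tonita formulation of criterion (iii).
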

\begin{proof}
For the first statement $\tilde{J} \in \mathcal{L}^{K}$, we use adelic characterizaion theorem, Theorem~\ref{thm_adelic_charac}. It suffices to check the following three criteria:
\begin{itemize}
    \item[(i)] $\tilde{J}$ only has poles at roots of unity. This criterion follows immediately from the expression of $\tilde{J}$
    \item[(ii)] The fake theory computation follows from Lemma~\ref{prop_J_q=1}.
    \item[(iii)] The stem theory computation follows from Proposition~\ref{prop_Jzeta_tnot0}.
\end{itemize}
This proves the first statement. The second statement follows from Givental's uniqueness theorem. Since every function $f$ whose range lies on the Lagrangian cone is completely determined by $\pi_+(f)$ and
\[
\pi_+(\tilde{J}) = 1- q + t = \pi_+ ( J^K(t) ) ,
\]
we conclude that $\tilde{J} = J^K(t)$. 
\end{proof}

\begin{remark} \label{r:3.14}

The dependence of $J^K$ on $\beta_t$ is of the form $e^{\beta_t}$, except finitely many monomials in $\beta_t^{\leq 2}$ and an overall linear factor $\beta_t$ to some terms. This can be explained in geometric terms.

Recall that for GW on CY3, invariants with (even) classes of $\deg_{\mathbb{C}} \geq 2$ vanish. The divisor and string equations ``remove'' $\deg_{\mathbb{C}} =1$ and $0$ classes in the insertions, and reduce all invariants to $0$-pointed  $\langle \cdot \rangle_{g, n=0,\beta}$. Furthermore, the dependence on the $\deg_{\mathbb{C}}=1$ classes $D$ is of the form $e^{\int_{\beta} D}$. A closed formula of this reduction can be found in \cite{Fan_Lee_2019}. 

If the moduli of stable maps at genus zero were virtually smooth \emph{schemes}, the virtual HRR would imply that quantum $K$-invariants behave almost like GW invariants. As we are restricting the insertions in $J^K$ to $t \in \op{ch}^{-1} H^2(X)$, and the fact that the $n$-pointed moduli has virtual dimension $n$, reduction from $n$-point to $1$-point would be simply applying the divisor axiom in GW and the result should depend on $e^{\beta_t}$. The extra linear overall factor to some terms also comes from the divisor axiom.

This is however not true, but the stacky contributions \cite[Theorem~5.1]{Ediddin_RR} in this case is limited to finitely many graphs. This implies the simple dependence of $J^K (t)$ on $\beta_t$. For completeness, we list the possible ''Kawasaki strata'' below.

The constant term ($\beta_t=0$) comes from the graph sum of the Kawasaki strata in Figure~\ref{Graph_Kawasaki_Strata_beta0}. The linear terms  of $\beta_t$ and higher order terms are given by Figure~\ref{Graph_Kawasaki_Strata_beta1} and Figure~\ref{Graph_Kawasaki_Strata_beta2} respectively.
    \begin{figure}[ht]
    \centering
    \begin{tikzpicture}[scale=1.5]
    \filldraw[black] (-3,0.75) node[anchor=west]{$\frac{\Phi_{\alpha}}{1-\zeta_r q L^{1/r}}$};
    \filldraw[black] (-4,0.2) node[anchor=west]{$\displaystyle\sum_{r\geq 2} \sum_{\zeta_r}$};
    \filldraw[-to] (-3,0)--(-3, 0.8);
    \filldraw[ultra thick] (-3,0)--(-2,0);
    \filldraw[black] (-2,0) circle(1.5pt);
    \filldraw[black] (-2.1,0.2) node[anchor=west]{$\delta_{\zeta_r}$};
    \draw (-3.1, 1) .. controls (-3.2, 0.4) .. (-3.1,-0.2);
    \filldraw[black] (-1.5,0.4) node[anchor=west]{$+$};
    %\\
    \filldraw[ultra thick] (0, 0.9)--(0, 0);
    \filldraw[-to] (0,0.9)--(-0.8,0.9);
    \filldraw[black] (-1.1, 0.65) node[anchor=west]{$\frac{\Phi_{\alpha}}{1-\zeta_r q L^{1/r}} $ };
    \draw (0,0.4).. controls (0.25, 0.5) and (0.5, 0.3) ..(0.7,0.4);
    \filldraw[black] (0,0) circle(1.5pt);
    \filldraw[black] (0,0.6) node[anchor=west]{$\leg_r(L)$};
    \filldraw[black] (0,0) node[anchor=west]{$\delta_{\zeta_r}$};
    \draw (1, 1) .. controls (1.1, 0.4) .. (1, -0.2);
    %\\
    \filldraw[black] (1.1,0.4) node[anchor=west]{$+$};
    \draw (2,0) .. controls (2.6,0.5) and (3.3,-0.5) .. (4,0);
    \filldraw[black] (2.5,0.3) node[anchor=west]{$\arm(L)|_{t=0}$};
    \filldraw[thick,-to](2,0)--(2,0.8);
    \filldraw[black] (1.3,0.8) node[anchor=west]{$\frac{\Phi_{\alpha}}{1-qL}$};
    \end{tikzpicture}
    \caption{Kawasaki strata, $\beta_t^0$}
    \label{Graph_Kawasaki_Strata_beta0}
    \end{figure}
    
    \begin{figure}[ht]
    \centering
    \begin{tikzpicture}[scale=1.5]
    \filldraw[black] (-3,0.75) node[anchor=west]{$\frac{\Phi_{\alpha}}{1-\zeta_r q L^{1/r}}$};
    \filldraw[black] (-4,0.2) node[anchor=west]{$\displaystyle\sum_{r\geq 2} \sum_{\zeta_r}$};
    \filldraw[-to] (-3,0)--(-3, 0.8);
    \filldraw[ultra thick] (-3,0)--(-2,0);
    \filldraw[black] (-2,0) circle(1.5pt);
    \filldraw[black] (-2.1,0.2) node[anchor=west]{$t$};
    \draw (-3.1, 1) .. controls (-3.2, 0.4) .. (-3.1,-0.2);
    \filldraw[black] (-1.5,0.4) node[anchor=west]{$+$};
    %\\
    \filldraw[ultra thick] (0, 0.9)--(0, 0);
    \filldraw[-to] (0,0.9)--(-0.8,0.9);
    \filldraw[black] (-1.1, 0.65) node[anchor=west]{$\frac{\Phi_{\alpha}}{1-\zeta_r q L^{1/r}} $ };
    \draw (0,0.4).. controls (0.25, 0.5) and (0.5, 0.3) ..(0.7,0.4);
    \filldraw[black] (0,0) circle(1.5pt);
    \filldraw[black] (0,0.6) node[anchor=west]{$\leg_r(L)$};
    \filldraw[black] (0,0) node[anchor=west]{$t$};
    \draw (1, 1) .. controls (1.1, 0.4) .. (1, -0.2);
    %\\
    \filldraw[black] (-3, -1.1) node[anchor=west]{ $+$ };
    \filldraw[thick] (2.3, -0.6)--(2.3, -1.5);
    \filldraw[-to] (2.3,-0.6)--(1.5,-0.6);
    \filldraw[black] (1.2, -0.85) node[anchor=west]{$\frac{\Phi_{\alpha}}{1- q L} $ };
    \draw (2.3,-1.05) .. controls (2.7, -0.75) and (3.1, -1.35) ..  (3.5, -1.05);
    \filldraw[black] (2.5,-0.8) node[anchor=west]{$\arm(L)|_{t=0}$};
    \filldraw[black] (2.3,-1.5) circle(1.5pt);
    \filldraw[black] (2.3,-1.5) node[anchor=west]{$t$};
    %\\
    \filldraw[black] (0.4,-1.1) node[anchor=west]{$+$};
    \draw (-1.8,-1.5) .. controls (-1.2,-1) and (-0.5,-2) .. (0.2,-1.5);
    \filldraw[thick,-to](-1.8,-1.5)--(-1.8,-0.7);
    \filldraw[black] (-2.5,-0.7) node[anchor=west]{$\frac{\Phi_{\alpha}}{1-qL}$};
    \filldraw[black] (-1.3,-1.37) circle(1.5pt);
    \filldraw[black] (-1.4,-1.17) node[anchor=west]{$t$};
    \end{tikzpicture}
    \caption{Kawasaki strata, $\beta_t^1$}
    \label{Graph_Kawasaki_Strata_beta1}
    \end{figure}

    \begin{figure}[ht]
    \centering
    \begin{tikzpicture}[scale=1.5]
    \filldraw[thick] (2.3, -0.6)--(2.3, -1.5);
    \filldraw[-to] (2.3,-0.6)--(1.5,-0.6);
    \filldraw[black] (1.2, -0.85) node[anchor=west]{$\frac{\Phi_{\alpha}}{1- q L} $ };
    \draw (2.3,-1.05) .. controls (2.7, -0.75) and (3.1, -1.35) ..  (3.5, -1.05);
    \filldraw[black] (2.4,-0.8) node[anchor=west]{$\arm(L)$};
    \filldraw[black] (2.3,-1.5) circle(1.5pt);
    \filldraw[black] (2.3,-1.5) node[anchor=west]{$t$};
    %\\
    \filldraw[black] (0.4,-1.1) node[anchor=west]{$+$};
    \draw (-1.8,-1.5) .. controls (-1.2,-1) and (-0.5,-2) .. (0.2,-1.5);
    \filldraw[thick,-to](-1.8,-1.5)--(-1.8,-0.7);
    \filldraw[black] (-2.5,-0.7) node[anchor=west]{$\frac{\Phi_{\alpha}}{1-qL}$};
    \filldraw[black] (-1.3,-1.37) circle(1.5pt);
    \filldraw[black] (-1.4,-1.17) node[anchor=west]{$t$};
    \filldraw[black] (-1,-1.45) circle(1.5pt);
    \filldraw[black] (-1.1,-1.25) node[anchor=west]{$t$};
    \filldraw[black] (0,-1.6) circle(1.5pt);
    \filldraw[black] (-0.1,-1.4) node[anchor=west]{$t$};
    \filldraw[black] (-0.7,-1.3) node[anchor=west]{$\dots$};
    \end{tikzpicture}
    \caption{Kawasaki strata, $\beta_t^{\geq 2}$}
    \label{Graph_Kawasaki_Strata_beta2}
    \end{figure}

Notations are explained below. See \cite{Chou_Lee_2022_QKGVI} or \cite{Givental_Tonita_2011} for more detail. 
\begin{itemize}
    \item The black points denote the marked points.
    \item $\delta_{\zeta}:= \delta_{\zeta}(J^K(0);q)$, and $\arm(q):= \arm(J^K(t);q)$.
    \item Given a stable map with symmetry $g$, the action of $g$ fixes the marked points and acts on $L_1$ with an eigenvalue, which is denote by $\zeta_r$. $\displaystyle \sum_{\zeta_r}$ denotes the sum over all primitive $r$-th roots of unity.
    \item The first marked point is denoted as an arrow with input $ \frac{\Phi_{\alpha}}{1-\zeta_r q L^{1/r}}$ for the stem theory and $\frac{\Phi_{\alpha}}{1-qL}$ for the fake theory.
    \item The vertical line denotes the curve that contracts to a point under the stable map. 
    \item The stem curve is emphasize as a thick line.
\end{itemize}
Note that $\arm(L)$ only has monomials in $\beta_t^{\leq 1}$ since stem theory only have that.  Hence the right graph in Figure~\ref{Graph_Kawasaki_Strata_beta2} contributes terms with $\beta_t^{\leq 2}$. All $\beta_t^{\geq 3}$ terms comes only from the left graph in Figure~\ref{Graph_Kawasaki_Strata_beta2} and the virtual Hirzebruch--Riemann--Roch for \emph{schemes} applies. For those, the usual divisor axiom in Gromov--Witten theory gives $e^{\beta_t}$ as explained above.
\end{remark}

\section{Multiple cover formula and virtual Clemens conjecture}
In this section, we study the $K$-theoretic multiple cover formula at genus zero and give a heuristic derivation of 
%Conjecture~\ref{conjectureJK} and 
Theorem~\ref{conjecture_npt} based on the ``Virtual Clemens Conjecture'', a folklore ansatz for enumerative geometry on Calabi--Yau threefolds. 

We note that Jockers and Mayr have already had a similar computation for the ``resolved conifold'' in \cite[\S~2.3]{Jockers_Mayr_2019} where the equivariant quantum $K$-theory was employed.

\subsection{Multiple cover formula}
\begin{lemma} \label{l:4.4}
For the total space $X_{-1,-1}$ of $\O(-1) \oplus \O (-1) \to P^1$, the Gopakumar--Vafa invariants $\op{GV}_{0,1} =1$ (genus zero and degree $1$) and $\op{GV}_{g,d} =0$ otherwise
\end{lemma}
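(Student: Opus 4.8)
The plan is to recognize $X_{-1,-1}$ as the resolved conifold and to reduce all of its curve counts to the single rigid rational curve given by the zero section. First I would observe that the only compact holomorphic curve in $X_{-1,-1}$ is the zero-section $P^1$, so every stable map representing the class $d[P^1]$ (with $d\geq 1$) has image contained in this $P^1$. Consequently the moduli space reduces to
\[
\M_{g,0}(X_{-1,-1}, d[P^1]) \cong \M_{g,0}(P^1, d),
\]
and the virtual fundamental class is obtained by capping $[\M_{g,0}(P^1,d)]^{\vir}$ against the Euler class of the obstruction bundle $R^1\pi_* f^*(\O(-1)\oplus\O(-1))$, where $\pi$ and $f$ denote the universal curve and the universal stable map. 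This is the standard local obstruction-bundle model for a $(-1,-1)$-curve.

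The genus-zero invariants are then computed by the multiple cover (Aspinwall--Morrison) formula. I would carry out a torus localization on $\M_{0,0}(P^1,d)$ against the above Euler class; the outcome is
\[
\GW_{0,d} = \frac{1}{d^3}, \qquad d\geq 1.
\]
Feeding this into the M\"obius-inversion definition \eqref{e:1.2} of the genus-zero Gopakumar--Vafa invariants gives, for $\beta = d[P^1]$,
\[
\GV_{0,d} = \sum_{k\mid d}\frac{\mu(k)}{k^3}\,\GW_{0,d/k} = \frac{1}{d^3}\sum_{k\mid d}\mu(k) = \frac{1}{d^3}\,[\,d=1\,],
\]
using $\GW_{0,d/k} = (d/k)^{-3}$ together with the identity $\sum_{k\mid d}\mu(k) = [\,d=1\,]$. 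Hence $\GV_{0,1} = 1$ and $\GV_{0,d} = 0$ for all $d\geq 2$, which settles the genus-zero part.

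For the remaining higher-genus (and higher-degree) vanishing I would compare the full Gopakumar--Vafa relation \eqref{e:0.1} with the known all-genus free energy of the resolved conifold,
\[
\sum_{g\geq 0}\sum_{d\geq 1}\GW_{g,d}\,q^{d}\lambda^{2g-2} = \sum_{d\geq 1}\frac{q^{d}}{d\,(2\sin(d\lambda/2))^{2}},
\]
which follows from the same localization (or from the topological vertex). The right-hand side is precisely the contribution to \eqref{e:0.1} of a single class with $\GV_{0,[P^1]} = 1$ and all other $\GV_{g,\beta} = 0$; since the transform \eqref{e:0.1} is invertible (degree by degree in $q$ and genus by genus in $\lambda$), this forces $\GV_{g,d} = 0$ for every $(g,d)\neq(0,1)$ with $d\geq 1$.

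I expect the main obstacle to be the rigorous justification of the multiple cover formula $\GW_{0,d}=1/d^3$, equivalently the closed form of the conifold free energy above: the localization computation requires controlling the fixed-locus contributions of the obstruction bundle $R^1\pi_* f^*(\O(-1)\oplus\O(-1))$ over the graph strata of $\M_{g,0}(P^1,d)$ and assembling them into the stated generating function. Once this geometric input is in hand, the passage to Gopakumar--Vafa invariants is the purely arithmetic M\"obius and $(2\sin)$-expansion recorded above.
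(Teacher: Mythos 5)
Your proposal is correct and takes essentially the same route as the paper: known multiple-cover formulas in Gromov--Witten theory combined with the defining Gopakumar--Vafa relation \eqref{e:0.1}/\eqref{e:1.2}, where your resummed conifold free energy $\sum_{d\geq 1} q^d/\bigl(d\,(2\sin(d\lambda/2))^2\bigr)$ is exactly the genus-by-genus package of the Aspinwall--Morrison ($g=0$), BCOV/Graber--Pandharipande ($g=1$), and Faber--Pandharipande ($g\geq 2$) formulas that the paper cites. The only difference is presentational: the paper quotes those closed formulas genus by genus, while you derive/cite them via the obstruction-bundle localization and then invert the transform at the level of generating functions.
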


\begin{proof}
This follows from the Gopakumar--Vafa equation \eqref{e:1.2}, and the results in Gromov--Witten theory: for $g=0$ the Voisin--Aspinwall--Morrison formula \cite{Voisin_1996}\cite{Aspinwall_Morrison_1993}
\[
 \op{GW}_{0,0,d} = \frac{1}{d^3},
\]
for $g=1$, the BCOV and Graber--Pandharipande formula \cite{Graber_Pandharipande_1997}
\[
 \op{GW}_{1,0,d} = \frac{1}{12 d},
\]
and for $g \geq 2$ the Faber--Pandharipande formula \cite{Faber_Pandharipande_1998}
\[
 \op{GW}_{g,0,d} = \frac{|B_{2g}| d^{2g-3}}{2g \cdot (2g-2)!}.
\]
\end{proof}

In order to consider the small $J$-function of $X_{-1,-1}$ in quantum $K$-theory, we consider its compatification by the ``infinity divisor''. Let 
\[
 Y_{-1,-1} := P_{P^1}(\mathcal{O}(-1)\oplus \mathcal{O}(-1)\oplus \mathcal{O} ).
 \]
Let $P = \pi^*\mathcal{O}(-1)$ with $\pi: Y_{-1,-1} \rightarrow P^1$, and $T = \mathcal{O}(D_{\infty})$ with $D_{\infty}\subset Y_{-1,-1}$, the infinity divisor. We denote $Q^r := Q^{r\ell}$, where 
\[
 \ell := [P^1] \xhookrightarrow{0} Y_{-1,-1}
\] 
is the line class in the ``zero section'' of the projective bundle. In the following, we consider the \emph{specialized} small $I$-function and $J$-function of $Y_{-1,-1}$ only for the curve classes in the zero section, i.e., multiples of $\ell$.

\begin{lemma} \label{l:4.5} 
The small $I$-function for %(a toric completion of) 
$Y_{-1,-1}$, with curve classes in the zero section, is
\[
  I^{Y_{-1,-1}}(q, Q) = (1-q) \left[ 1+ \sum_{r=1}^{\infty} Q^r \frac{(1-PT)^2 \prod_{m=1}^{r-1}(1-PT q^m)^2}
  {(PT)^{2r}q^{r(r-1)} \prod_{m=1}^r (1-Pq^m)^2} \right].
\]
\end{lemma}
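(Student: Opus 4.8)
The plan is to treat $Y_{-1,-1}$ as a smooth projective toric variety and read off its small $K$-theoretic $I$-function directly from the combinatorial (Cox/GIT) data, via the standard $q$-hypergeometric recipe, and then specialize the Novikov variable to curve classes supported on the zero section and simplify to the stated closed form. Since the lemma only asserts the value of the $I$-function, the content is a computation rather than an appeal to a mirror theorem.

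First I would fix the toric presentation. Writing $Y_{-1,-1}$ as the geometric quotient of $(\C^2_x\setminus 0)\times(\C^3_u\setminus 0)$ by $(\C^*)^2$ with weights $\deg(x_0)=\deg(x_1)=(1,0)$, $\deg(u_0)=\deg(u_1)=(-1,1)$ and $\deg(u_2)=(0,1)$, one checks that the open locus $\{u_2\neq 0\}$ recovers $\mathrm{Tot}(\O(-1)\oplus\O(-1))$ with zero section $\{u_0=u_1=0\}$ (the residual $\lambda$-action gives the fiber coordinates $u_0,u_1$ weight $\lambda^{-1}$, matching $\O(-1)$), so $\ell$ is the base $P^1$ in this chart and $D_\infty=\{u_2=0\}$. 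I would then record the five toric divisors and their classes in $\mathrm{Pic}(Y_{-1,-1})$, identifying $\O(D_{x_0})=\O(D_{x_1})=P^{-1}$, $\O(D_{u_0})=\O(D_{u_1})=PT$, and $\O(D_{u_2})=T$, where $P=\pi^*\O(-1)$ and $T=\O(D_\infty)$ as in the statement.

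Next I would write the small $K$-theoretic $I$-function of the toric variety as a sum over effective curve classes $d$ of $Q^d$ times the product over the toric divisors $\rho$ of the factor $\prod_{m\le 0}(1-\O(D_\rho)^{-1}q^m)/\prod_{m\le\langle D_\rho,d\rangle}(1-\O(D_\rho)^{-1}q^m)$, with the overall dilaton normalization $(1-q)$ so that $I|_{Q=0}=1-q$. Restricting to curve classes in the zero section amounts to keeping only $d=r\ell$ (setting the fiber Novikov variable to zero), so only these terms survive. Using $\langle A,\ell\rangle=1$ and $\langle B,\ell\rangle=0$, where $\O(A)=P^{-1}$ and $\O(B)=T$, I would compute the degrees $\langle D_\rho,r\ell\rangle=r,\,r,\,-r,\,-r,\,0$ for $D_{x_0},D_{x_1},D_{u_0},D_{u_1},D_{u_2}$ respectively.

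Finally I would assemble and simplify the product. The divisors $D_{x_i}$ (degree $r$, bundle $P^{-1}$) contribute $\prod_{m=1}^{r}(1-Pq^m)^{-2}$, matching the denominator; $D_{u_2}$ (degree $0$) contributes $1$; and $D_{u_0},D_{u_1}$ (degree $-r$, bundle $PT$) contribute $\big(\prod_{j=0}^{r-1}(1-(PT)^{-1}q^{-j})\big)^2$. The computational heart, and the step I expect to be the main obstacle, is rewriting this negative-degree factor: pulling $-(PT)^{-1}q^{-j}$ out of each term turns it into $(1-PT)^2\prod_{m=1}^{r-1}(1-PTq^m)^2$ divided by $(PT)^{2r}q^{r(r-1)}$, where the $q$-power collects $2\sum_{j=0}^{r-1}j=r(r-1)$ and the overall sign cancels in the square. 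Combining the three contributions with $Q^r$ and the overall $(1-q)$ gives exactly the asserted formula. The rest is bookkeeping, chiefly pinning down the weight signs so that $P$ and $T$ (rather than their inverses) appear as written; I would confirm this by checking the $r=1$ term $Q(1-PT)^2/\big((PT)^2(1-Pq)^2\big)$ against a direct computation.
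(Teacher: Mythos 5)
Your proposal is correct and takes essentially the same route as the paper: both rest on Givental's toric $K$-theoretic $I$-function (the paper by citation, you by writing out the GIT/Cox data and the $q$-hypergeometric recipe), arrive at the intermediate form $(1-q)\bigl[1+\sum_{r\geq 1} Q^r \prod_{m=-r+1}^{0}(1-P^{-1}T^{-1}q^m)^2/\prod_{m=1}^{r}(1-Pq^m)^2\bigr]$, and then perform the same pull-out-the-factor manipulation (collecting $(PT)^{2r}q^{r(r-1)}$, with signs cancelling in the square) to reach the stated presentation. The only difference is that you make explicit the toric weights, divisor classes, intersection numbers, and the "simple manipulation" that the paper leaves to the reader, and all of those details check out.
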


\begin{proof}
This lemma follows from the computations of the $I$-functions for toric manifolds via fixed point localization by A.~Givental and collaborators. See \cite{Givental_Tonita_2011, Givental_PEV_Toric_2015}. Their formula for small $I$-function gives
\[
  I^{Y_{-1,-1}}(q, Q) = (1-q) \left[ 1+ \sum_{r=1}^{\infty} Q^r \frac{\prod_{m={-r+1}}^{0}(1-P^{-1} T^{-1} q^m)^2}
  {\prod_{m=1}^r (1-Pq^m)^2} \right].
\]
A simple manipulation gives the above presentation.
\end{proof}

We note that in this case the small $I$-function includes a factor of $(1-PT)^2$, the $K$-theoretic normal bundle of $P^1$ embedded in $Y_{-1,-1}$. One may think of this as an $I$-function of a toric completion $P_{P^1} (\O(-1) \oplus \O (-1) \oplus \O)$, with curve class in the base $P^1$. 
This small $I$-function is different from the small $J$-function, as it has poles at $q=0$. 
(However, it satisfies $\lim_{q \to \infty} I^{X_{-1,-1}}(q) =0$.) Using this $I$-function one can obtain the small $J$-function via the ``\emph{generalized} mirror transform'' (also known as the explicit reconstruction, or Birkhoff factorization).

\begin{proposition} \label{prop_multicover_t=0}
The small $J$-function for %(any toric completion of)  
$Y_{-1,-1}$, with curve classes in the zero section, is 
\begin{equation} \label{e:4.1}
\begin{split}
\frac{1}{1-q} & J^{Y_{-1,-1}}(q, Q) = 1 + (1-PT)^2\Big(1 +(1-P)\Big)\sum_{r \geq 1} \, Q^r a(r, q^r) % Q^r \Big( \frac{r-1}{1-q^r} + \frac{1}{(1-q^r)^2} \Big)
\\
& + (1-PT)^2(1-P) \sum_{r\geq 1} Q^r \, b(r, q^r). %\Big( \frac{r^2-1}{1-q^r} + \frac{3}{(1-q^r)^2} -\frac{2}{(1-q^r)^3} \Big) .
\end{split} 
\end{equation}
\end{proposition}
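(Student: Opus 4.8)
The plan is to recover the small $J$-function from the small $I$-function of Lemma~\ref{l:4.5} by the generalized mirror transform, i.e.\ by Birkhoff factorization, exactly as indicated in the discussion preceding the statement. Both $I^{Y_{-1,-1}}$ and $J^{Y_{-1,-1}}$ lie on the overruled Lagrangian cone $\mathcal{L}^K$ of $Y_{-1,-1}$: the former because $I$-functions of toric targets lie on the cone (the input used in Lemma~\ref{l:4.5}), the latter by definition. The small $J$-function is the unique point of $\mathcal{L}^K$ whose $\mathcal{K}_+$-projection equals $1-q$, so, as in the proof of Theorem~\ref{thm_conj}, it is determined by $\pi_+$. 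Since $I$ has poles at $q=0$ while $J$ does not, the factorization is precisely the operation stripping the $q=0$ poles while keeping the function on the cone; I would organize the entire computation as a factorization performed coefficient by coefficient in $Q^r$.

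The key simplification is that, restricted to curve classes in the zero section, the $K$-theory generators are nilpotent. Since $Y_{-1,-1}$ is a projective bundle over $\mathbb{P}^1$ and $P=\pi^*\mathcal{O}(-1)$, one has $(1-P)^2=0$ in $K^0(Y_{-1,-1})$; writing $u:=1-P$, each denominator factor linearizes as
\[
\frac{1}{1-Pq^m}=\frac{1}{1-q^m}-\frac{u\,q^m}{(1-q^m)^2},
\qquad
\frac{1}{(1-Pq^m)^2}=\frac{1}{(1-q^m)^2}-\frac{2u\,q^m}{(1-q^m)^3}.
\]
Because $u^2=0$, all cross terms across the product $\prod_{m=1}^{r}(1-Pq^m)^{-2}$ vanish and the product collapses to an expression at most linear in $u$; in particular the third-order poles in $q^m$ are produced exactly by the $u$-linear part, which is the geometric source of the $\tfrac{1}{(1-q^r)^3}$ appearing in $b(r,q^r)$. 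For the numerator I would absorb the prefactor $(1-PT)^2$ into the product, writing $(1-PT)^2\prod_{m=1}^{r-1}(1-PTq^m)^2=\bigl[\prod_{m=0}^{r-1}(1-PTq^m)\bigr]^2$, and combine it with $(PT)^{-2r}q^{-r(r-1)}$ into $\bigl[\prod_{m=0}^{r-1}\bigl((PT)^{-1}q^{-m}-1\bigr)\bigr]^2$; the unipotence of $PT$, reflecting the normal bundle of the zero section, then truncates the $PT$-expansion after finitely many terms and leaves $(1-PT)^2$ as an overall factor.

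With both products reduced, I would carry out the factorization: remove the $q=0$ pole arising from $q^{-r(r-1)}$ by the unique $\mathcal{K}_+$-valued multiplier (equivalently, project out the $\mathcal{K}_+$-part so the remainder lies in $1-q+\mathcal{K}_-$), absorbing any scalar mirror map into the Novikov variable, and read off the principal part at $q^r=1$ of the $Q^r$ coefficient. Splitting into the $u^0$- and $u^1$-sectors, I expect the $u^0$-sector to yield the second-order expression $a(r,q^r)=\tfrac{r-1}{1-q^r}+\tfrac{1}{(1-q^r)^2}$ with coefficient $(1-PT)^2$, and the $u^1$-sector to yield a combination of $a(r,q^r)$ and the third-order expression $b(r,q^r)=\tfrac{r^2-1}{1-q^r}+\tfrac{3}{(1-q^r)^2}-\tfrac{2}{(1-q^r)^3}$, so that the two sectors reassemble into the stated coefficients $(1-PT)^2\bigl(1+(1-P)\bigr)$ for $a$ and $(1-PT)^2(1-P)$ for $b$. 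A final check that $\pi_+$ of the result is $1-q$ then identifies it, by uniqueness on the cone, with the small $J$-function.

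The main obstacle will be the explicit bookkeeping in this factorization, in particular verifying that the high-order poles at $q=0$ (of order $r(r-1)$) cancel for every $r$ while the surviving principal parts organize into precisely $a$ and $b$. The matching of the $u^0$-sector to $a$ is comparatively routine; the delicate point is the $u^1$-sector, where both a second-order contribution (to be added to $a$) and the full third-order structure of $b$ must emerge with the correct numerical coefficients $r^2-1$, $3$, and $-2$. This is where an error is most likely to hide, and where the relation $(1-P)^2=0$ and the unipotence of $PT$ must be used carefully to ensure no spurious pole at $q=0$ survives.
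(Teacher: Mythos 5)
Your outline coincides with the paper's own proof: both start from the $I$-function of Lemma~\ref{l:4.5}, simplify it inside $K(Y_{-1,-1})$, and pass to the small $J$-function via the reconstruction theorem (Birkhoff factorization) together with uniqueness of the point on the cone with $\mathcal{K}_+$-projection $(1-q)\cdot 1$. The gap is in the $K$-theoretic algebra. You permit yourself only $(1-P)^2=0$ and the unipotence of $PT$, but these are not enough: the essential input, which the paper isolates as \eqref{e:4.2}, is the second defining relation $(1-PT)^2(1-T)=0$ of
\[
K(Y_{-1,-1})=\mathbb{Z}[P,T]\big/\big((1-P)^2,\ (1-PT)^2(1-T)\big).
\]
Under the overall factor $(1-PT)^2$ this relation converts every numerator factor $1-PTq^m$ into $1-Pq^m$, so the products in Lemma~\ref{l:4.5} telescope against the denominator and only $(1-Pq^r)^{-2}$ survives; equivalently, it gives $(1-PT)^3=(1-PT)^2(1-P)$, i.e.\ it identifies your numerator $(1-PT)$-corrections with your denominator $u$-corrections. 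Unipotence alone, say $(1-PT)^4=0$, holds in the strictly larger ring $\mathbb{Z}[P,T]/((1-P)^2,(1-PT)^4)$ and cannot produce this cancellation.

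Concretely, set $v=1-PT$, $u=1-P$ and expand as you propose, using only $u^2=0$ and $v^4=0$. The coefficient of $Q^r$ in $I^{Y_{-1,-1}}/(1-q)$ is
\[
\frac{1}{q^{r(r-1)}(1-q^r)^2}
\left[\,v^2+2v^3\Big(r+\sum_{m=1}^{r-1}\tfrac{q^m}{1-q^m}\Big)
-2v^2u\sum_{m=1}^{r}\tfrac{q^m}{1-q^m}+\cdots\right],
\]
the dots being $v^3u$-terms. Your $u^0$-sector therefore contains $v^3\sum_{m=1}^{r-1}\frac{q^m}{1-q^m}$, which is not a rational function of $q^r$; already for $r=3$ it has a pole at $q=-1$, which is not a cube root of unity, so this sector cannot equal $(1-PT)^2$ times $a(3,q^3)$ plus something in $\mathcal{K}_+$. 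These spurious poles cancel only \emph{across} your sectors, term by term against $-2v^2u\sum_{m=1}^{r}\frac{q^m}{1-q^m}$, and only after imposing $v^3=v^2u$ and $v^3u=0$; doing so returns exactly the paper's simplified form $(1-PT)^2\big(1+2(1-P)(r-\tfrac{q^r}{1-q^r})\big)\big/\big(q^{r(r-1)}(1-q^r)^2\big)$, from which $a(r,q^r)$ and $b(r,q^r)$ follow. A smaller but real issue is your parenthetical claim that the factorization is ``equivalently'' the projection killing the $\mathcal{K}_+$-part beyond $1-q$: that equivalence is precisely what must be proved. The paper proves it by writing the general cone point via the reconstruction ansatz of \cite{Givental_PEVIII_ER} and showing, by induction on the Novikov degree, that all mirror-map parameters vanish while the residual freedoms ($u_2$, $u_3$ in the paper's notation) multiply $I_r$ into $\mathcal{K}_+$ only; your closing uniqueness check presupposes this triviality rather than establishing it.
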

\begin{proof} 
The $K$-ring of $Y_{-1,-1}$ has the following presentation
\[
 K(Y_{-1,-1}) = \frac{\mathbb{Z} [P, T]}{\left( (1-P)^2, (1-PT)^2(1-T) \right)}.
\]
We use the relations of $K(Y_{-1,-1})$ to rewrite $I^{Y_{-1,-1}}$ as follows. Since $(1-PT)^2(1-T) =0$, we have
\begin{equation} \label{e:4.2}
\begin{split}
   &(1-PT)^2 (1-Pq^m T) = (1-PT)^2 (1-Pq^m [1-(1-T)]) \\
   = &(1-PT)^2 (1-Pq^m).  
\end{split}
\end{equation}
Therefore,
\[
\begin{split}
    & I^{Y_{-1,-1}} - (1-q)\\
    = &(1-q) \sum_{r=1}^{\infty} Q^r \frac{(1-PT)^2} %\frac{(1-P)^2}
    {(PT)^{2r}q^{r(r-1)}(1-Pq^r)^2}
    \\
    = &(1-q)\sum_{r\geq 1}Q^r\left[ \Big( \sum_{i=1}^{r-1} \frac{i}{q^{r(r-i)}}\Big)(1-PT)^2  + \Big( \sum_{i=1}^{r-1} \frac{i(2r-i+1)}{q^{r(r-i)}} \Big) (1-PT)^3 \right]
    \\
    & + (1-q) \Big((1-PT)^2 +(1-PT)^3\Big)\sum_{r\geq 1} Q^r \Big( \frac{r-1}{1-q^r} + \frac{1}{(1-q^r)^2} \Big)
    \\
    & + (1-q)(1-PT)^3 \sum_{r\geq 1}Q^r\Big( \frac{r^2-1}{1-q^r} + \frac{3}{(1-q^r)^2} -\frac{2}{(1-q^r)^3} \Big).
\end{split}
\]
In the first equality, the factor $\prod_{m=1}^{r-1} (1-PTq^m)^2$ in the numerator and $\prod_{m=1}^{r-1}(1-Pq^m)^2$ in the denominator cancel each other due to the presence of $(1-PT)^2$ and \eqref{e:4.3}.
The second equality follows from an explicit computation.

The first line after the second equal sign lies in $\mathcal{K}_+$ and the rest lies in $\mathcal{K}_-$. Consider the reconstruction theorem \cite[Theorem~2]{Givental_PEVIII_ER}
\[
\begin{split}
    &J^{Y_{-1,-1}}(q, Q) =  \sum_{r\geq 0}  I_r^{Y_{-1,-1}} Q^r  
    \\ 
    & \cdot\exp \left( \frac{ \sum_{i=0}^1\delta_i(Q)(1-Pq^{r})(1-PT q^r)^i + \sum_{i=0}^3 \e_i(Q) (1-PT q^{r})^i}{(1-q)} \right) 
    \\
    &\qquad \cdot\left( \sum_{i=0}^1 s_i(q,Q)(1-Pq^r)(1-PT q^r)^i + \sum_{i=0}^3 u_i(q,Q) (1-PTq^r)^i \right),
\end{split}
\]
for some uniquely determined $\e_i(Q)$, $\delta_i(Q)$, $s_i(q,Q)$ and $u_i (q, Q)$, where
\[
\begin{split}
    \e_i(Q)& = \sum_{j \geq 1} \e_{ij}Q^j \in \mathbb{Q}[\![Q]\!], \\
    \delta_i(Q) &=\sum_{j \geq 1} \delta_{ij}Q^j \in \mathbb{Q}[\![Q]\!],
    \\
    u_i(q,Q) 
   & = \sum_{j\geq 0} u_{ij}(q) Q^j \in \mathbb{Q}[q,q^{-1}][\![Q]\!],
   \\
   s_i(q,Q) 
   & = \sum_{j\geq 0} s_{ij}(q) Q^j \in \mathbb{Q}[q,q^{-1}][\![Q]\!].
\end{split}
\]
A direct computation by induction on the degree of the Novikov variable shows that 
\[
u_0(q,Q) = 1, \quad u_1(q,Q)=\e_i(Q)=\delta_j(Q)=s_j(q,Q)=0,
\]
where $0\leq i \leq 3$ and $0\leq j \leq 1$. Note that $u_2(q,Q)$ and $u_3(q,Q)$ will not change the $\mathcal{K}_-$ part and that the $\mathcal{K}_-$ part coincides with the definition of $a(r,q^r)$ and $b(r,q^r)$.  This concludes the proof.
\end{proof}

We go through the same argument with input $t=t_1(1-P)$.
\begin{proposition} \label{prop_multicover_t}
The $J$-function for %(any toric completion of)  
$Y_{-1,-1}$, with curve classes in the zero section, is 
\begin{equation} \label{e:4.3}
\begin{split}
\frac{1}{1-q} & J^{Y_{-1,-1}}(t, q, Q) = 1 + \frac{t_1(1-P)}{1-q}
\\
&+  (1-PT)^2\Big(1 +(1-P)\Big)\sum_{r \geq 1} \, Q^r\Big( a(r, q^r) + c(r,q, t_1) \Big) % Q^r \Big( \frac{r-1}{1-q^r} + \frac{1}{(1-q^r)^2} \Big)
\\
& + (1-PT)^2(1-P) \sum_{r\geq 1} Q^r \, \Big( b(r, q^r) + d(r,q, t_1) \Big) . %\Big( \frac{r^2-1}{1-q^r} + \frac{3}{(1-q^r)^2} -\frac{2}{(1-q^r)^3} \Big) .
\end{split} 
\end{equation}
\end{proposition}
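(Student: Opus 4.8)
The plan is to run the argument of Proposition~\ref{prop_multicover_t=0} again, now with the divisor input $t=t_1(1-P)$ switched on. Thus I would (i) write down the input-dependent $I$-function $I^{Y_{-1,-1}}(t,q,Q)$ for the zero-section classes, (ii) reduce it with the $K$-ring relations $(1-P)^2=0$ and $(1-PT)^2(1-T)=0$ exactly as in \eqref{e:4.2}, (iii) split it into its $\mathcal{K}_+$ and $\mathcal{K}_-$ parts, and (iv) apply the explicit reconstruction \cite{Givental_PEVIII_ER} to pass from $I^{Y_{-1,-1}}(t)$ to $J^{Y_{-1,-1}}(t)$. Comparing with \eqref{e:4.1}, the only new output to be produced is the pair of terms $c(r,q,t_1)$, $d(r,q,t_1)$ together with the degree-zero summand $\tfrac{t_1(1-P)}{1-q}$.

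The substantive new step is (i). I would build $I^{Y_{-1,-1}}(t,q,Q)$ from Lemma~\ref{l:4.5} by inserting the divisor $t_1(1-P)$; equivalently, one may keep the $I$-function of Lemma~\ref{l:4.5} and allow the reconstruction data to carry a degree-zero ($Q^0$) piece equal to the input. Two effects must be separated. Since $1-P$ is nilpotent, its genuinely linear contribution is finite and supplies the polynomial input $t_1(1-P)$ to $\pi_+$, i.e. the term $\tfrac{t_1(1-P)}{1-q}$ after division by $1-q$. The scalar exponential $e^{t_1}$ occurring in $c$ and $d$, on the other hand, must be the exponential of the primitive degree $\beta_t=\int_\ell \ch(t)=t_1$ and \emph{not} of the cover degree $rt_1$: the $K$-theoretic divisor input is organized by the single BPS curve of Lemma~\ref{l:4.4}, not by the naive cohomological Novikov shift $Q^r\mapsto Q^r e^{rt_1}$.

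For step (iv) I would write the reconstruction in the form used for Proposition~\ref{prop_multicover_t=0},
\[
J^{Y_{-1,-1}}(t)=\sum_{r\geq 0} I_r^{Y_{-1,-1}}(t)\,Q^r\,\exp\!\Big(\tfrac{(\cdots)}{1-q}\Big)\,\big(\cdots\big),
\]
and solve for the Birkhoff data $\e_i,\delta_i,s_i,u_i$ by induction on the Novikov degree. In contrast to the $t=0$ case, the input now shifts the mirror map, so the exponential factors $\exp(\cdot/(1-q))$ are nontrivial; expanding them should reproduce both the full pole structure of $c$ and $d$ in $q$ and the scalar $e^{t_1}$. Reading off the $\mathcal{K}_-$ part and matching with the explicit formulas for $c(r,q,t_1)$ and $d(r,q,t_1)$ completes the identification, and the fact that a point of $\mathcal{L}^K$ is determined by its $\pi_+$-projection confirms that the reconstructed function is $J^{Y_{-1,-1}}(t)$.

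The main obstacle is precisely the dichotomy flagged in step (i): incorporating the divisor input in $K$-theory so that $\beta_t$ enters through $e^{t_1}$ attached to the primitive class $\ell$ rather than through $e^{rt_1}$, and then verifying that the Birkhoff factorization distributes this dependence into exactly the combinations $c$ and $d$. Once the input $I$-function is pinned down, the rest is an explicit partial-fraction computation in $q$, entirely parallel to \eqref{e:4.2} and to the expansion lemmas used for $\tilde J$.
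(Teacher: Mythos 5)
Your proposal is correct and takes essentially the same route as the paper: the paper also treats the divisor input as the degree-zero part of the reconstruction data, first applying the flow $\sum_{r\geq 0} Q^r \exp\!\left(t_1\tfrac{1-Pq^r}{1-q}\right) J^{Y_{-1,-1}}(0,q,Q)$ to the $t=0$ result of Proposition~\ref{prop_multicover_t=0} (landing at a shifted input $t^*$), and then running the Birkhoff induction exactly as you describe, with only the mirror-map datum $\epsilon_2(t_1,Q)=-F_1(t_1,Q)$ nontrivial and with $\epsilon_3,u_2,u_3$ unable to affect the $\mathcal{K}_-$ part. As in your plan, the final identification of the $\mathcal{K}_-$ part with $c(r,q,t_1)$ and $d(r,q,t_1)$ (in particular the appearance of $e^{t_1}$ rather than $e^{rt_1}$) is deferred to a long direct computation.
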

\begin{proof}
We compute $J^{Y_{-1,-1}}$ in two steps. Consider the flow:
\[
\begin{split}
    J^{Y_{-1,-1}}(t^*,q,Q) =\sum_{r\geq 0} Q^r \exp \left( t_1 \frac{1-Pq^r}{1-q} \right) J^{Y_{-1,-1}}(0,q,Q),
\end{split}
\]
where 
\[
    t^* = t + (1-PT)^2(E_1(t_1,q,Q)) + (1-P)(1-PT)^2(E_2(t_1,q,Q))
\]
with
\[
\begin{split}
    E_1(t_1,q,Q) & \in F_1(t_1,Q) + (1-q) \cdot  \mathbb{Q}[t_1,q^{-1},q][\![Q]\!];
    \\
    E_2(t_1,q,Q) & \in \mathbb{Q}[t_1,q^{-1},q][\![Q]\!].
\end{split}
\]
Then we consider the explicit reconstruction
\[
\begin{split}
    &J^{Y_{-1,-1}}(t, q, Q) =  \sum_{r\geq 0}  J_r^{Y_{-1,-1}}(t^*,q,Q) Q^r  
    \\ 
    & \cdot\exp \left( \frac{ \sum_{i=0}^1\delta_i(t_1,Q)(1-Pq^{r})(1-PT q^r)^i + \sum_{i=0}^3 \e_i(t_1,Q) (1-PT q^{r})^i}{(1-q)} \right) 
    \\
    &\quad \cdot\left( \sum_{i=0}^1 s_i(t_1,q,Q)(1-Pq^r)(1-PT q^r)^i + \sum_{i=0}^3 u_i(t_1,q,Q) (1-PTq^r)^i \right),
\end{split}
\]
for some uniquely determined $\delta_i(t_1,Q)$, $\epsilon_i(t_1,Q)$, $s_i(t_1,q,Q)$, and $u_i(t_1,q,Q)$, where
\[
\begin{split}
    \epsilon_2(t_1,Q) &= - F_1(t_1,Q), \quad u_0(t_1,q,Q) = 1,
    \\
    u_1(t_1,q,Q) &=  \epsilon_i(t_1,Q)=\delta_i(t_1,Q)=s_i(t_1,q,Q)=0\quad \forall i=0,1.
\end{split}
\]
Note that for any choice of $\epsilon_3(t_1,Q)$, $u_2(t_1,q,Q)$, and $u_3(t_1,q,Q)$ will not change the $\mathcal{K}_-$ part. The proposition follows from a long and direct computation.
\end{proof}
\begin{remark}
The invariance of $\mathcal{K}_-$ part in the proof of Proposition~\ref{prop_multicover_t=0} is not surprising. Any choice of $u_2(q,Q)$ and $u_3(q,Q)$ corresponds to the input lies in $(1-q)\cdot H^{\geq 4}(Y_{-1,-1})[\![q, Q]\!]$ and hence it gives no contribution by Lemma~\ref{lemma_vanishingdeg2}.

The same argument works for Proposition~\ref{prop_multicover_t} with one addition. Let $\mathbf{t} \in  t_1(1-P) + \sum_{i=2}^3\epsilon_i(t_1,Q)(1-PT)^i + (1-q) \cdot H^{\geq 4}[t_1,q^{-1},q][\![Q]\!] $. We have
\[
\begin{split}
\sum_{n\geq 0}\sum_{r\geq 0} & \frac{Q^r}{n!}\langle \frac{1}{1-qL}, \mathbf{t},\dots, \mathbf{t}\rangle_{0,n+1,r}^{Y_{-1,-1}}
 =\frac{ t_1 \epsilon_2(t_1,Q)}{1-q}.
\end{split}
\]
by Lemma~\ref{lemma_vanishingdeg2}. Note that all terms equal to zero except the case when $n=2$ and $r=0$. In that case, the contribution only involves $\epsilon_2(t_1,Q)$.
\end{remark}

The following corollary was first obtained by Jockers and Mayr.

\begin{corollary}[Multiple cover formula, {\cite[\S~2.3]{Jockers_Mayr_2019} } ] \label{c:4.8}
For $t = t_1(1-P)$, the small $J$-function for $X_{-1,-1}$ with input $0$ and $t$ are:
\[
\begin{split}
\frac{1}{1-q} & J^{X_{-1,-1}}(0,q,Q) = 1 + \Big(1 +(1-P)\Big)\sum_{r \geq 1} \, Q^r a(r, q^r) 
\\
& + (1-P) \sum_{r\geq 1} Q^r \, b(r, q^r) ;
\\
\frac{1}{1-q} & J^{X_{-1,-1}}(t, q, Q) = 1 + \frac{t_1(1-P)}{1-q}
\\
&+  \Big(1 +(1-P)\Big)\sum_{r \geq 1} \, Q^r\Big( a(r, q^r) + c(r,q, t_1) \Big) % Q^r \Big( \frac{r-1}{1-q^r} + \frac{1}{(1-q^r)^2} \Big)
\\
& + (1-P) \sum_{r\geq 1} Q^r \, \Big( b(r, q^r) + d(r,q, t_1) \Big) .
\end{split}
\]
\end{corollary}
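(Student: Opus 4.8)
The plan is to read off Corollary~\ref{c:4.8} from the compact computations in Proposition~\ref{prop_multicover_t=0} and Proposition~\ref{prop_multicover_t} by deleting the factor $(1-PT)^2$. Both propositions have the form $\tfrac{1}{1-q}J^{Y_{-1,-1}} = 1 + (1-PT)^2\,G$, where $G$ is a series in $P$ and $t_1$ that does not involve $T$, and the corollary asserts that the small $J$-function of the open Calabi--Yau $X_{-1,-1}$ is the same expression with the factor removed, $\tfrac{1}{1-q}J^{X_{-1,-1}} = 1 + G$.

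First I would set up the moduli comparison. Since the only curve classes are multiples $r\ell$ of the zero-section class, every stable map factors through the zero section $P^1 \subset X_{-1,-1} \subset Y_{-1,-1}$ and misses the infinity divisor $D_\infty$. Hence $\M_{0,n}(X_{-1,-1}, r\ell)$, $\M_{0,n}(Y_{-1,-1}, r\ell)$, and $\M_{0,n}(P^1, r)$ are canonically identified, with the same virtual structure sheaf, because the normal bundle of the zero section is $\O(-1)\oplus\O(-1)$ in both $X_{-1,-1}$ and $Y_{-1,-1}$. The evaluation map factors as $\ev_1 = i\circ\overline{\ev}_1$, with $\overline{\ev}_1\colon\M_{0,n}(P^1,r)\to P^1$ and $i\colon P^1\hookrightarrow Y_{-1,-1}$ the zero-section inclusion.

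Next I would locate the factor. As $X_{-1,-1}$ is non-compact its $K$-theoretic pairing is degenerate, so I would define the small $J$-function of $X_{-1,-1}$ as the \emph{reduced} pushforward, recording contributions on the compact zero section via $\overline{\ev}_{1*}$ and transporting to $K(X_{-1,-1})\cong K(P^1)$ by the retraction, as in the equivariant set-up of \cite{Jockers_Mayr_2019}. For $Y_{-1,-1}$ one pushes one step further, $\ev_{1*} = i_*\circ\overline{\ev}_{1*}$, and by the projection formula $i_*\gamma = (i_*1)\cdot\tilde\gamma = (1-PT)^2\,\tilde\gamma$ for any lift $\tilde\gamma\in K(Y_{-1,-1})$ of $\gamma$, where $i_*1=(1-PT)^2$ is the class of the zero section in $K(Y_{-1,-1})$ (its restriction $i^*i_*1=\lambda_{-1}(N^\vee)$ to $P^1$ being the normal Euler class, which vanishes by the Calabi--Yau condition). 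Choosing the lifts in $\mathbb{Z}[P]$ produces exactly $\tfrac{1}{1-q}J^{Y_{-1,-1}}=1+(1-PT)^2 G$ with $G$ the reduced series, so that $\tfrac{1}{1-q}J^{X_{-1,-1}}=1+G$; the input case $t=t_1(1-P)$ follows verbatim from Proposition~\ref{prop_multicover_t} with $a\mapsto a+c$, $b\mapsto b+d$, the degree-zero term $\tfrac{t_1(1-P)}{1-q}$ being unaffected.

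The main obstacle will be justifying that this factor-removal is compatible with the $I$-to-$J$ reconstruction and with the equivariant definition of $J^{X_{-1,-1}}$. One cannot divide by $(1-PT)^2$ inside $K(X_{-1,-1})$, where it equals $(1-P)^2=0$; the resolution is to carry out the entire Birkhoff factorization of Proposition~\ref{prop_multicover_t=0} on the projective toric $Y_{-1,-1}$, where $(1-PT)$ is a nonzero nilpotent, and to note that the mirror map touches only the $T$-independent series $G$. Indeed, by Lemma~\ref{lemma_vanishingdeg2} together with $(1-PT)^2\in H^{\ge4}(Y_{-1,-1})$, the extra fiber directions contribute nothing to the reconstruction beyond the identity, exactly the mechanism invoked in the remark following Proposition~\ref{prop_multicover_t}. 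Thus reconstruction commutes with stripping the $i_*$-factor, the coefficients $a,b,c,d$ are shared by $X_{-1,-1}$ and $Y_{-1,-1}$, and matching with the fixed-point computation of \cite{Jockers_Mayr_2019} completes the proof.
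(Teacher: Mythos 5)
Your proposal is correct and takes essentially the same route as the paper: both deduce Corollary~\ref{c:4.8} from Propositions~\ref{prop_multicover_t=0} and \ref{prop_multicover_t} by identifying the moduli spaces (and virtual structure sheaves) of class $r\ell$ stable maps in $X_{-1,-1}$ and $Y_{-1,-1}$ via the common normal bundle $\O(-1)\oplus\O(-1)$ of the zero section, and then recognizing that the two $J$-functions differ exactly by the factor $(1-PT)^2$ attached to the nonzero-degree terms. Your projection-formula derivation of $i_*1=(1-PT)^2$ and the check that stripping this factor is well defined simply make explicit what the paper compresses into ``different bases of the $K$-groups and the Poincar\'e pairings.''
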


\begin{proof}
Since the zero section $P^1$ has normal bundle $\O(-1) \oplus \O(-1)$, the quantum $K$-invariants of $r \ell$ in $X_{-1,-1}$ are exactly the same as those in $Y_{-1,-1}$. The only difference in $J$-functions comes from different bases of the $K$-groups and the Poincar\'e pairings. The net result is the removal of the factor $(1-PT)^2$ from the specialized $J^{Y_{-1,-1}}(q, Q)$ for non-zero degree terms in Propositon~\ref{prop_multicover_t=0} and \ref{prop_multicover_t}.
\end{proof}

\subsection{Virtual Clemens conjecture}

We now give a heuristic derivation and an interpretation of the relationship between Gopakumar--Vafa invariants and quantum $K$-invariants at genus zero by a multiple cover formula. This has served to guide us in our search for the current formulation of %Conjecture~\ref{conjectureJK} and 
Theorem~\ref{conjecture_npt}, even though the actually proof follows a completely different approach. Of course, the original formulations of Jockers--Mayr \cite{Jockers_Mayr_2019} and Garoufalidis--Scheidegger \cite{Garoufalidis_Scheidegger_2022} have been helpful too.

The following ``folklore conjecture'' in enumerative geometry is our starting point.

\begin{conjecture}[Virtual Clemens conjecture]
For the purpose of computing genus zero enumerative invariants on Calabi--Yau threefolds $X$, one may assume that there are only finitely many isolated rational curves $\{C_i \subset X \}$. Furthermore, $C_i$ are all smooth $(-1,-1)$ curves. 
\end{conjecture}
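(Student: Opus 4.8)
The plan is to exploit the deformation invariance of genus zero enumerative invariants on Calabi--Yau threefolds. Because $\GW_{0,\beta}$ (and hence the $\GV_\beta$ defined through \eqref{e:1.2}) is an integral against a virtual fundamental class, it is unchanged under deformation of the complex structure of $X$ within its own deformation family. So I would first try to replace $X$ by a suitably generic deformation $X'$ in which every rational curve of bounded degree becomes geometrically simple, and then argue that the invariants on $X'$ are assembled purely from local contributions of finitely many isolated $(-1,-1)$ curves. Concretely: fix a class $\beta$ (the degree bound needed for a given invariant), and seek $X'$ such that every rational curve in $X'$ representing a class $\le\beta$ in the Mori cone order is a smooth, rigid rational curve with normal bundle $\O(-1)\oplus\O(-1)$. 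Granting this, $\M_{0,0}(X',d)$ decomposes set-theoretically into loci indexed by the finitely many $C_i$ and their multiple covers, onto which the virtual class localizes.

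The second step is then essentially already carried out in the preceding subsection. On each such locus the computation reduces to the local model $X_{-1,-1}=\op{Tot}(\O(-1)\oplus\O(-1)\to P^1)$, which by Lemma~\ref{l:4.4} carries a single genus zero BPS state, $\GV_{0,1}=1$. The $K$-theoretic contribution of its degree $r$ covers is exactly the series $a(r,q^r)$ and $b(r,q^r)$---together with the $c(r,q,t_1),d(r,q,t_1)$ corrections when $t\ne0$---as computed in Corollary~\ref{c:4.8} (via Proposition~\ref{prop_multicover_t}). Summing the contribution of each $C_i$, weighted by the integer count of rigid curves in its class (which is precisely $\GV_\beta$), would reproduce the generating function \eqref{e:jtilde} and thereby give the ``first principles'' derivation of Theorem~\ref{conjecture_npt}.

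The main obstacle is, unambiguously, the first step: producing a deformation that realizes \emph{all} low-degree rational curves as isolated smooth $(-1,-1)$ curves. This is the classical Clemens conjecture, known only in low degrees (e.g. for the quintic roughly through degree $11$) and open in general; nothing a priori rules out positive-dimensional families, non-reduced components, or curves with normal bundle $\O\oplus\O(-2)$, all of which would wreck the clean local decomposition. Even granting Clemens for a single threefold, one would need the analogous genericity for \emph{every} Calabi--Yau threefold, which is entirely out of reach. For this reason the argument above can only be a heuristic, and the actual proof of Theorem~\ref{conjecture_npt} in Sections~\ref{s:2}--\ref{s:3} deliberately bypasses the curve geometry: Givental's adelic characterization lets one verify that $\tilde J$ lies on the Lagrangian cone using only the structure of the Kawasaki strata, so that the same multiple cover series emerge without ever asserting that the $C_i$ are isolated $(-1,-1)$ curves.
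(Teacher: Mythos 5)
There is no proof in the paper to compare against: the statement you are addressing is labeled a \emph{Conjecture} and is treated by the authors as a folklore ansatz. It is never proven; it is only \emph{assumed} in the final subsection in order to give a heuristic re-derivation of Theorem~\ref{conjecture_npt}, and the authors stress that their actual proof of that theorem (via the adelic characterization in Sections~\ref{s:2}--\ref{s:3}) deliberately avoids the conjecture precisely because it remains open. So the right assessment of your proposal is that it contains a genuine, and in fact unavoidable, gap --- one you yourself identify in your final paragraph. The first step, producing a deformation $X'$ of $X$ in which every rational curve of class $\le \beta$ is a smooth isolated $(-1,-1)$ curve, is not a reduction of the problem: it \emph{is} the problem. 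Deformation invariance of $\GW_{0,\beta}$ (hence of $\GV_\beta$ via \eqref{e:1.2}) only helps if such an $X'$ exists in the deformation family, and this is exactly the (strong form of the) Clemens conjecture, open beyond low degrees even for the quintic. Worse, the literal geometric statement fails for specific Calabi--Yau threefolds --- the Fermat quintic carries positive-dimensional families of lines, and curves with normal bundle $\O\oplus\O(-2)$ or non-reduced structure do occur --- and rigid Calabi--Yau threefolds admit no deformations at all, so no genericity argument is available there. This is why the word ``virtual'' appears in the name: the claim is only that the \emph{invariants} behave as if the geometry were this clean, and no known technique converts that into an actual statement about curves.

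Your second step, on the other hand, is essentially the paper's own heuristic derivation, stated in the subsection following the conjecture: granting the ansatz, each class $\beta$ contains $\GV_\beta$ isolated $(-1,-1)$ curves (each contributing $1$ by Lemma~\ref{l:4.4}), the degree-$r$ covers of each contribute the series $a(r,q^r)$, $b(r,q^r)$, $c(r,q,\beta_t)$, $d(r,q,\beta_t)$ of Corollary~\ref{c:4.8} and Proposition~\ref{prop_multicover_t}, and summing with the divisor-axiom factors $\beta_j$ reproduces \eqref{e:jtilde}. So you have correctly reconstructed the intended role of the conjecture, but neither your argument nor the paper's constitutes a proof of it; if your goal is the Main Theorem, the honest route is the one the paper takes, in which the Virtual Clemens Conjecture is bypassed entirely.
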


Assume that we are given an ``ideal'' Calabi--Yau threefold $X$ satisfying the above conjecture, by Lemma~\ref{l:4.4}, each isolated $(-1,-1)$-curve (in any degree $\beta$) contributes $1$ to the Gopakumar--Vafa invariants, independently of $\beta$. Therefore, there are $\GV_{0,\beta}$ isolated $(-1,-1)$-curves in degree $\beta$. For each of these isolated curves, quantum $K$-theory allows multiple $r$-covers of the isolated $(-1,-1)$-curve. The coefficients $a(r, q^r)$, $b(r,q^r)$, $c(r,q,t_1)$ and $d(r,q,t_1)$ of the $r$-covers come from the $J$-function of $X_{-1,-1}$. The only addition is the factor of $\beta_j:=\int_{\beta} D_j$ with $D_j = \ch(\Phi^{1j})$, which comes from the divisor axiom.

In summary, the Virtual Clemens Conjecture and the multiple cover formula imply that GV is equivalent to QK for all Calabi--Yau threefolds in genus zero.

\bibliographystyle{alpha}

\bibliography{zbib}
    
\end{document}